\documentclass[11pt,a4paper]{article}
\usepackage{macros}
\usepackage[margin=0.75in]{geometry}

\newif\ifappendix
\appendixfalse 
\ifappendix
    \usepackage{xr}
    \externaldocument{main}
    \title{Electronic Companion: Nash Equilibria, 
    Regularization and Computation in OT-Based DRO}
\else
  \title{Nash Equilibria, 
Regularization and Computation in\\ Optimal Transport-Based Distributionally Robust Optimization}
\fi

\author[$\dagger$]{Soroosh Shafiee}
\author[$\ddagger$]{Liviu Aolaritei}
\author[$\dagger\dagger$]{Florian D\"{o}rfler}
\author[$\ddagger\ddagger$]{Daniel Kuhn}
\affil[$\dagger$]{Operations Research and Information Engineering, Cornell University, Ithaca, USA 
\texttt{shafiee@cornell.edu}}
\affil[$\ddagger$]{Department of Electrical Engineering and Computer Sciences, UC Berkeley, Berkeley, USA 
\texttt{liviu.aolaritei@berkeley.edu}}
\affil[$\dagger\dagger$]{Automatic Control Lab, ETH Z\"urich, Z\"urich, Switzerland \protect\\ \texttt{dorfler@ethz.ch}}
\affil[$\ddagger\ddagger$]{Risk Analytics and Optimization Chair, EPFL Lausanne, Switzerland  \protect\\
\texttt{daniel.kuhn@epfl.ch}}
\date{}

\begin{document}
\maketitle

\ifappendix
    \setcounterref{equation}{eq:robust:portfolio}
    \setcounterref{lemma}{lem:solvability-dual}
    \setcounterref{corollary}{corollary:Wasserstein-p:regularization}
    \setcounterref{assumption}{assumption:derivatives}
    \setcounterref{proposition}{proposition:portfolio}
    \setcounterref{theorem}{theorem:nonconvex:duality}
    \appendix

\section{Computation of Nash Equilibria Revisited}
\label{appendix:nash:ml}

All results of Section~\ref{section:nash:computation} generalize to decision problems where~$Z = (X, Y)$ consists of two random vectors~$X\in\R^{d_x}$ and~$Y\in\R^{d_y}$ with $d_x+d_y=d$ and where the loss function is piecewise concave in~$X$ but may display an arbitrary dependence on~$Y$---provided that the adversary cannot move probability mass along the $Y$-space. Propositions~\ref{proposition:primal} and~\ref{proposition:dual} as well as Theorem~\ref{theorem:nash:computation} immediately carry over to this generalized setting with obvious minor modifications at the expense of a higher notational overhead that could obfuscate the proof ideas. For this reason, said generalizations of Propositions~\ref{proposition:primal} and~\ref{proposition:dual} and of Theorem~\ref{theorem:nash:computation}, which are needed for the numerical experiments in Section~\ref{sec:nash-experiments}, are stated below without proofs. These generalizations rely on the following updated version of Assumption~\ref{assumption:nash:convexity}.

\begin{assumption}[Convexity conditions revisited]~
    \label{assumption:nash:convexity:ml}
    \begin{enumerate}[label=(\roman*)]
        \item \label{assumption:nash:cost:ml} The cost of moving probability mass from $(x,y)$ to $(\hat x, \hat y)$ is given by~$c (x,\hat x)$ if $y = \hat y$ and amounts to~$+\infty$ if $y \neq \hat y$. We also assume that~$c (x,\hat x)$ is lower semicontinuous in $(x,\hat x)$ and convex in~$x$.
        \item \label{assumption:nash:loss:ml} The loss function is representable as a pointwise maximum of finitely many saddle functions, that is, we have $\ell(\theta, x, y) = \max_{i \in [I]} \ell_{i}(\theta, x, y)$ for some $I \in \N$, where $\ell_{i}(\theta, x, y)$ is proper, convex and lower semicontinuous in $\theta$, while $-\ell_{i}(\theta, x, y)$ is proper, convex and lower semicontinuous in~$x$. 
        \item
        \label{assumption:nash:set:X:ml} The support set of the random vector $X$ is representable as $\cX = \{ x \in \R^{d_x} : f_k(x) \leq 0~\forall k \in [K] \}$ for some $K \in \N$, where each function $f_k(x)$ is proper, convex and lower semicontinuous. 
        \item
        \label{assumption:nash:set:Theta:ml}
        The feasible set is representable as $\Theta = \{ \theta \in \R^m: g_l(\theta) \leq 0~\forall l \in [L] \}$ for some $L \in \N$, where each function $g_l(\theta)$ is proper, convex and lower semicontinuous. 
    \end{enumerate}
\end{assumption}

We also maintain Assumption~\ref{assumption:nash:reference}, whereby the reference distribution $\hat \P$ is discrete, and we use~$\hat z_j=(\hat x_j,\hat y_j)$ for $j\in[J]$ to denote its atoms. Using this convention, we update Assumption~\ref{assumption:slater} as follows.

\begin{assumption}[Slater conditions]~
    \label{assumption:slater:ml}
    \begin{enumerate}[label=(\roman*)]
        \item \label{assumption:slater:X:ml} 
        For every $j \in [J]$, $\hat x_j \in \rint(\dom(c(\cdot, \hat x_j)))$ is a Slater point for the support set $\cX$. 
        \item \label{assumption:slater:Theta:ml}
        The feasible set $\Theta$ admits a Slater point.
    \end{enumerate}
\end{assumption}

We are now ready to generalize Proposition~\ref{proposition:primal}.

\begin{proposition}[Primal reformulation revisited]
    \label{prop:primal-reformulation-revisited}
    If Assumptions~\ref{assumption:nash:reference}, \ref{assumption:nash:convexity:ml} and~\ref{assumption:slater:ml}\,\ref{assumption:slater:X:ml} hold and if~$\eps > 0$, then the primal DRO problem~\eqref{eq:dro} has the same infimum as the finite convex program
    \begin{align}
    \label{eq:primal-revisited}
        \begin{array}{cl@{\,}l}
            \inf & \DS \lambda \eps + \sum_{j \in [J]} p_j s_j \\[1.5ex]
            \st & \theta \in \Theta, \, \lambda, \tau_{ijk} \in \R_+, \, s_j \in \R, \, \zeta^{\ell}_{ij}, \zeta^{c}_{ij}, \zeta^{f}_{ijk} \in \R^{d_x} & \forall i \in [I],\, j \in [J], \,k \in [K] \\[2ex]
            & \DS (- \ell_i)^{*2}(\theta, \zeta^{\ell}_{ij}, \hat y_j) + \lambda c^{*1}( \zeta^{c}_{ij} / \lambda, \hat x_j) + \sum_{k \in [K]} \tau_{ijk} f_k^*( \zeta^{f}_{ijk} / \tau_{ijk}) \leq s_j & \forall i \in [I],\, j \in [J] \\[1.5ex]
            & \DS \zeta^{\ell}_{ij} + \zeta^{c}_{ij} + \sum_{k \in [K]} \zeta^{f}_{ijk} = 0 & \forall i \in [I],\, j \in [J],
        \end{array}
    \end{align}
    where $(-\ell_i)^{*2}(\theta,\zeta,y)$ denotes the conjugate of $-\ell_i(\theta, x,y)$ with respect to~$x$ for fixed~$\theta$ and $y$, and where $c^{*1}(\zeta,\hat x)$ denotes the conjugate of $c(x,\hat x)$ with respect to~$x$ for fixed~$\hat x$.
\end{proposition}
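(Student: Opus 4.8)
The plan is to follow the proof of Proposition~\ref{proposition:primal} almost verbatim, inserting the bookkeeping needed to separate the perturbable component~$X$ from the frozen component~$Y$. First I would record that Assumptions~\ref{assumption:nash:reference} and~\ref{assumption:nash:convexity:ml}\,\ref{assumption:nash:cost:ml}--\ref{assumption:nash:loss:ml} strengthen Assumption~\ref{assumption:continuity} in this generalized setting: writing the full transportation cost as $c((x,y),(\hat x,\hat y))=c(x,\hat x)$ when $y=\hat y$ and $+\infty$ otherwise, lower semicontinuity of~$c(x,\hat x)$ in~$(x,\hat x)$ gives joint lower semicontinuity of the full cost, while properness of both~$\ell_i(\theta,\cdot,\cdot)$ and~$-\ell_i(\theta,\cdot,\cdot)$ forces the loss~$\ell(\theta,\cdot,\cdot)=\max_{i\in[I]}\ell_i(\theta,\cdot,\cdot)$ to be finite-valued and therefore $\hat\P$-integrable because~$\hat\P$ is discrete. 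Hence Proposition~\ref{proposition:strong:duality} applies and turns the inner supremum in~\eqref{eq:dro} into the univariate convex program $\inf_{\lambda\geq0,\,s_j}\,\lambda\eps+\sum_{j\in[J]}p_js_j$ subject to $\ell_c(\theta,\lambda,\hat z_j)\leq s_j$ for all~$j\in[J]$, where $\hat z_j=(\hat x_j,\hat y_j)$.

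The decisive step is then to exploit the block structure of~$c$. Since $c((x,y),(\hat x_j,\hat y_j))=+\infty$ whenever $y\neq\hat y_j$, the maximization in the definition of~$\ell_c(\theta,\lambda,\hat z_j)$ effectively ranges only over the slice $\{(x,\hat y_j):x\in\cX\}$, so $\ell_c(\theta,\lambda,\hat z_j)=\sup_{x\in\cX}\ell(\theta,x,\hat y_j)-\lambda c(x,\hat x_j)$. Substituting $\ell=\max_{i\in[I]}\ell_i$ splits each epigraph constraint into the $I$ robust constraints $\sup_{x\in\cX}\ell_i(\theta,x,\hat y_j)-\lambda c(x,\hat x_j)\leq s_j$. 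Each embedded problem maximizes over $x\in\R^{d_x}$ the sum of the concave map $\ell_i(\theta,\cdot,\hat y_j)$, the concave map $-\lambda c(\cdot,\hat x_j)$, and the indicator of $\cX=\{x:f_k(x)\leq0~\forall k\in[K]\}$ with convex $f_k$. Because $\ell_i$ is finite-valued and Assumption~\ref{assumption:slater:ml}\,\ref{assumption:slater:X:ml} provides the Slater point $\hat x_j\in\ri(\dom(c(\cdot,\hat x_j)))\cap\cX$, the explicit convex duality result~\cite[Theorem~2]{zhen2023unified} yields strong duality together with solvability of the dual minimization problem; this recasts each constraint as the existence of $\tau_{ijk}\in\R_+$ and $\zeta^{\ell}_{ij},\zeta^{c}_{ij},\zeta^{f}_{ijk}\in\R^{d_x}$ satisfying $\zeta^{\ell}_{ij}+\zeta^{c}_{ij}+\sum_{k\in[K]}\zeta^{f}_{ijk}=0$ and $(-\ell_i)^{*2}(\theta,\zeta^{\ell}_{ij},\hat y_j)+\lambda c^{*1}(\zeta^{c}_{ij}/\lambda,\hat x_j)+\sum_{k\in[K]}\tau_{ijk}f_k^*(\zeta^{f}_{ijk}/\tau_{ijk})\leq s_j$. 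Here the conjugate of $-\ell_i$ is taken in its $x$-slot with $\theta$ and $y=\hat y_j$ frozen, which is exactly the function $(-\ell_i)^{*2}(\theta,\cdot,\hat y_j)$ of the statement, and $c^{*1}(\cdot,\hat x_j)$ is the conjugate of $c$ in its first slot with $\hat x_j$ frozen.

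Finally I would substitute these dual representations back into the univariate program, adjoin $\theta\in\Theta$ as a decision variable, and delete the now-redundant inner minimization operators to arrive at~\eqref{eq:primal-revisited}. Convexity is immediate: $(-\ell_i)^{*2}(\theta,\cdot,\hat y_j)$ is a conjugate and the maps $\lambda\,c^{*1}(\cdot/\lambda,\hat x_j)$ and $\tau_{ijk}f_k^*(\cdot/\tau_{ijk})$ are perspectives of convex functions, so the left-hand side of the epigraph constraint is jointly convex in $(\theta,\lambda,\zeta^{\ell}_{ij},\zeta^{c}_{ij},\zeta^{f}_{ijk},\tau_{ijk})$ while all remaining constraints are linear. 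The hard part is purely notational, and it is also the reason the generalization can be stated without a full proof: one must carry the frozen argument~$\hat y_j$ correctly through every conjugation and verify that the perspective-function conventions from the Notation paragraph keep~\cite[Theorem~2]{zhen2023unified} valid uniformly in the boundary cases~$\lambda=0$ and~$\tau_{ijk}=0$---exactly the calculation already performed in the proof of Proposition~\ref{proposition:primal}.
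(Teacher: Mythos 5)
Your proposal is correct and follows exactly the route the paper intends: the paper states this generalization without proof, remarking that Proposition~\ref{proposition:primal} ``immediately carries over with obvious minor modifications,'' and your argument is precisely that proof with the required bookkeeping (the $+\infty$ cost collapsing the inner supremum to the slice $y=\hat y_j$, the Slater point $\hat x_j$ from Assumption~\ref{assumption:slater:ml}\,\ref{assumption:slater:X:ml}, and the conjugation in the $x$-slot with $\hat y_j$ frozen) carried out explicitly.
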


Proposition~\ref{prop:dual-reformulation-revisited}  below generalizes Proposition~\ref{proposition:dual} from the main text.

\begin{proposition}[Dual reformulation revisited]
    \label{prop:dual-reformulation-revisited}
    If Assumptions~\ref{assumption:loss}\,\ref{assumption:loss:Fatou}, \ref{assumption:regularity}\,\ref{assumption:regularity:infty}, \ref{assumption:nash:reference}, \ref{assumption:nash:regularity}, \ref{assumption:nash:convexity:ml} and \ref{assumption:slater:ml} hold and~$\eps > 0$, then the dual DRO problem~\eqref{eq:dual:dro} has the same supremum as the finite convex program
    \begin{align}
    \label{eq:dual-revisited}
    \begin{array}{cl@{\,}l}
        \max & \DS - \sum_{i \in [I]} \sum_{j \in [J]} q_{ij} \ell_i^{*1}(\alpha_{ij}, \hat x_j + \xi_{ij} / q_{ij}, \hat y_j) - \sum_{l \in [L]} \nu_l& g^*_l(\beta_l / \nu_l) \\[1ex]
        \st & \DS q_{ij}, \nu_l \in \R_+, ~ \xi_{ij} \in \R^{d_x}, ~ \alpha_{ij}, \beta_l \in \R^m & \forall i \in [I], \, j \in [J], \, l \in [L] \\[1ex]
        & q_{ij} f_k(\hat x_j + {\xi_{ij}}/{q_{ij}}) \leq 0 & \forall i \in [I], \, j \in [J], \, k \in [K] \\[1ex]
        & \DS \sum_{i \in [I]} q_{ij} = p_j & \forall j \in [J] \\[1ex]
        & \DS \sum_{i \in [I]} \sum_{j \in [J]} \alpha_{ij} + \sum_{l \in [L]} \beta_l = 0 \\[1ex]
        &\DS \sum_{i \in [I]} \sum_{j \in [J]} q_{ij} \, c \big( \hat x_j+ {\xi_{ij}}/{q_{ij}}, \hat x_j) \leq \eps,
    \end{array}
    \end{align}
    where $\ell_i^{*1}(\alpha, x, y)$ denotes the conjugate of $\ell_i(\theta, x, y)$ with respect to~$\theta$ for fixed~$x$ and~$y$.
\end{proposition}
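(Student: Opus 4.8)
The plan is to mirror the two-step proof of Proposition~\ref{proposition:dual}, carrying every transportation and conjugation operation out in the $X$-space while treating the labels $\hat y_j$ as frozen parameters. The enabling structural observation is that, since $c((x,y),(\hat x,\hat y))=+\infty$ whenever $y\neq\hat y$, every coupling of finite transportation cost leaves the $Y$-marginal unchanged; hence $\sup_{\Q\in\B_\eps(\hat\P)}\E_{Z\sim\Q}[\ell(\theta,Z)]$ only reallocates mass among points of the form $(x,\hat y_j)$ with $x\in\cX$, and the $c$-transform reduces to $\ell_c(\theta,\lambda,(\hat x_j,\hat y_j))=\sup_{x\in\cX}\ \ell(\theta,x,\hat y_j)-\lambda c(x,\hat x_j)$.

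\textbf{Step~1 (equivalence with the convex program~\eqref{eq:dual-revisited}).} I would first apply Proposition~\ref{prop:primal-reformulation-revisited} with $\Theta$ replaced by the singleton $\{\theta\}$ -- legitimate by Assumptions~\ref{assumption:nash:reference}, \ref{assumption:nash:convexity:ml}, \ref{assumption:slater:ml}\,\ref{assumption:slater:X:ml} and $\eps>0$ -- to write the inner supremum of~\eqref{eq:dro} as a finite convex minimization in variables $\lambda,s_j,\zeta^\ell_{ij},\zeta^c_{ij},\tau_{ijk},\zeta^f_{ijk}$ over $\R^{d_x}$, exactly as in~\eqref{eq:inner:inf} but with $\hat y_j$ a frozen argument of $(-\ell_i)^{*2}$. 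Following~\citep[\textsection~6]{zhen2023unified}, I would take the convex dual of this problem, eliminate the epigraphical variables $s_j$ and apply the linear substitution producing the lifted weights $q_{ij}$ and directions $\xi_{ij}\in\R^{d_x}$; strong duality and solvability of the dual follow from the Slater point with $\xi^s_{ij}=0$ guaranteed by Assumption~\ref{assumption:slater:ml}\,\ref{assumption:slater:X:ml} together with compactness of the dual feasible region \citep[Proposition~20]{zhen2023unified}. Substituting this maximization back into~\eqref{eq:dro} and interchanging $\inf_\theta$ with the $\max$ over $(q,\xi)$ -- permissible by Sion's minimax theorem \citep{sion1958general}, since the objective is convex in $\theta$ and concave in $(q,\xi)$ as a sum of perspectives of the concave maps $\ell_i(\theta,\cdot,\hat y_j)$ over a convex set -- I would dualize the remaining inner minimization over $\theta$ via Assumptions~\ref{assumption:nash:convexity:ml}\,\ref{assumption:nash:loss:ml}, \ref{assumption:slater:ml}\,\ref{assumption:slater:Theta:ml} and~\citep[Theorem~2]{zhen2023unified}, introducing $\alpha_{ij},\beta_l,\nu_l$ and obtaining~\eqref{eq:dual-revisited}. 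This shows that the infimum of~\eqref{eq:dro} equals the attained maximum of~\eqref{eq:dual-revisited}.

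\textbf{Step~2 (construction of the least favorable distribution).} Given a maximizer of~\eqref{eq:dual-revisited}, I would partition $[I]$ for each $j$ into $\cI_j^+=\{i:q^\star_{ij}>0\}$, $\cI_j^0=\{i:q^\star_{ij}=0,\xi^\star_{ij}=0\}$, $\cI_j^\infty=\{i:q^\star_{ij}=0,\xi^\star_{ij}\neq0\}$. If $\cI_j^\infty=\emptyset$ for all $j$, set $\Q^\star=\sum_j\sum_{i\in\cI_j^+}q^\star_{ij}\,\delta_{(\hat x_j+\xi^\star_{ij}/q^\star_{ij},\,\hat y_j)}$, verify it is a probability distribution supported on $\cZ$ (using $q_{ij}f_k(\hat x_j+\xi_{ij}/q_{ij})\le0$ and $\sum_i q_{ij}=p_j$), check $d_c(\Q^\star,\hat\P)\le\eps$ through the $Y$-preserving coupling that moves $q^\star_{ij}$ from $\hat x_j$ to $\hat x_j+\xi^\star_{ij}/q^\star_{ij}$ together with the last constraint of~\eqref{eq:dual-revisited}, and finally run the sandwich estimate of~\eqref{eq:sandwich} verbatim (with $\ell_i$ now carrying the frozen argument $\hat y_j$) to conclude optimality of $\Q^\star$ and strong duality between~\eqref{eq:dro} and~\eqref{eq:dual:dro}. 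If $\cI_j^\infty\neq\emptyset$ for some $j$, I would argue as in~\citep[\textsection~6]{zhen2023unified} that each $\xi^\star_{ij}$ with $i\in\cI_j^\infty$ is a recession direction of $\cX$ and that Assumption~\ref{assumption:nash:regularity} then forces inf-compactness of $\E_\Q[\ell(\theta,Z)]$ in $\theta$ for some $\Q\in\B_\eps(\hat\P)$, and build the asymptotically optimal sequence $\Q(n)$ that spreads the vanishing mass $p_j/n$ along the rays $\hat x_j+n\,\xi^\star_{ij}/p_j$ inside $\cX\times\{\hat y_j\}$, checking feasibility via nonnegativity and convexity of $c$ and optimality via the chain of inequalities in the asymptotic part of the proof of Proposition~\ref{proposition:dual}; the opening equality $\sup_\Q\inf_\theta=\inf_\theta\sup_\Q$ there is supplied by the natural generalization of Theorem~\ref{theorem:minimax}\,\ref{theorem:minimax:minsup}, applicable because $\E_\Q[\ell(\theta,Z)]$ is lower semicontinuous in $\theta$ (Fatou, available by Assumption~\ref{assumption:loss}\,\ref{assumption:loss:Fatou}, plus lower semicontinuity of each $\ell_i$ in $\theta$ from Assumption~\ref{assumption:nash:convexity:ml}\,\ref{assumption:nash:loss:ml}), inf-compact in $\theta$ for some $\Q$ by Assumption~\ref{assumption:nash:regularity}, and bounded above by Assumption~\ref{assumption:regularity}\,\ref{assumption:regularity:infty}.

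\textbf{Main obstacle.} The genuinely new bookkeeping -- and the step I expect to be most delicate -- is confirming that the convex-duality and recession-cone arguments of~\citep{zhen2023unified}, stated there for a single uncertain vector, still go through when $\hat y_j$ is carried along as a frozen parameter: one must check that $\ell_i(\theta,x,\hat y_j)$ and $-\ell_i(\theta,x,\hat y_j)$ remain proper, convex and lower semicontinuous in their respective active variables for each fixed $\hat y_j$ (which is exactly Assumption~\ref{assumption:nash:convexity:ml}\,\ref{assumption:nash:loss:ml}), that the $X$-space Slater point of Assumption~\ref{assumption:slater:ml}\,\ref{assumption:slater:X:ml} suffices for the embedded maximizations even though $Y$ is never perturbed, and that the recession directions arising when $\cI_j^\infty\neq\emptyset$ lie in $\cX$ rather than in the full $\cZ$. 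Everything else is a transcription of the proof of Proposition~\ref{proposition:dual} with $z$ split into $(x,y)$ and the adversary confined to the slices $\R^{d_x}\times\{\hat y_j\}$.
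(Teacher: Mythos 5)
Your proposal is correct and follows exactly the route the paper intends: the paper states this result without proof, remarking that it follows from Proposition~\ref{proposition:dual} ``with obvious minor modifications,'' and your write-up is precisely that adaptation --- the same two-step duality argument with the label $\hat y_j$ frozen, the adversary confined to the slices $\cX\times\{\hat y_j\}$ by the infinite cross-label cost, and the same $\cI_j^+/\cI_j^0/\cI_j^\infty$ case analysis in the construction of $\Q^\star$ and $\Q(n)$.
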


Given Propositions~\ref{prop:primal-reformulation-revisited} and~\ref{prop:dual-reformulation-revisited}, we are finally ready to generalize Theorem~\ref{theorem:nash:computation}.

\begin{theorem}[Computing Nash equilibria revisited]
    \label{theorem:nash:computation:ml}
    If Assumptions~\ref{assumption:loss}\,\ref{assumption:loss:Fatou}, \ref{assumption:regularity}\,\ref{assumption:regularity:infty}, \ref{assumption:nash:reference}, \ref{assumption:nash:regularity2}, \ref{assumption:nash:convexity:ml} and \ref{assumption:slater:ml} hold and $\eps>0$, then the optima of the primal and dual DRO problems~\eqref{eq:dro} and~\eqref{eq:dual:dro} match, and the following~hold.
    \begin{enumerate}[label=(\roman*)]
        \item \label{theorem:nash:computation:theta:ml}
        If $(\theta^\star, \lambda^\star, \{ s_j^\star \}_j, \{ \zeta^{\ell \star}_{ij}, \zeta^{h \star}_{ij} \}_{i,j}, \{ \tau_{ijk}^\star, \zeta^{f \star}_{ijk} \}_{i,j,k} )$ solves~\eqref{eq:primal-revisited}, then~$\theta^\star$ solves the primal DRO problem~\eqref{eq:dro}.
        \item \label{theorem:nash:computation:Q:ml}
        If $(\{\beta_l^\star\}_l, \{\nu_l^\star\}_l, \{q_{ij}^\star\}_{i,j}, \{\xi_{ij}^\star\}_{i,j}, \{\alpha_{ij}^\star\}_{i,j})$ solves~\eqref{eq:dual-revisited} with $\cI_j^\infty=\emptyset$ for every~$j\in[J]$, then the discrete distribution $\Q^\star = \sum_{j \in [J]} \sum_{i \in \cI_j^+} q_{ij}^\star \delta_{(\hat x_j+{\xi_{ij}^\star}/{q_{ij}^\star}, \hat y_j) }$ solves the dual DRO problem~\eqref{eq:dual:dro}, where
        \begin{align*}
            \cI_j^+ = \left\{ i \in [I] : q_{ij}^\star > 0 \right\}
            \quad \text{and} \quad
            \cI_j^\infty = \left\{ i \in [I] : q_{ij}^\star = 0,\, \xi_{ij}^\star \neq 0 \right\}.
        \end{align*}
    \end{enumerate}
\end{theorem}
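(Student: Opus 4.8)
The plan is to obtain Theorem~\ref{theorem:nash:computation:ml} as a direct corollary of the generalized reformulation results in Propositions~\ref{prop:primal-reformulation-revisited} and~\ref{prop:dual-reformulation-revisited}, in exact analogy with the way Theorem~\ref{theorem:nash:computation} was deduced from Propositions~\ref{proposition:primal} and~\ref{proposition:dual}. First I would verify that all hypotheses of the two generalized propositions hold under the stated assumptions; the only subtlety is that Proposition~\ref{prop:dual-reformulation-revisited} is phrased under Assumption~\ref{assumption:nash:regularity}, while Theorem~\ref{theorem:nash:computation:ml} only imposes the weaker Assumption~\ref{assumption:nash:regularity2}. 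As in the discussion following Proposition~\ref{proposition:dual} in the main text, this is harmless: the proof of Proposition~\ref{prop:dual-reformulation-revisited} (a transposition of that of Proposition~\ref{proposition:dual}) invokes Assumption~\ref{assumption:nash:regularity} only through the dichotomy that either $\E_{Z\sim\Q}[\ell(\theta,Z)]$ is inf-compact in~$\theta$ for some $\Q\in\B_\eps(\hat\P)$ or $\cI_j^\infty=\emptyset$ for every $j\in[J]$ at a maximizer of~\eqref{eq:dual-revisited}, and this is precisely Assumption~\ref{assumption:nash:regularity2}. Granting this, Proposition~\ref{prop:primal-reformulation-revisited} identifies the infimum of~\eqref{eq:dro} with the optimal value of~\eqref{eq:primal-revisited}, Proposition~\ref{prop:dual-reformulation-revisited} identifies the supremum of~\eqref{eq:dual:dro} with the optimal value of~\eqref{eq:dual-revisited}, and the analog of Step~1 in the proof of Proposition~\ref{proposition:dual} --- where the primal DRO value is shown to equal the value of the dual convex program --- yields the equality of the primal and dual DRO optimal values claimed in the theorem.

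For assertion~\ref{theorem:nash:computation:theta:ml}, I would take a minimizer $(\theta^\star,\lambda^\star,\dots)$ of~\eqref{eq:primal-revisited} and apply Proposition~\ref{prop:primal-reformulation-revisited} with $\Theta$ replaced by the singleton $\{\theta^\star\}$, which trivially admits a Slater point. This shows that the program obtained from~\eqref{eq:primal-revisited} by fixing $\theta=\theta^\star$ has optimal value $\sup_{\Q\in\B_\eps(\hat\P)}\E_{Z\sim\Q}[\ell(\theta^\star,Z)]$; since $(\lambda^\star,\dots)$ is optimal in that reduced program --- otherwise a cheaper completion would contradict optimality of $(\theta^\star,\lambda^\star,\dots)$ in~\eqref{eq:primal-revisited} --- its objective value $\lambda^\star\eps+\sum_{j\in[J]}p_j s_j^\star$ equals both that supremum and the optimal value of~\eqref{eq:primal-revisited}, hence also $\inf_{\theta\in\Theta}\sup_{\Q\in\B_\eps(\hat\P)}\E_{Z\sim\Q}[\ell(\theta,Z)]$; thus $\theta^\star$ solves the primal DRO problem. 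Assertion~\ref{theorem:nash:computation:Q:ml} is exactly the content of the first half of Step~2 in the proof of Proposition~\ref{proposition:dual}, transposed to the $(x,y)$ setting: when $\cI_j^\infty=\emptyset$ for every $j$, the discrete measure $\Q^\star=\sum_{j\in[J]}\sum_{i\in\cI_j^+}q_{ij}^\star\,\delta_{(\hat x_j+\xi_{ij}^\star/q_{ij}^\star,\,\hat y_j)}$ is a probability distribution on~$\cZ$ with $d_c(\Q^\star,\hat\P)\leq\eps$ --- the coupling transporting mass $q_{ij}^\star$ from $(\hat x_j,\hat y_j)$ to $(\hat x_j+\xi_{ij}^\star/q_{ij}^\star,\hat y_j)$ is admissible and never leaves the fiber $\{y=\hat y_j\}$ --- and the chain~\eqref{eq:sandwich}, rewritten with $x$ and $y$ in place of $z$, collapses to equalities and certifies that $\Q^\star$ attains the supremum in~\eqref{eq:dual:dro}.

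The hard part is not this short deduction but the generalized Propositions~\ref{prop:primal-reformulation-revisited} and~\ref{prop:dual-reformulation-revisited} themselves, which the excerpt states without proof: one must re-run the arguments of Propositions~\ref{proposition:primal} and~\ref{proposition:dual} with every conjugation performed only in the \emph{active} variables $x$ and $\theta$, treating the label $y$ as a frozen parameter, and with all couplings and recession-direction arguments confined to the $X$-fibers $\{y=\hat y_j\}$. The steps that need care are the fiberwise invocation of the explicit convex-duality results of~\citep{zhen2023unified}, the application of Sion's minimax theorem, and the verification that Proposition~\ref{proposition:strong:duality} still applies when $\ell$ depends arbitrarily on $y$ --- which it does, since on a discrete reference distribution every finite-valued integrand is integrable and the cost remains lower semicontinuous in $(x,y)$. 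Once these generalizations are in place, Theorem~\ref{theorem:nash:computation:ml} follows by the argument above with no further work.
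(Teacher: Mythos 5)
Your proposal is correct and follows essentially the same route as the paper, which states Theorem~\ref{theorem:nash:computation:ml} without proof and defers to the arguments of Propositions~\ref{proposition:primal} and~\ref{proposition:dual} transposed to the $(x,y)$ setting, exactly as you describe. Your observation that Assumption~\ref{assumption:nash:regularity2} suffices because the proof of the dual reformulation only uses the dichotomy it encodes matches the discussion following Proposition~\ref{proposition:dual} in the main text.
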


\section{On the Solution of Distributionally Robust Learning Models over Linear Hypotheses with Quadratic Transportation Costs}
\label{section:convexity}

Consider a distributionally robust learning model over linear hypotheses of the form~\eqref{eq:dro:linear}, and assume that the ambiguity set is defined in terms of the transportation cost function $c(z, \hat z) = \| z - \hat z \|^2$, where $\|\cdot\|$ is an arbitrary norm on~$\R^d$. By Proposition~\ref{proposition:strong:duality}, the inner maximization problem in~\eqref{eq:dro:linear} over the probability distributions has the same optimal value as the univariate minimization problem
\begin{align}
    \label{eq:min:lambda}
    \inf_{\lambda \geq 0}\; \lambda \eps + \E_{\hat Z \sim \hat{\P}} \left[ \ell_{c}(\theta, \lambda, \hat Z) \right],
\end{align}
and by Theorem~\ref{theorem:nonconvex:duality}\,\ref{theorem:norm}, the $c$-transform satisfies 
\begin{align}
    \label{eq:Lc:2}
    \ell_c(\theta, \lambda, \hat z) =  \sup_{\gamma \in \R} \; L \big( \inner{\theta}{\hat z} + \gamma \| \theta \|_* \big) - \lambda \hspace{0.1ex} \gamma^2.
\end{align}
In the following we will establish easily checkable sufficient conditions for problem~\eqref{eq:Lc:2} to be convex. To this end, we first recall some key definitions from convex analysis. A differentiable loss function $L$ with derivative $L'$ is called $M$-smooth for some $M\geq 0$ if $| L'(s_1) - L'(s_2)| \leq M | s_1-s_2|$ for all $s_1$ and $s_2$ in the domain of $L$. 
The quadratic growth rate of a loss function $L$ is defined as $G = \inf \{ g \in \R_+: \sup_{s \in \R} L(s) - g s^2 < \infty \}$. The quadratic growth rate~$G$ of an $M$-smooth function satisfies~$G \leq M / 2$.

We first provide easily checkable sufficient conditions for problem~\eqref{eq:Lc:2} to be convex and solvable.

\begin{proposition}[Convexity and solvability of problem~\eqref{eq:Lc:2}]
\label{proposition:gamma}
    If the univariate loss function $L$ is convex, differentiable and $M$-smooth with quadratic growth rate $G$, then the following hold. 
    \begin{enumerate}[label=(\roman*)]
        \item \label{proposition:structural:unbounded}
        If $\lambda < G \| \theta \|_*^2$, then problem~\eqref{eq:Lc:2} is unbounded.
        \item \label{proposition:structural:finite}
        If $\lambda > G \| \theta \|_*^2$, then problem~\eqref{eq:Lc:2} is solvable, and any maximizer~$\gamma^\star$  satisfies $|\gamma^\star| \geq \frac{\| \theta \|_*}{2 \lambda} |L'(\inner{\theta}{\hat z})|$.
        \item \label{proposition:structural:unique}
        If $\lambda > M \| \theta \|_*^2 / 2$, then problem~\eqref{eq:Lc:2} is convex and admits a unique maximizer.
    \end{enumerate}
\end{proposition}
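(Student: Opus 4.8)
The plan is to analyze the objective of~\eqref{eq:Lc:2}, namely $\phi(\gamma) = L(\inner{\theta}{\hat z} + \gamma \|\theta\|_*) - \lambda \gamma^2$, as a univariate function of~$\gamma$, distinguishing the three regimes according to how $\lambda$ compares to multiples of $\|\theta\|_*^2$. First I would dispose of the trivial case $\theta = 0$, where $\phi(\gamma) = L(0) - \lambda\gamma^2$ is a concave quadratic that is maximized (uniquely, for $\lambda > 0$) at $\gamma^\star = 0$, so all three claims hold vacuously; for the remainder I would assume $\theta \neq 0$ and write $a = \|\theta\|_*^2 > 0$ and $s_0 = \inner{\theta}{\hat z}$ for brevity.

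For assertion~\ref{proposition:structural:unbounded}, I would use the definition of the quadratic growth rate~$G$: since $G$ is the infimum of those $g$ for which $\sup_s L(s) - g s^2 < \infty$, for any $g < G$ we have $\sup_s L(s) - g s^2 = +\infty$. Substituting $s = s_0 + \gamma\|\theta\|_*$ and using $L(s) - \lambda\gamma^2 = L(s) - (\lambda/a)(s - s_0)^2$, I would bound $(s-s_0)^2 \le 2 s^2 + 2 s_0^2$ so that $\lambda\gamma^2 \le (2\lambda/a) s^2 + (2\lambda/a) s_0^2$; then $\phi \ge L(s) - (2\lambda/a)s^2 - \text{const}$, and since $2\lambda/a < 2G$ this supremum is $+\infty$ unless one is more careful. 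Actually the cleaner route, which I would prefer, is to use convexity of $L$ directly: $\liminf_{\gamma\to\pm\infty} L(s_0+\gamma\|\theta\|_*)/\gamma^2$ can be related to the quadratic growth rate, and one shows $\phi(\gamma) \to +\infty$ along at least one direction when $\lambda/a < G$ by comparing asymptotic growth rates — this is the step I expect to require the most care, as it hinges on extracting the precise asymptotic constant $G$ from the definition. For assertions~\ref{proposition:structural:finite} and~\ref{proposition:structural:unique}, $M$-smoothness gives the quadratic upper bound $L(s) \le L(s_0) + L'(s_0)(s - s_0) + \tfrac{M}{2}(s-s_0)^2$, hence $\phi(\gamma) \le L(s_0) + L'(s_0)\|\theta\|_*\gamma + (\tfrac{Ma}{2} - \lambda)\gamma^2$; when $\lambda > Ma/2$ the right-hand side is a strictly concave quadratic in $\gamma$, so $\phi$ is bounded above and coercive (tends to $-\infty$), giving solvability by Weierstrass, and uniqueness follows because $\phi$ itself is the difference of the convex function $\gamma \mapsto L(s_0 + \gamma\|\theta\|_*)$ and the strictly convex $\gamma \mapsto \lambda\gamma^2$ with $\lambda > Ma/2 \ge Ga/2$, so that $\phi$ is in fact strictly concave — this last point I would verify by noting that an $M$-smooth convex function has second derivative (where it exists) at most $M$, so $\gamma\mapsto L(s_0+\gamma\|\theta\|_*) - \tfrac{Ma}{2}\gamma^2$ is concave, whence $\phi$ is this concave function minus $(\lambda - \tfrac{Ma}{2})\gamma^2$ with positive coefficient, hence strictly concave, which also re-establishes convexity of problem~\eqref{eq:Lc:2}.

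It remains to handle the regime $Ga^2 < \lambda \le Ma/2$ claimed in~\ref{proposition:structural:finite} but not covered by the strict-concavity argument, and to prove the lower bound $|\gamma^\star| \ge \tfrac{\|\theta\|_*}{2\lambda}|L'(s_0)|$. For solvability when $\lambda > Ga$, I would argue that $\phi(\gamma) \le \big(\sup_s L(s) - g s^2\big) + g(s_0 + \gamma\|\theta\|_*)^2 - \lambda\gamma^2$ for any $g \in (G, \lambda/a)$, and since $g s^2 - (\lambda/a) s^2$ type terms dominate, $\phi$ is coercive and upper semicontinuous, hence attains its maximum. For the quantitative bound on $\gamma^\star$, I would invoke the first-order optimality condition $0 \in \partial\phi(\gamma^\star)$, i.e. $L'(s_0 + \gamma^\star\|\theta\|_*)\|\theta\|_* = 2\lambda\gamma^\star$; then monotonicity of $L'$ (from convexity) compared against its value at $s_0$ — using that $\gamma^\star$ and $L'(s_0 + \gamma^\star\|\theta\|_*) - L'(s_0)$ have the same sign, so $|2\lambda\gamma^\star| = |L'(s_0+\gamma^\star\|\theta\|_*)|\|\theta\|_* \ge |L'(s_0)|\|\theta\|_*$ when the two derivative values have the same sign, or a short case analysis otherwise — yields $|\gamma^\star| \ge \tfrac{\|\theta\|_*}{2\lambda}|L'(s_0)|$. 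The main obstacle throughout is the first one: getting the unboundedness in~\ref{proposition:structural:unbounded} to match exactly the threshold $\lambda < G\|\theta\|_*^2$ rather than a weaker threshold, which forces one to work with the asymptotic (rather than global) quadratic growth of $L$ and to use convexity to promote a limsup statement about growth into an honest $+\infty$ for the supremum.
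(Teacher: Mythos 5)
Your overall strategy coincides with the paper's: analyze the univariate objective $\phi(\gamma)=L(s_0+\gamma\|\theta\|_*)-\lambda\gamma^2$ with $s_0=\inner{\theta}{\hat z}$, derive unboundedness and coercivity from the quadratic growth rate, obtain the bound on $\gamma^\star$ from optimality and monotonicity of $L'$, and get uniqueness in assertion~\ref{proposition:structural:unique} from strong concavity (your argument for~\ref{proposition:structural:unique}, and for solvability in~\ref{proposition:structural:finite}, is sound). However, the two steps you yourself flag as delicate are precisely where the proposal stops, and in both places the missing content is the substance of the proof.

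For assertion~\ref{proposition:structural:unbounded}, your bound $(s-s_0)^2\le 2s^2+2s_0^2$ only yields unboundedness when $2\lambda<G\|\theta\|_*^2$, a factor of two off the claimed threshold, and your fallback is a promissory note. Note also that the relevant asymptotic quantity is a $\limsup$, not a $\liminf$: the definition of $G$ as an infimum gives, for each $g<G$, only a sequence $s_n$ with $L(s_n)-gs_n^2\to+\infty$ (and hence $|s_n|\to\infty$). The gap is closed by interpolating: pick $g$ with $\lambda/\|\theta\|_*^2<g<G$ and observe that $gs_n^2-(\lambda/\|\theta\|_*^2)(s_n-s_0)^2=(g-\lambda/\|\theta\|_*^2)s_n^2+O(|s_n|)\to+\infty$, so the cross term is genuinely lower order and no factor is lost. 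The paper implements exactly this via the decomposition $L(s)-g(s-s_0)^2=[L(s)-(g+\delta)s^2]+[\delta s^2+g(s^2-(s-s_0)^2)]$ with $g+\delta<G$ and $\delta>0$: the first bracket has supremum $+\infty$ and the second is a convex quadratic bounded below.

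For the lower bound on $|\gamma^\star|$ in assertion~\ref{proposition:structural:finite}, stationarity plus monotonicity of $L'$ is not enough. If, say, $L'(s_0)>0$ but $\gamma^\star<0$, then $L'(s_0+\gamma^\star\|\theta\|_*)<0$, and the identity $2\lambda|\gamma^\star|=\|\theta\|_*\,|L'(s_0+\gamma^\star\|\theta\|_*)|$ gives nothing, because monotonicity only bounds that derivative from \emph{above} by $L'(s_0)$; its absolute value need not dominate $|L'(s_0)|$ on the basis of the first-order condition alone. This opposite-sign case, which you defer to "a short case analysis", is the crux and requires using \emph{global} optimality of $\gamma^\star$. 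The paper handles it by comparing values: an integral estimate combining monotonicity of $L'$ with stationarity gives $\phi(\gamma^\star)\le L(s_0)+\lambda(\gamma^\star)^2$, while the gradient inequality for convex $L$ gives $\phi(\gamma^\star)\ge L(s_0)+\frac{1}{4\lambda}\bigl(L'(s_0)\|\theta\|_*\bigr)^2$; combining the two yields the claimed bound. (Comparing $\phi(\gamma^\star)$ with $\phi(-\gamma^\star)$ and applying the gradient inequality twice would also work, but some appeal to optimality beyond stationarity is unavoidable.) As written, the proposal is a plan rather than a proof of parts~\ref{proposition:structural:unbounded} and~\ref{proposition:structural:finite}.
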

\begin{proof}
    As for assertion~\ref{proposition:structural:unbounded}, assume that $\lambda<G\|\theta\|^2_*$. Thus, there exist two strictly positive constants $g$ and $\delta$ such that $\lambda<g\|\theta\|^2_*$ and $g+\delta<G$. The definition of the $c$-transform in~\eqref{eq:Lc:2} then implies that
    \begin{align*}
        \ell_c(\theta, \lambda,\hat z)&= \sup_{s\in\R} L(s)-\lambda\left(\frac{s-\inner{\theta}{\hat z}}{\|\theta\|_*}\right)^2\geq \sup_{s\in\R} L(s)-g (s-\inner{\theta}{\hat z})^2\\
        & = \sup_{s\in\R} L(s)-(g+\delta)s^2 + g \left(s^2-(s-\inner{\theta}{\hat z})^2\right)+\delta s^2\\
        &\geq \sup_{s\in\R} L(s)-(g+\delta)s^2 + \inf_{s\in\R} \delta s^2 - g \inner{\theta}{\hat z}^2 + 2 g s \inner{\theta}{\hat z}  = +\infty,
    \end{align*}
    where the first equality exploits the variable substitution $s\gets \inner{\theta}{\hat z} + \gamma \| \theta \|_*$, and the first inequality holds because $\lambda<g\|\theta\|^2_*$. The second equality as well as the second inequality are elementary. The last equality, finally, holds because the supremum in the last line of the above expression evaluates to $+\infty$ (as $g+\delta< G$) and because the infimum is finite (as $\delta>0$). Thus, the claim follows.
    
    As for assertion~\ref{proposition:structural:finite}, assume that $\lambda>G\|\theta\|^2_*$. Thus, there exist two strictly positive constants $g$ and $\delta$ such that $\lambda>g\|\theta\|^2_*$ and $g-\delta>G$. The definition of the $c$-transform in~\eqref{eq:Lc:2} then implies that
        \begin{align*}
        \ell_c(\theta, \lambda,\hat z)
        & \leq \sup_{s\in\R} L(s)-(g-\delta)s^2 + g \left(s^2-(s-\inner{\theta}{\hat z})^2\right)-\delta s^2\\
        &\leq \sup_{s\in\R} L(s)-(g-\delta)s^2 + \sup_{s\in\R} -\delta s^2 - g \inner{\theta}{\hat z}^2 + 2 g s \inner{\theta}{\hat z}  < +\infty,
    \end{align*}
    where the first inequality follows from our assumption that $\lambda>g\|\theta\|^2_*$ and from elementary rearrangements familiar from the proof of assertion~\ref{proposition:structural:unbounded}, while the second inqueality exploits the sub-additivity of the supremum. The two suprema in the last line, finally, are both finite because $g-\delta> G$ and~$\delta>0$, respectively. Thus, $\ell_c(\theta, \lambda,\hat z)$ is indeed finite. By slightly adapting the above arguments, one can further show that the objective function of the maximization problem in~\eqref{eq:Lc:2} is continuous and bounded above by a function with compact superlevel sets, which implies that this problem admits indeed a maximizer~$\gamma^\star$. Details are omitted for brevity. The stipulated lower bound on~$|\gamma^\star|$ is trivial if $L'(\inner{\theta}{\hat z}) = 0$. From now on we may thus assume without loss of generality that $L'(\inner{\theta}{\hat z})>0$ (the case $L'(\inner{\theta}{\hat z})<0$ can be handled similarly). Clearly, $\gamma^\star$ must satisfy the first-order optimality condition
    \begin{align}
        \label{eq:optimality:gamma}
        L'(\inner{\theta}{\hat z} + \gamma^\star \| \theta \|_*)\| \theta \|_*  - 2 \lambda \gamma^\star = 0.
    \end{align}
    In the following we distinguish two cases depending on sign of $\gamma^\star$. First, if $\gamma^\star \geq 0$, then we have
    \begin{align*}
        |\gamma^\star | =\gamma^\star = \textstyle \frac{\| \theta \|_*}{2\lambda}L'(\inner{\theta}{\hat z} + \gamma^\star \| \theta \|_*) \geq \frac{\| \theta \|_*}{2\lambda} L'(\inner{\theta}{\hat z}),
    \end{align*}
     where the second equality follows from~\eqref{eq:optimality:gamma}, and the inequality exploits the convexity of~$L$, which ensures that $L'$ is non-decreasing. If~$\gamma^\star < 0$, on the other hand, then we have
     \begin{align*}
        \ell_c(\theta, \lambda , z)
        &= L \big( \inner{\theta}{\hat z} + \gamma^\star \| \theta \|_* \big) - \lambda \hspace{0.1ex} (\gamma^\star)^2\\
        &= L(\inner{\theta}{\hat z}) + \int_0^{\gamma^\star} \left[L'(\inner{\theta}{\hat z} + \gamma \| \theta \|_*) \| \theta \|_*- 2 \lambda \gamma\right] \mathrm{d}\gamma \\
        &\leq L(\inner{\theta}{\hat z}) + \int_0^{\gamma^\star} \left[L'(\inner{\theta}{\hat z} + \gamma^\star \| \theta \|_*) \| \theta \|_*- 2 \lambda \gamma\right] \mathrm{d}\gamma \\
        &= L(\inner{\theta}{\hat z}) + \int_0^{\gamma^\star} \left[ 2 \lambda \gamma^\star - 2 \lambda \gamma \right] \mathrm{d}\gamma
        = L(\inner{\theta}{\hat z}) + \lambda (\gamma^\star)^2,
    \end{align*}
    where the first two equalities follow from the optimality of~$\gamma^\star$ and the fundamental theorem of calculus, respectively, the inequality holds due to the monotonicity of $L'$ and the assumption that $\gamma^\star<0$, and the third equality follows from~\eqref{eq:optimality:gamma}. By the first-order convexity condition of~$L$, we further obtain
    \begin{align*}
        \ell_c(\theta, \lambda , \hat z) \geq \sup_{\gamma \in \R} ~ L(\inner{\theta}{\hat z}) + L'(\inner{\theta}{\hat z}) \gamma \| \theta \|_* - \lambda \gamma^2
        = \textstyle L(\inner{\theta}{\hat z}) + \frac{1}{4 \lambda} [L'(\inner{\theta}{\hat z}) \| \theta \|_*]^2.
    \end{align*}
    Combining the upper and lower bounds on~$\ell_c(\theta, \lambda , \hat z)$ derived above then yields
    \begin{align*}
        L(\inner{\theta}{\hat z}) +\lambda (\gamma^\star)^2 \geq \ell_c(\theta, \lambda , \hat z) \geq \textstyle L(\inner{\theta}{\hat z}) + \frac{1}{4 \lambda} [L'(\inner{\theta}{\hat z}) \| \theta \|_*]^2 \implies | \gamma^\star | \geq \frac{\| \theta \|_*}{2 \lambda} |L'(\inner{\theta}{\hat z})|.
    \end{align*}
    This completes the proof of assertion~\ref{proposition:structural:finite}.
    
    As for assertion~\ref{proposition:structural:unique}, assume that $\lambda > M \| \theta \|_*^2 / 2$. Since $L$ is $M$-smooth by assumption, one readily verifies that the objective function $L ( \inner{\theta}{\hat z} + \gamma \| \theta \|_* ) - \lambda \gamma^2$ of problem~\eqref{eq:Lc:2} is strongly concave in $\gamma$ for any $\lambda > M \| \theta \|_*^2 / 2$. Therefore, problem~\eqref{eq:Lc:2} must have a unique maximizer.
\end{proof}

The next proposition provides a priori bounds on the minimizers of problem~\eqref{eq:min:lambda}.

\begin{proposition}[Bounds on the minimizers of~\eqref{eq:min:lambda}]
    \label{proposition:lambda}
    If $L$ is convex, upper semicontinuous, differentiable and $M$-smooth with quadratic growth rate $G$, $L(\inner{\theta}{\hat Z})$ is $\hat{\P}$-integrable, $c(z, \hat z) = \| z - \hat z \|^2$ for some norm $\|\cdot\|$, and $\eps > 0$, then~\eqref{eq:min:lambda} has a minimizer 
    \[
        \lambda^\star \in \Big[ \sqrt{\E_{\hat Z \sim \hat{\P}} \big[ |L'(\inner{\theta}{\hat Z})|^2 \big] \| \theta \|_*^2 / (4 \eps)}, ~ \sqrt{\E_{\hat Z \sim \hat{\P}} \big[ |L'(\inner{\theta}{\hat Z})|^2 \big] \| \theta \|_*^2 / \eps} + M \| \theta \|_*^2 / 2 \Big].
    \]
\end{proposition}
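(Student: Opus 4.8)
The plan is to analyze the scalar function $F(\lambda)=\lambda\eps+\E_{\hat Z\sim\hat\P}[\ell_c(\theta,\lambda,\hat Z)]$ on $[0,\infty)$ — whose infimum is the optimal value of~\eqref{eq:min:lambda} — and to locate its minimizers by feeding the pointwise information on the inner maximizers of~\eqref{eq:Lc:2} supplied by Proposition~\ref{proposition:gamma} into two one‑sided monotonicity estimates for $F$. First I would record the elementary facts: taking $\gamma=0$ in~\eqref{eq:Lc:2} gives $\ell_c(\theta,\lambda,\hat z)\ge L(\inner{\theta}{\hat z})$, so $F(\lambda)\ge\lambda\eps+\E_{\hat Z\sim\hat\P}[L(\inner{\theta}{\hat Z})]$, which is finite, hence $F$ is bounded below and coercive; moreover $\lambda\mapsto\ell_c(\theta,\lambda,\hat z)$ is a pointwise supremum of affine maps, so it is convex and lower semicontinuous, and Fatou's lemma (the envelope family being dominated below by the integrable $L(\inner{\theta}{\hat Z})$) makes $F$ convex and lower semicontinuous. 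By Proposition~\ref{proposition:gamma}\,\ref{proposition:structural:unbounded} we have $F\equiv+\infty$ on $[0,G\|\theta\|_*^2)$, while maximizing the quadratic upper bound $L(s)\le L(\inner{\theta}{\hat z})+L'(\inner{\theta}{\hat z})(s-\inner{\theta}{\hat z})+\frac{M}{2}(s-\inner{\theta}{\hat z})^2$ over $s$ (after the substitution $s=\inner{\theta}{\hat z}+\gamma\|\theta\|_*$) gives $\ell_c(\theta,\lambda,\hat z)\le L(\inner{\theta}{\hat z})+\|\theta\|_*^2|L'(\inner{\theta}{\hat z})|^2/(4\lambda-2M\|\theta\|_*^2)$ for $\lambda>M\|\theta\|_*^2/2$, whose right‑hand side is $\hat\P$‑integrable provided $\E_{\hat Z\sim\hat\P}[|L'(\inner{\theta}{\hat Z})|^2]<\infty$ — the latter is implicit in the statement and is guaranteed, e.g., by Assumption~\ref{assumption:cost}\,\ref{assumption:cost:integrable} (finite second moment of $\hat\P$) together with the linear growth $|L'(s)|\le|L'(0)|+M|s|$ forced by $M$‑smoothness. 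Hence $F$ is proper, and Weierstrass' theorem produces a minimizer $\lambda^\star$ with $\lambda^\star\ge G\|\theta\|_*^2\ge0$. A second crucial observation is that whenever $F(\lambda)<\infty$ the bound $L(\inner{\theta}{\hat z})\le\ell_c(\theta,\lambda,\hat z)$ forces $\ell_c(\theta,\lambda,\cdot)$ to be $\hat\P$‑integrable; combined with the monotonicity $\ell_c(\theta,\lambda_2,\hat z)\le\ell_c(\theta,\lambda_1,\hat z)$ for $\lambda_1\le\lambda_2$, this will let me take expectations freely below without any uniform‑domination argument.

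Next I would establish the comparison estimate: for $\lambda_1<\lambda_2$ with $\lambda_2>G\|\theta\|_*^2$, letting $\gamma^\star(\lambda_2,\hat z)$ be a maximizer of~\eqref{eq:Lc:2} at $\lambda_2$ (it exists by Proposition~\ref{proposition:gamma}\,\ref{proposition:structural:finite}) and inserting it into the problem at $\lambda_1$ yields
\[
\ell_c(\theta,\lambda_2,\hat z)-\ell_c(\theta,\lambda_1,\hat z)\;\le\;-(\lambda_2-\lambda_1)\,\gamma^\star(\lambda_2,\hat z)^2\;\le\;0 ,
\]
and, if additionally $\lambda_1>G\|\theta\|_*^2$, inserting a maximizer $\gamma^\star(\lambda_1,\hat z)$ into the problem at $\lambda_2$ gives the reverse bound $\ell_c(\theta,\lambda_2,\hat z)-\ell_c(\theta,\lambda_1,\hat z)\ge-(\lambda_2-\lambda_1)\gamma^\star(\lambda_1,\hat z)^2$. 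Integrating over $\hat Z$ (legitimate by the integrability remark) sandwiches the difference quotient of $F$ between $\eps-\E_{\hat Z\sim\hat\P}[\gamma^\star(\lambda_1,\hat Z)^2]$ and $\eps-\E_{\hat Z\sim\hat\P}[\gamma^\star(\lambda_2,\hat Z)^2]$. For the \emph{lower bound}, suppose $\lambda^\star<\underline\Lambda:=(\|\theta\|_*^2\,\E_{\hat Z\sim\hat\P}[|L'(\inner{\theta}{\hat Z})|^2]/(4\eps))^{1/2}$; pick $\lambda_2\in(\lambda^\star,\underline\Lambda)$ (note $\lambda_2>\lambda^\star\ge G\|\theta\|_*^2$, so the upper half of the estimate applies with $\lambda_1=\lambda^\star$), and combine it with the a priori bound $\gamma^\star(\lambda_2,\hat z)^2\ge\|\theta\|_*^2|L'(\inner{\theta}{\hat z})|^2/(4\lambda_2^2)$ from Proposition~\ref{proposition:gamma}\,\ref{proposition:structural:finite} to obtain $F(\lambda_2)-F(\lambda^\star)\le(\lambda_2-\lambda^\star)\bigl(\eps-\|\theta\|_*^2\E_{\hat Z\sim\hat\P}[|L'(\inner{\theta}{\hat Z})|^2]/(4\lambda_2^2)\bigr)<0$, contradicting optimality of $\lambda^\star$. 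Hence $\lambda^\star\ge\underline\Lambda$.

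For the \emph{upper bound}, the first‑order condition~\eqref{eq:optimality:gamma} together with $M$‑smoothness yields $|\gamma^\star(\lambda,\hat z)|\le\|\theta\|_*|L'(\inner{\theta}{\hat z})|/(2\lambda-M\|\theta\|_*^2)$ for every $\lambda>M\|\theta\|_*^2/2$, hence $\E_{\hat Z\sim\hat\P}[\gamma^\star(\lambda,\hat Z)^2]\le\|\theta\|_*^2\E_{\hat Z\sim\hat\P}[|L'(\inner{\theta}{\hat Z})|^2]/(2\lambda-M\|\theta\|_*^2)^2$. Setting $\overline\Lambda:=M\|\theta\|_*^2/2+(\|\theta\|_*^2\E_{\hat Z\sim\hat\P}[|L'(\inner{\theta}{\hat Z})|^2]/\eps)^{1/2}$ one has $2\overline\Lambda-M\|\theta\|_*^2=2(\|\theta\|_*^2\E_{\hat Z\sim\hat\P}[|L'(\inner{\theta}{\hat Z})|^2]/\eps)^{1/2}$, so this upper bound equals $\eps/4<\eps$ at $\lambda=\overline\Lambda$ and stays strictly below $\eps$ for all $\lambda\ge\overline\Lambda$; plugging it into the \emph{lower} half of the comparison estimate (where both $\ell_c(\theta,\lambda_i,\cdot)$ are integrable because $\lambda_i>M\|\theta\|_*^2/2$) shows that $F$ is strictly increasing on $[\overline\Lambda,\infty)$, whence $\lambda^\star\le\overline\Lambda$. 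Together with the preceding paragraph this gives $\underline\Lambda\le\lambda^\star\le\overline\Lambda$, i.e. the claimed bracket.

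Finally, the main obstacle I anticipate is not the arithmetic but the measure‑theoretic bookkeeping around $F$: showing that $F$ is proper and lower semicontinuous, and — above all — justifying every interchange of expectation with the pointwise comparison inequalities across the two regimes $G\|\theta\|_*^2<\lambda\le M\|\theta\|_*^2/2$ (where the maximizer of~\eqref{eq:Lc:2} need be neither unique nor a priori bounded in $\hat z$) and $\lambda>M\|\theta\|_*^2/2$ (where Proposition~\ref{proposition:gamma}\,\ref{proposition:structural:unique} gives both). The device that makes this manageable is precisely the sandwich $L(\inner{\theta}{\hat z})\le\ell_c(\theta,\lambda,\hat z)\le\ell_c(\theta,\lambda',\hat z)$ for $\lambda\ge\lambda'$: finiteness of $F$ at a single point upgrades to integrability of the whole envelope family above that point, so that $\ell_c(\theta,\lambda_2,\cdot)-\ell_c(\theta,\lambda_1,\cdot)$ is automatically integrable and can be bounded in expectation by the one‑sided estimates. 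A subsidiary point to dispatch is the boundary value $\lambda^\star=G\|\theta\|_*^2$, but the lower‑bound argument above only ever uses $\lambda_2>G\|\theta\|_*^2$ (never $\lambda_1>G\|\theta\|_*^2$), so this case needs no separate treatment.
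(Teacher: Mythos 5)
Your argument is correct and produces exactly the paper's two constants, but it reaches them by a genuinely different route. For the lower bound the paper differentiates: it imposes the first-order condition $\partial_\lambda f((\lambda^\star)^+)\ge 0$, interchanges the right derivative with the expectation via \citep[Proposition~2.1]{bertsekas1973stochastic}, and evaluates $\partial_\lambda\ell_c$ with the Milgrom--Segal envelope theorem, which in turn obliges it to prove compactness of the argmax set $\Gamma^\star(\lambda^\star,\hat z)$. You replace all of that machinery with a finite-difference argument: inserting the maximizer at $\lambda_2$ suboptimally into the problem at $\lambda^\star$ gives $\ell_c(\theta,\lambda_2,\hat z)-\ell_c(\theta,\lambda^\star,\hat z)\le-(\lambda_2-\lambda^\star)\gamma^\star(\lambda_2,\hat z)^2$, and the a priori bound of Proposition~\ref{proposition:gamma}\,\ref{proposition:structural:finite} turns this into $F(\lambda_2)<F(\lambda^\star)$ whenever $\lambda^\star<\lambda_2<\underline\Lambda$, a contradiction. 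This is more elementary, avoids differentiation under the integral and the compactness argument altogether (and, if you compose the pointwise bounds before integrating, even the measurable selection of $\gamma^\star(\lambda_2,\cdot)$), and disposes of the boundary case $\lambda^\star=G\|\theta\|_*^2$ for free, as you observe. For the upper bound the paper simply sandwiches the optimal value, $\lambda^\star\eps+\E_{\hat Z\sim\hat\P}[L(\inner{\theta}{\hat Z})]\le F(\lambda^\star)\le\inf_{\lambda>M\|\theta\|_*^2/2}\{\cdots\}$, and solves the right-hand infimum in closed form; you instead extract the estimate $|\gamma^\star|\le\|\theta\|_*\,|L'(\inner{\theta}{\hat z})|/(2\lambda-M\|\theta\|_*^2)$ from the stationarity condition~\eqref{eq:optimality:gamma} and $M$-smoothness (correct, though not recorded in Proposition~\ref{proposition:gamma}) and show that $F$ is strictly increasing on $[\overline\Lambda,\infty)$. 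The paper's version is arithmetically shorter; yours is symmetric with your lower-bound argument and keeps the entire proof at the level of one-sided difference quotients. Two caveats that apply equally to the paper's proof: the bracket is only meaningful when $\E_{\hat Z\sim\hat\P}[|L'(\inner{\theta}{\hat Z})|^2]<\infty$, which you rightly flag as an implicit hypothesis, and the degenerate case $\theta=0$ (where $2\overline\Lambda-M\|\theta\|_*^2=0$) should be dispatched separately by noting that $F(\lambda)=\lambda\eps+L(0)$ is then minimized at $\lambda^\star=0$.
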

\begin{proof}
    Let $\lambda^\star$ be any minimizer of problem~\eqref{eq:min:lambda}, which is guaranteed to exist for the following reasons. First, $\ell_c(\theta, \lambda,\hat Z)$ is convex and lower semicontinuous in $\lambda$. In addition, lower semi-continuity is preserved by taking expectations with respect to $\hat{\P}$ thanks to Fatou's lemma, which can be applied since $\ell_c(\theta, \lambda,\hat Z)$ is bounded below by the $\hat{\P}$-integrable random variable $L(\inner{\theta}{\hat Z})$. Finally, the objective function of problem~\eqref{eq:min:lambda} is bounded below by $\lambda \eps + \E_{\hat Z \sim \hat{\P}} [L(\inner{\theta}{\hat Z}]$, which grows linearly with~$\lambda$. Thus, the minimum of problem~\eqref{eq:min:lambda} must be attained at some point $\lambda^\star\in [0,+\infty)$. Consequently, we have
    \begin{align*}
        \lambda^\star \eps + \E_{\hat Z \sim \hat{\P}} [L(\inner{\theta}{\hat Z}] \leq \lambda^\star \eps + \E_{\hat Z \sim \hat{\P}} [\ell_c(\theta, \lambda^\star , \hat Z)] = \inf_{\lambda \geq 0} \lambda \eps + \E_{\hat Z \sim \hat{\P}} [\ell_c(\theta, \lambda , \hat Z)].
    \end{align*}
    By rearranging the above inequality and recalling the definition of~$\ell_c$, we then obtain
    \begin{align*}
        \lambda^\star \eps 
        &\leq \inf_{\lambda \geq 0} \lambda \eps + \E_{\hat Z \sim \hat{\P}} \left[ \sup_{\gamma \in \R}  ~ L(\inner{\theta}{\hat Z} + \gamma \| \theta \|_*) - L(\inner{\theta}{\hat Z}) - \lambda \gamma^2 \right] \\
        &\leq \inf_{\lambda \geq 0} \lambda \eps + \E_{\hat Z \sim \hat{\P}} \left[ \sup_{\gamma \in \R} ~  \frac{M}{2} \gamma^2 \| \theta \|_*^2 - \gamma \| \theta \|_* L'(\inner{\theta}{\hat Z}) - \lambda \gamma^2 \right] \\
        &\leq \inf_{\lambda > {M \| \theta \|_*^2}/{2}} \lambda \eps + \frac{\| \theta \|_*^2}{4 \lambda - 2 M \| \theta \|_*^2} \E_{\hat Z \sim \hat{\P}} \left[ |L'(\inner{\theta}{\hat Z})|^2 \right] \\
        &= \frac{\eps M \| \theta \|_*^2}{2} + \sqrt{\eps} \| \theta \|_* \E_{\hat Z \sim \hat{\P}} \left[ |L'(\inner{\theta}{\hat Z})|^2 \right]^{\frac{1}{2}},
    \end{align*}
    where the second inequality exploits Taylor's theorem and the $M$-smoothness of $L$, the third inequality holds because we restrict the feasible set of a minimization problem and because the inner maximization problem is solved by $\gamma^\star = -L'(\inner{\theta}{\hat Z) \|\theta\|_* / (2 \lambda - M \| \theta \|_*^2)}$ whenever $\lambda > M \| \theta \|_*^2 / 2$, and the last equality follows from the analytical solution of the minimization problem over~$\lambda$. The desired upper bound on~$\lambda^\star$ is then obtained by dividing both sides of the above inequality by~$\eps$.

    Note that the desired lower bound on $\lambda^\star$ trivially holds if~$\theta=0$. From now on we may thus assume without loss of generality that $\theta\neq 0$. From Proposition~\ref{proposition:gamma} we know that $\lambda^\star \geq G \|\theta\|_*^2$. Below we assume without much loss of generality that $\lambda^\star > G \|\theta\|_*^2$. Indeed, the case $\lambda^\star = G \|\theta\|_*^2$ can be handled similarly to Case~$2$ in the proof of \cite[Lemma~4]{blanchet2018optimal}. In addition, we temporarily denote the objective function of problem~\eqref{eq:min:lambda} by~$f(\lambda)$. The first-order optimality condition of problem~\eqref{eq:min:lambda} then implies that the right derivative of~$f$ at~$\lambda^\star$, which we denote by $\partial_\lambda f ((\lambda^\star)^+)$, must be non-negative, that is,
    \begin{align}
    \label{eq:right-derivative}
        0\leq \partial_\lambda f ((\lambda^\star)^+) = \eps + \E_{\hat Z \sim \hat{\P}} \left[ \partial_\lambda \ell_c(\theta,(\lambda^\star)^+,\hat Z) \right],
    \end{align}
    where the right derivative and the expectation may be interchanged thanks to~\cite[Proposition~2.1]{bertsekas1973stochastic}. Next, we can use the envelope theorem~\citep[Corollary~4]{milgrom2002envelope} to demonstrate that
    \begin{align}
    \label{eq:env:thm}
        \partial_\lambda \ell_c(\theta,(\lambda^\star)^+,\hat z) = \max_{\gamma^\star \in \Gamma^\star(\lambda^\star,\hat z)} -(\gamma^\star)^2
    \end{align}
    with $\Gamma^\star(\lambda^\star, \hat z) = \argmax \{ L(\inner{\theta}{\hat z} + \gamma \| \theta \|_*) - \lambda^\star \gamma^2 : \gamma \in \R \}$. The envelope theorem applies because $L \big( \inner{\theta}{\hat z} + \gamma \| \theta \|_* \big) - \lambda \hspace{0.1ex} \gamma^2$ is upper semicontinuous in $\gamma$, $\partial_\lambda (L \big( \inner{\theta}{\hat z} + \gamma \| \theta \|_* \big) - \lambda \hspace{0.1ex} \gamma^2) = - \gamma^2$ is trivially continuous in~$\gamma$ and~$\lambda$, and $\Gamma^\star(\lambda^\star, \hat z)$ is compact. To see that~$\Gamma^\star(\lambda^\star, \hat z)$ is compact, we need to show that it is closed as well as bounded. Closedness follows immediately from the upper semi-continuity of~$L$. To show boundedness, select any~$\gamma^\star \in \Gamma^\star(\lambda^\star, \hat z)$. Since $L$ has quadratic growth rate~$G$ and since $\lambda^\star > G \|\theta\|_*^2$ by assumption, there exist $\delta>0$ and $C>0$ such that $L(u)\leq (G+\delta)u^2 + C$ and $\lambda^\star \geq (G+2 \delta)\|\theta\|_*^2$. In addition, since $L$ is convex, we have $L(\inner{\theta}{\hat z}) \geq L(0) + L'(0)\inner{\theta}{\hat z}$. Substituting these three estimates into the inequality $\ell_c(\theta,\lambda^\star,\hat z)= L(\inner{\theta}{\hat z}+\gamma^\star\|\theta\|_*)-\lambda(\gamma^\star)^2\geq L(\inner{\theta}{\hat z})$, we obtain
    \begin{align*}
        ( G+\delta)(\inner{\theta}{\hat z} + \gamma^\star \|\theta\|_*)^2 + C - (G + 2\delta)(\gamma^\star)^2 \|\theta\|_*^2 \geq L(0) + L'(0)\inner{\theta}{\hat z},
    \end{align*}
    which is equivalent to
    \begin{align*}
        \left( \gamma^\star \|\theta\|_* - \frac{G+\delta}{\delta}\inner{\theta}{\hat z} \right)^2 \leq \frac{C - L(0) - L'(0)\inner{\theta}{\hat z}}{\delta} + \left( \frac{G+\delta}{\delta} + \frac{(G+\delta)^2}{\delta^2} \right) (\inner{\theta}{\hat z})^2.
    \end{align*}
    Recalling that $\theta\neq 0$, we may conclude that $\gamma^\star$ belongs to the sublevel set of a strictly convex quadratic function and is therefore bounded. As $\gamma^\star \in \Gamma^\star(\lambda^\star, \hat z)$ was chosen freely, $\Gamma^\star(\lambda^\star, \hat z)$ is indeed bounded. 
        
    We are now ready to construct the desired lower bound on $\lambda^\star$. Proposition~\ref{proposition:gamma}\,\ref{proposition:structural:finite} implies that every maximizer $\gamma^\star\in\Gamma^\star (\lambda^\star, \hat z)$ satisfies $| \gamma^\star | \geq \frac{\| \theta \|_*}{2 \lambda^\star} |L'(\inner{\theta}{\hat z})|$, and thus we have
    \begin{align*}
        \frac{\| \theta \|_*^2}{4 (\lambda^\star)^2}\, \E_{\hat Z \sim \hat{\P}} \left[ |L'(\inner{\theta}{\hat Z})|^2 \right] 
        \leq \E_{\hat Z \sim \hat{\P}} \left[ \min_{\gamma^\star \in \Gamma^\star(\lambda^\star,\hat Z)} (\gamma^\star)^2 \right] = - \E_{\hat Z \sim \hat{\P}} \left[ \partial_\lambda \ell_c(\theta,(\lambda^\star)^+,\hat Z) \right]
        \leq \eps,
    \end{align*}
    where the equality and the second inequality follow from~\eqref{eq:env:thm} and~\eqref{eq:right-derivative}, respectively. The desired lower bound on $\lambda^\star$ finally follows from elementary rearrangements of the above inequality.
\end{proof}

\begin{corollary}
    \label{corollary:lambda}
    Suppose that all assumptions of Proposition~\ref{proposition:lambda} hold, and define the constants
    \[
        \underline{L} = \inf_{\theta \in \Theta} \E_{\hat Z \sim \hat{\P}}[|L'(\inner{\theta}{\hat Z}|^2]\quad \text{and} \quad R = \sup_{\theta \in \Theta} \| \theta \|_*.
    \]
    If $0<\eps < \underline{L} / (R M)^2$, then problem~\eqref{eq:min:lambda} is solved by some $\lambda^\star > M \| \theta \|_*^2 / 2$.
\end{corollary}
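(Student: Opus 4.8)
The plan is to deduce the claim directly from the two-sided bound on the minimizers of~\eqref{eq:min:lambda} established in Proposition~\ref{proposition:lambda}, together with the definitions of~$\underline{L}$ and~$R$. First I would fix~$\theta\in\Theta$ and dispose of the degenerate case~$\theta=0$: there~$\ell_c(0,\lambda,\hat z)=L(0)$ for every~$\lambda\ge 0$, so~\eqref{eq:min:lambda} collapses to~$\inf_{\lambda\ge0}\lambda\eps+\E_{\hat Z\sim\hat{\P}}[L(0)]$ and the statement is vacuous. Assuming~$\theta\neq 0$, the hypothesis~$0<\eps<\underline{L}/(RM)^2$ forces~$\underline{L},R,M\in(0,\infty)$, so Proposition~\ref{proposition:lambda} applies and supplies a minimizer~$\lambda^\star$ of~\eqref{eq:min:lambda} satisfying
\[
    \lambda^\star \;\ge\; \frac{\|\theta\|_*}{2\sqrt{\eps}}\,\Big(\E_{\hat Z\sim\hat{\P}}\big[|L'(\inner{\theta}{\hat Z})|^2\big]\Big)^{1/2}.
\]

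The next step is to simplify the right-hand side. Since~$\theta\in\Theta$ and~$\underline{L}=\inf_{\theta\in\Theta}\E_{\hat Z\sim\hat{\P}}[|L'(\inner{\theta}{\hat Z})|^2]$, we have~$\E_{\hat Z\sim\hat{\P}}[|L'(\inner{\theta}{\hat Z})|^2]\ge\underline{L}$, hence~$\lambda^\star\ge\|\theta\|_*\sqrt{\underline{L}}/(2\sqrt{\eps})$. I then need this lower bound to strictly exceed~$M\|\theta\|_*^2/2$; after cancelling the common factor~$\|\theta\|_*/2>0$, this reduces to the elementary inequality~$\underline{L}>\eps M^2\|\theta\|_*^2$. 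Bounding~$\|\theta\|_*\le R$ turns it into~$\underline{L}>\eps M^2 R^2$, which is exactly the hypothesis~$\eps<\underline{L}/(RM)^2$. Consequently~$\lambda^\star> M\|\theta\|_*^2/2$, so that~$\lambda^\star$ lies in the regime in which Proposition~\ref{proposition:gamma}\,\ref{proposition:structural:unique} guarantees that the $c$-transform evaluation problem~\eqref{eq:Lc:2} is convex with a unique maximizer.

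I do not anticipate any genuine obstacle: the corollary is essentially a one-line consequence of the lower bound in Proposition~\ref{proposition:lambda}, whose substantive content (the envelope-theorem computation of~$\partial_\lambda\ell_c$ and the compactness of~$\Gamma^\star(\lambda^\star,\hat z)$) has already been discharged there. The only point requiring a moment's care is that the lower bound in Proposition~\ref{proposition:lambda} is non-strict, so one must use the strictness of~$\eps<\underline{L}/(RM)^2$ to obtain the strict conclusion~$\lambda^\star> M\|\theta\|_*^2/2$; and the degenerate case~$\theta=0$ must be read as vacuous, as noted above.
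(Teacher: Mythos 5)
Your proposal is correct and follows essentially the same route as the paper: both invoke the lower bound on $\lambda^\star$ from Proposition~\ref{proposition:lambda}, then chain the definition of $\underline{L}$, the hypothesis $\eps<\underline{L}/(RM)^2$, and the bound $\|\theta\|_*\leq R$ to conclude $\lambda^\star> M\|\theta\|_*^2/2$ (the paper applies these three estimates in a slightly different order, which changes nothing). Your explicit remarks on the $\theta=0$ edge case and on where the strictness enters are fine and, if anything, slightly more careful than the paper's own one-line chain.
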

\begin{proof}
    By Proposition~\ref{proposition:lambda}, problem~\eqref{eq:min:lambda} has a minimizer~$\lambda^\star$ that satisfies
    \begin{align*}
        \lambda^\star \geq \sqrt{\E_{\hat Z \sim \hat{\P}} \big[ |L'(\inner{\theta}{\hat Z})|^2 \big]} \frac{\| \theta \|_*}{2 \sqrt{\eps}} > \sqrt{\E_{\hat Z \sim \hat{\P}} \big[ |L'(\inner{\theta}{\hat Z})|^2 \big]}\frac{RM \| \theta \|_* }{2 \sqrt{\underline{L}}} \geq \frac{RM \| \theta \|_*}{2} \geq \frac{M \| \theta \|_*^2}{2},
    \end{align*}
    where the second inequality holds because $\eps < \underline{L} / (R M)^2$, the third inequality follows from the definition of $\overline{L}$, and the last inequality follows from the definition of $R$. Thus the claim follows.
\end{proof}

Corollary~\ref{corollary:lambda} implies that if $0 < \eps < \underline{L} / (R M)^2$, then problem~\eqref{eq:min:lambda} is equivalent to
\begin{align}
    \label{eq:efficient-SGD}
    \inf_{\lambda > M \| \theta \|_*^2 / 2}\; \lambda \eps + \E_{\hat Z \sim \hat{\P}} \left[ \ell_{c}(\theta, \lambda, \hat Z) \right].
\end{align}
By Proposition~\ref{proposition:gamma}\,\ref{proposition:structural:unique}, evaluating the $c$-transform $\ell_{c}(\theta, \lambda, \hat z)$ at any feasible~$\lambda$ thus amounts to solving a univariate convex maximization problem. Thanks to the envelope theorem~\citep[Theorem~2.16]{de2000mathematical}, the solution of this maximization problem also provides a subgradient of $\ell_{c}(\theta, \lambda, \hat z)$. Thus, problem~\eqref{eq:efficient-SGD} is amenable to stochastic subgradient descent algorithms, where all subgradients can be computed by solving univariate convex optimization problems. By extension, the same conclusions apply to the distributionally robust learning model~\eqref{eq:dro:linear} if we replace the inner maximization problem with~\eqref{eq:efficient-SGD}.

\section{Additional Proofs}
\begin{proof}[Proof of Proposition~\ref{prop:non-uniqueness}]
    The conic duality theorem~\citep[Theorem~1.4.2]{ben2001lectures} ensures that the minimization problem~\eqref{eq:reg:svm}, which trivially admits a Slater point, and the maximization problem~\eqref{eq:dual:svm:light} are strong duals. Thus, problem~\eqref{eq:dual:svm:light} is solvable and has the same optimal value as problem~\eqref{eq:reg:svm}. Theorem~\ref{theorem:nash:computation:ml} further implies that problems~\eqref{eq:dual:svm:light} and~\eqref{eq:dual:svm} share the same optimal values, too. In the following we use any maximizer~$(\{\tilde q^\star_j\}_{j})$ of~\eqref{eq:dual:svm:light} to construct a family of optimal solutions~$(\{q_{ij}^\star(\alpha), \xi^\star_{ij}(\alpha)\}_{i,j})$ for problem~\eqref{eq:dual:svm} parametrized by~$\alpha\in A$. Specifically, we set $q_{1j}^\star(\alpha) = {1}/{J} - \tilde q_{j}^\star$, $q_{2j}^\star(\alpha) = \tilde q_{j}^\star$ and $\xi_{1j}^\star(\alpha) = 0$ for all $j \in [J]$, $\xi_{2j}^\star(\alpha) = 0$ for all $j \in [J]$ with $\tilde q_j^\star = 0$, and $\xi_{2j}^\star(\alpha) = \alpha_j \hat{y}_j \tilde \xi$  for all $j \in [J]$ with  $\tilde q_j^\star > 0$.
    By construction, $(\{q_{ij}^\star(\alpha), \xi^\star_{ij}(\alpha)\}_{i,j})$ is feasible in~\eqref{eq:dual:svm}, and its objective function value in~\eqref{eq:dual:svm} matches the optimal value of problem~\eqref{eq:dual:svm:light}, which equals the optimal value of problem~\eqref{eq:dual:svm}. Hence, $(\{q_{ij}^\star(\alpha), \xi^\star_{ij}(\alpha)\}_{i,j})$ solves~\eqref{eq:dual:svm} for any~$\alpha \in A$. In addition, one readily verifies that~$\cI_j^\infty(\alpha) = \{ i \in [I] : q_{ij}^\star(\alpha) = 0,\, \xi_{ij}^\star (\alpha)\neq 0 \}=\emptyset$ for all $j\in[J]$ and~$\alpha \in A$. This implies that Assumption~\ref{assumption:nash:regularity2} is satisfied irrespective of~$\alpha\in\cA$. By Theorem~\ref{theorem:nash:computation:ml}\,\ref{theorem:nash:computation:Q:ml} and the definitions of~$\cJ_0$ and~$\cJ_+$, the discrete distribution~$\Q^\star(\alpha)$ therefore represents a Nash strategy of nature for every~$\alpha \in A$. This observation completes the proof.
\end{proof}

\begin{proof}[Proof of Proposition~\ref{proposition:portfolio}]
    Assume first that~$\lambda < 1 / \| \theta \|_0$. Then, we have
    \begin{align*}
        \ell_c(\theta, \lambda, \hat z) 
        &\geq \lim_{\gamma \to -\infty} ~ 1 + \log(-\gamma)+ \sum_{i \in [d]} \lambda h(\gamma \theta_i \hat z_i/\lambda)  \\ 
        &= \lim_{\gamma \to -\infty} ~ 1 + \log(-\gamma)+ \sum_{i \in [d]:\,\theta_i>0} -\lambda - \lambda \log(-\gamma \theta_i \hat z_i / \lambda) \\
        &= \lim_{\gamma \to -\infty} ~ ( 1 - \lambda\, \| \theta \|_0 ) (1 + \log(-\gamma)) - \sum_{i \in [d]:\,\theta_i>0} \lambda \log(\theta_i \hat z_i / \lambda)=+\infty,
    \end{align*}
    where the first equality follows from the definition of~$h$, and the third equality holds because $\lambda < 1 / \| \theta \|_0$. 
    
    Next, assume that $\lambda > 1 / \| \theta \|_0$. Using a similar reasoning as above, one can then show that the objective function of problem~\eqref{eq:robust:portfolio} drops to~$-\infty$ as $\gamma$ either tends to~$-\infty$ or to~$0$. Therefore, problem~\eqref{eq:robust:portfolio} has a maximizer. In addition, as the objective function of problem~\eqref{eq:robust:portfolio} is strictly concave and differentiable, this maximizer is unique and fully determined by the first-order optimality condition
    \begin{align}
        \label{eq:optimality:condition}
        \frac{1}{\gamma} + \sum_{i \in \cD(\gamma)} u_i - \sum_{i \in [d] \backslash \cD(\gamma)} \frac{\lambda}{\gamma} = 0 \quad\iff\quad \gamma=\frac{(d - | \cD(\gamma) |) \lambda - 1}{\sum_{i \in \cD(\gamma)} u_i},
    \end{align}
    where $\cD(\gamma) = \{ i \in [d]: \gamma u_i \geq -\lambda \}$ and $u_i=\theta_i\hat z_i$ for every $i\in[d]$. To show that $\gamma^\star$ solves~\eqref{eq:optimality:condition} and~\eqref{eq:robust:portfolio}, it thus suffices to prove that $\cD(\gamma^\star)=\{\sigma(i):i\in[k]\}$. To this end, we first verify that the critical index $k$ is well-defined as the maximal element of a non-empty finite set. This is indeed the case because
    \begin{align*}
        (1 - (d-1) \lambda)u_{\sigma(1)} \leq \left(1 - \frac{d-1}{d} \right)u_{\sigma(1)}= \frac{u_{\sigma(1)}}{d}  \leq \lambda u_{\sigma(1)},
    \end{align*}
    where both inequalities follow from our assumption that~$\lambda > 1 / \| \theta \|_0 > 1/d$ and from the non-negativity of~$u_{\sigma(1)}$. We may thus conclude that $k\geq 1$. Next, we use induction to show that $\sigma(i)\in\cD(\gamma^\star)$ for every $i\in [k]$. As for the base step corresponding to~$i=k$, we may use the definition of~$k$ to find
    \begin{align*}
        (1 - (d-k) \lambda) u_{\sigma(k)} \leq \lambda \sum_{j \in [k]} u_{\sigma(j)} \quad\iff \quad \gamma^\star u_{\sigma(k)} \geq -\lambda,
    \end{align*}
    where the equivalence exploits the definition of~$\gamma^\star$ in the proposition statement. Hence,  $\sigma(k)\in\cD(\gamma^\star)$. As for the induction step, assume that $\sigma(i)\in\cD(\gamma^\star)$ for some $i \in [k]$ with $i>1$. Thus, we have
    \begin{align*}
        (1 - (d-i) \lambda) u_{\sigma(i)} \leq \lambda \sum_{j \in [i]} u_{\sigma(j)} 
        \quad \implies \quad
        (1 - (d-(i-1)) \lambda) u_{\sigma(i)} \leq \lambda \sum_{j \in [i-1]} u_{\sigma(j)}.
    \end{align*}
    Since the permutation~$\sigma$ sorts the parameters $\{u_i\}_{i\in[d]}$ in ascending order, the last inequality implies 
    \begin{align*}
        &(1 - (d-(i-1)) \lambda) u_{\sigma(i-1)} \leq (1 - (d-(i-1)) \lambda) u_{\sigma(i)} 
        \leq \lambda \sum_{j \in [i-1]} u_{\sigma(j)}.
    \end{align*}
    This in turn allows us to conclude that
    \begin{align*}
        (1 - (d-k) \lambda) u_{\sigma(i-1)} \leq \lambda \sum_{j \in [k]} u_{\sigma(j)} \quad \implies \quad \gamma^\star u_{\sigma(i-1)} \geq -\lambda,
    \end{align*}
    where the implication follows again from the definition of~$\gamma^\star$. Hence, $\sigma(i-1)\in\cD(\gamma^\star)$, which completes the induction step. We have thus shown that $\sigma(i) \in \cD(\gamma^\star)$ for all $i \leq k$. In addition, it is clear from the definition of the critical index $k$ that any $i > k$ satisfies
    \begin{align*}
        &(1 - (d-i) \lambda) u_{\sigma(i)} > \lambda \sum_{j \in [i]} u_{\sigma(j)} \quad \implies \quad (1 - (d-k) \lambda) u_{\sigma(i)} > \lambda \sum_{j \in [k]} u_{\sigma(j)} \quad \implies \quad \gamma^\star u_{\sigma(i)} < -\lambda,
    \end{align*}
    and thus $\sigma(i) \in [d] \backslash \cD(\gamma^\star)$. In summary, we have thus shown that  $\cD(\gamma^\star)=\{\sigma(i):i\in[k]\}$, which confirms via the optimality condition~\eqref{eq:optimality:condition} that $\gamma^\star$ solves indeed the maximization problem~\eqref{eq:robust:portfolio}.

    Finally, assume that~$\lambda = 1 / \| \theta \|_0$. By using similar arguments as in the last part of the proof, one can show that $\gamma^\star$ remains optimal in~\eqref{eq:robust:portfolio} but is no longer unique. Details are omitted for brevity.  
\end{proof}

\section{Technical Background Results}

\begin{lemma}
\label{lemma:conv:envelope}
    If $p\geq 1$ and  $L:\R \to \R$ is an upper semicontinuous function satisfying the growth condition $L(s) \leq C (1 + |s|^p)$ for some $C \geq 0$, then the $p$-th envelope $L_p(s, \lambda) = \sup_{s' \in \R}\; L(s')-\lambda | s-s' |^p$ converges to $L(s)$ as $\lambda$ grows, that is, $\lim_{\lambda \to +\infty} L_p(s,\lambda) = L(s)$ for all $s \in \R$.
\end{lemma}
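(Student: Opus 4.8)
The plan is to establish the two one-sided bounds $\liminf_{\lambda\to+\infty} L_p(s,\lambda)\ge L(s)$ and $\limsup_{\lambda\to+\infty} L_p(s,\lambda)\le L(s)$ separately, and then combine them. The lower bound is immediate: for every $\lambda\ge 0$ the choice $s'=s$ in the definition of $L_p(s,\lambda)=\sup_{s'\in\R} L(s')-\lambda|s-s'|^p$ is feasible and gives $L_p(s,\lambda)\ge L(s)$, hence $\liminf_{\lambda\to+\infty} L_p(s,\lambda)\ge L(s)$. (As an aside, $L_p(s,\lambda)$ is non-increasing in $\lambda$, so the limit exists in $[L(s),+\infty]$, but this observation is not actually needed.)

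For the upper bound, fix $s\in\R$ and $\eta>0$. Upper semicontinuity of $L$ at $s$ yields some $\delta>0$ with $L(s')\le L(s)+\eta$ whenever $|s'-s|\le\delta$. I would then split the supremum defining $L_p(s,\lambda)$ over the two regions $\{|s'-s|\le\delta\}$ and $\{|s'-s|>\delta\}$. On the first region the penalty is non-negative, so the contribution is at most $\sup_{|s'-s|\le\delta}L(s')\le L(s)+\eta$. On the second region I would combine the growth hypothesis $L(s')\le C(1+|s'|^p)$ with the elementary convexity inequality $|s'|^p\le 2^{p-1}(|s|^p+|s-s'|^p)$ (valid for $p\ge1$, since $t\mapsto t^p$ is convex on $[0,\infty)$ and $|s'|\le|s|+|s-s'|$) to obtain $L(s')-\lambda|s-s'|^p\le C(1+2^{p-1}|s|^p)+(C2^{p-1}-\lambda)|s-s'|^p$. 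Because $|s-s'|^p>\delta^p$ on this region, for every $\lambda>C2^{p-1}$ the right-hand side is bounded above by $C(1+2^{p-1}|s|^p)+(C2^{p-1}-\lambda)\delta^p$, which tends to $-\infty$ as $\lambda\to+\infty$. Hence there is $\bar\lambda$ such that the second region contributes at most $L(s)$ whenever $\lambda>\bar\lambda$, so $L_p(s,\lambda)\le L(s)+\eta$ for all $\lambda>\bar\lambda$. Letting $\lambda\to+\infty$ and then $\eta\downarrow 0$ gives $\limsup_{\lambda\to+\infty}L_p(s,\lambda)\le L(s)$.

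Combining the two bounds yields $\lim_{\lambda\to+\infty}L_p(s,\lambda)=L(s)$, and since $s\in\R$ was arbitrary this is exactly the claim. I do not anticipate a genuine obstacle; the only delicate point is ensuring that the quadratic-type penalty $\lambda|s-s'|^p$ eventually dominates the growth bound $C(1+|s'|^p)$ \emph{uniformly} on the far region $\{|s'-s|>\delta\}$ rather than merely pointwise — which is precisely what the inequality $|s'|^p\le 2^{p-1}(|s|^p+|s-s'|^p)$ secures, once $\lambda$ exceeds the explicit threshold $C2^{p-1}$.
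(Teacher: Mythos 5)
Your proof is correct. It rests on exactly the same two ingredients as the paper's argument --- the elementary convexity bound $|s'|^p\le 2^{p-1}\bigl(|s|^p+|s-s'|^p\bigr)$ combined with the growth hypothesis to make the penalty $\lambda|s-s'|^p$ dominate away from $s$, and upper semicontinuity to control what happens near $s$ --- but the bookkeeping is organized differently. The paper selects near-maximizers $s_\lambda$ with $L_p(s,\lambda)\le L(s_\lambda)-\lambda|s-s_\lambda|^p+1/\lambda$, uses the same $2^{p-1}$ estimate to deduce the quantitative localization $|s-s_\lambda|^p\le\bigl(C(1+2^{p-1}|s|^p)-L(s)+1/\lambda\bigr)/(\lambda-C2^{p-1})\to 0$, and then invokes upper semicontinuity along the sequence $s_\lambda\to s$. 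You instead split the supremum into the regions $\{|s'-s|\le\delta\}$ and $\{|s'-s|>\delta\}$ for a $\delta$ furnished by upper semicontinuity at level $\eta$, and show the far region is eventually dominated once $\lambda>C2^{p-1}$. Your route avoids the (harmless but slightly fussy) extraction of approximate maximizers and makes the trivial lower bound $L_p(s,\lambda)\ge L(s)$ explicit, which the paper uses only implicitly in its chain of inequalities; the paper's route has the small added benefit of producing the rate $|s-s_\lambda|^p=O(1/\lambda)$ for the localization of near-maximizers, which can be of independent interest. The only cosmetic point in your write-up is that upper semicontinuity gives the bound $L(s')\le L(s)+\eta$ on an open ball $|s'-s|<\delta$; shrinking $\delta$ or splitting at the open/closed boundary fixes this trivially and does not affect the argument.
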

\begin{proof}
    By the definition of $L_p$, for any $\lambda>0$ there exists $s_\lambda\in\R$ with
    \begin{align*}
        L_p(s,\lambda) \leq L(s_\lambda) - \lambda |s-s_\lambda|^p + \frac{1}{\lambda}.
    \end{align*}
    In the following we prove that $s_\lambda$ converges to $s$ as $\lambda$ tends to infinity. To this end, note first that
    \begin{align*}
        L(s) &\leq L_p(s,\lambda) \leq L(s_\lambda) - \lambda |s-s_\lambda|^p + \frac{1}{\lambda} \leq C(1+|s_\lambda|^p) - \lambda |s-s_\lambda|^p + \frac{1}{\lambda}\\ 
        &\leq C \left(1 + (|s| + |s-s_\lambda|)^p \right)- \lambda |s-s_\lambda|^p + \frac{1}{\lambda} \leq C(1+2^{p-1} |s|^p) - (\lambda - C 2^{p-1})|s-s_\lambda|^p + \frac{1}{\lambda},
    \end{align*}
    where the first and the second inequalities exploit the definitions of $L_p$ and $s_\lambda$, respectively, whereas the third and the fourth inequalities follow from the growth condition on $L$ and the triangle inequality, respectively. The last inequality, finally, holds because $p\geq 1$, which implies via Jensen's inequality that
    \[
        \textstyle (|s| + |s-s_\lambda|)^p = 2^{p} \left(\frac{1}{2}|s| + \frac{1}{2} |s-s_\lambda|\right)^p \leq 2^{p-1} \left(|s|^p + |s-s_\lambda|^p\right).
    \]
    Rearranging terms thus yields
    \begin{align*}
        |s-s_\lambda|^p \leq \left( C(1+2^{p-1} |s|^p) - L(s) + \frac{1}{\lambda} \right)/ (\lambda - C 2^{p-1}),
    \end{align*}
    which ensures that $s_\lambda$ converges to $s$ as $\lambda$ grows. Recalling the definition of $L_p(x,\lambda)$, we finally obtain
    \begin{align*}
        L(s) \leq \lim_{\lambda \to + \infty} L_p(s,\lambda) \leq \limsup_{\lambda \to +\infty} L(s_\lambda) - \lambda |s-s_\lambda|^p + \frac{1}{\lambda} \leq \limsup_{\lambda \to +\infty} L(s_\lambda) \leq L(s),
    \end{align*}
    where the second and the third inequalities follow from the definition of $s_\lambda$ and the upper semicontinuity of $L$, respectively. As both sides of the above expression coincide with $L(s)$, we conclude that all inequalities are in fact equalities. This proves that $\lim_{\lambda \to +\infty} L_p(s,\lambda) = L(s)$ for all $s \in \R$.
\end{proof}
    \bibliographystyle{myabbrvnat} 
    \bibliography{bibfile.bib}
\else
  \begin{abstract}
    We study optimal transport-based distributionally robust optimization problems where a fictitious adversary, often envisioned as nature, can choose the distribution of the uncertain problem parameters by reshaping a prescribed reference distribution at a finite transportation cost. In this framework, we show that robustification is intimately related to various forms of variation and Lipschitz regularization even if the transportation cost function fails to be (some power of) a metric. We also derive conditions for the existence and the computability of a Nash equilibrium between the decision-maker and nature, and we demonstrate numerically that nature's Nash strategy can be viewed as a distribution that is supported on remarkably deceptive adversarial samples. Finally, we identify practically relevant classes of optimal transport-based distributionally robust optimization problems that can be addressed with efficient gradient descent algorithms even if the loss function or the transportation cost function are nonconvex (but not both at the same time). 
\end{abstract}

\section{Introduction}

Because of their relevance for empirical risk minimization in machine learning, stochastic optimization methods are becoming increasingly popular beyond their traditional application domains in operations research and economics \citep{shapiro2014lectures}. In the wake of the ongoing data revolution and the rapid emergence of ever more complex decision problems, there is also a growing need for stochastic optimization models outputting reliable decisions that are insensitive to input misspecification and easy to compute. 

A (static) stochastic optimization problem aims to minimize the expected value~$\E_{Z\sim \P} [\ell(\theta, Z)]$ of an uncertainty-affected loss function~$\ell: \R^m \times \R^d \to (-\infty, \infty]$ across all feasible decisions~$\theta \in \Theta \subseteq \R^m$, where~$Z$ is the random vector of all uncertain problem parameters that is governed by some probability distribution~$\P$ supported on $\cZ \subseteq \R^d$. To exclude trivialities, we assume throughout the paper that the feasible set~$\Theta$ and the support set~$\cZ$ are non-empty and closed. Despite their simplicity, static stochastic optimization problems are ubiquitous in statistics and machine learning, among many other application domains. However, their practical deployment is plagued by a fundamental challenge, namely that the distribution~$\P$ governing~$Z$ is rarely accessible to the decision-maker. In a data-driven decision situation, for instance, $\P$ is only indirectly observable through a set of independent training samples. In this case one may use standard methods from statistics to construct a parametric or non-parametric reference distribution~$\hat{\P}$ from the data. However, $\hat \P$ invariably differs from~$\P$ due to inevitable statistical errors, and optimizing in view of~$\hat{\P}$ instead of~$\P$ may lead to decisions that display a poor performance on test data. For example, in machine learning it is well known that deep neural networks trained in view of the empirical distribution of the training data can easily be fooled by adversarial examples, that is, test samples subject to seemingly negligible noise that cause the neural network to make a wrong prediction. Even worse, the decision problem at hand could suffer from a distribution shift, that is, the training data may originate from a distribution other than~$\P$, under which decisions are evaluated.

A promising strategy to mitigate the detrimental effects of estimation errors in the reference distribution~$\hat{\P}$ would be to minimize the worst-case expected loss with respect to all distributions in some neighborhood of~$\hat{\P}$. There is ample evidence that, for many natural choices of the neighborhood of~$\hat{\P}$, this distributionally robust approach leads to tractable optimization models and provides a simple means to derive powerful generalization bounds \citep{esfahani2018data, lam2019recovering, blanchet2019confidence, van2020data, duchi2020learning, gao2023finite, duchi2021statistics}. Specialized distributionally robust approaches may even enable generalization in the face of domain shifts \citep{farnia2016minimax, volpi2018generalizing,  lee2018minimax, duchi2020learning} or may make the training of deep neural networks more resilient against adversarial attacks~\citep{sinha2018certifying, wang2019convergence, tu2019theoretical, kwon2020principled}. 

More formally, distributionally robust optimization (DRO) captures the uncertainty about the unknown distribution~$\P$ through an ambiguity set~$\B_{\eps}(\hat{\P})$, that is, an $\eps$-neighborhood of the reference distribution $\hat{\P}$ with respect to a distance function on the space $\cP(\cZ)$ of all distributions supported on~$\cZ$. Using this ambiguity set, the DRO problem of interest is formulated as the minimax problem
\begin{align}
    \label{eq:dro}
    \inf_{\theta \in \Theta} \sup_{\Q \in \B_{\eps}(\hat{\P})} ~ \E_{Z\sim\Q} \left[ \ell(\theta, Z) \right].
\end{align}
Problem~\eqref{eq:dro} can be viewed as a zero-sum game between the decision-maker, who chooses the decision~$\theta$, and a fictitious adversary, often envisioned as `nature', who chooses the distribution~$\Q$ of~$Z$. From now on we define nature's feasible set~$\B_{\eps}(\hat{\P}) =\{ \Q \in \cP(\cZ): d_c ( \Q , \hat{\P}) \leq \eps\}$ as a pseudo-ball of radius~$\eps\geq 0$ around~$\hat{\P}$ with respect to an optimal transport discrepancy $d_c:\cP(\cZ)\times \cP(\cZ)\to [0, +\infty]$ defined through 
\begin{equation*}
    d_c ( \P, \hat{\P}) = \inf_{\pi \in \Pi(\P, \hat{\P})} \E_{(Z,\hat Z)\sim\pi} [c(Z, \hat Z)].
\end{equation*}
Here, $c: \cZ \times \cZ \to [0, +\infty]$ is a prescribed transportation cost function satisfying the identity of indiscernibles ($c(z, \hat z) = 0$ if and only if $z = \hat z$), and $\Pi(\P, \hat{\P})$ represents the set of all joint probability distributions of~$Z$ and~$\hat Z$ with marginals~$\P$ and~$\hat{\P}$, respectively. It is conventional to refer to elements of $\Pi(\P, \hat{\P})$ as couplings or transportation plans. If~$c(z, \hat z) = \| z - \hat z \|^p$ for some norm~$\|\cdot\|$ on (the span of)~$\cZ$ and for some exponent~$p\in\N$, then~$W_p=\sqrt[p]{d_c}$ constitutes a metric on~$\cP_c(\cZ)$ \citep[\textsection~6]{villani2008optimal}, which is termed the $p$-th Wasserstein distance. Note that the ambiguity set~$\B_{\eps}(\hat{\P})$ constructed in this way may be interpreted as the family of all probability distributions~$\Q$ that can be obtained by reshaping the reference distribution~$\hat{\P}$ at a finite cost of at most~$\eps\geq 0$, where the cost of moving unit probability from $\hat z$ to~$z$ is given by~$c(z,\hat z)$.

The following regularity conditions will be assumed to hold throughout the paper.

\begin{assumption}[Continuity assumptions]~
    \label{assumption:continuity}
    \begin{enumerate}[label=(\roman*)]
        \item \label{assumption:cost:lsc} The transportation cost function $c(z,\hat z)$ is lower semicontinuous in $(z,\hat z)$.
        \item \label{assumption:loss:usc} For any $\theta \in \Theta$, the loss function $\ell(\theta, \hat z)$ is upper semicontinuous and $\hat{\P}$-integrable in~$\hat z$.
    \end{enumerate} 
\end{assumption}
Assumption~\ref{assumption:continuity}\,\ref{assumption:cost:lsc} ensures that the optimal transport problem in the definition of~$d_c(\P,\hat{\P})$ is solvable~\citep[Theorem~4.1]{villani2008optimal} and admits a strong dual linear program over a space of integrable functions~\citep[Theorem~5.10]{villani2008optimal}. Together, Assumptions~\ref{assumption:continuity}\,\ref{assumption:cost:lsc} and~\ref{assumption:continuity}\,\ref{assumption:loss:usc} ensure that the inner maximization problem in~\eqref{eq:dro} admits a strong dual minimization problem \cite[Theorem~1]{blanchet2019quantifying}; see also Proposition~\ref{proposition:strong:duality} below. Optimal transport-based DRO problems of the form~\eqref{eq:dro} are nowadays routinely studied in diverse areas such as statistical learning \citep{shafieezadeh2015distributionally, gao2024wasserstein, chen2018robust, blanchet2019multivariate, blanchet2019robust, chen2019selecting,  shafieezadeh2019regularization, ho2023adversarial,aolaritei2023wasserstein}, estimation and filtering \citep{shafieezadeh2018wasserstein, nguyen2023bridging, nguyen2020distributionally}, control \citep{aolaritei2023awasserstein,Yang2020b,coulson2021distributionally}, dynamical systems theory \citep{Boskos2021Data-drivenProcesses, aolaritei2022distributional}, hypothesis testing \citep{gao2018robust}, inverse optimization \citep{esfahani2018inverse}, and chance constrained programming \citep{chen2024data, xie2019distributionally, ho2020distributionally, ho2023strong, shen2020chance} etc.; see~\citep{kuhn2019wasserstein} for a recent survey. The existing literature focuses almost exclusively on Wasserstein ambiguity sets, which are obtained by setting the transportation cost function to~$c(z, \hat z) = \|z - \hat z\|^p$ for some norm~$\|\cdot\|$ on~$\R^d$. Allowing for more general transportation cost functions enables the decision-maker to inject prior information about the likelihood of certain subsets of~$\cZ$ into the definition of the ambiguity set. 
For example, setting $c(z, \hat z) = \infty\cdot\mathds{1}_{z\not\in\A(\hat z)}$ for some closed set $\A(\hat z)\subseteq \cZ$ with~$\hat z\in \A(\hat z)$ ensures that the probability mass located at~$\hat z$ cannot be moved outside of~$\A(\hat z)$. More generally, assigning $c(z, \hat z)$ a large value for every~$\hat z$ in the support of the reference distribution~$\hat{\P}$ makes it expensive for nature to transport probability mass to~$z$. Thus, $z$ has a low probability under every distribution in the ambiguity set. The following proposition describes a strong dual of nature's inner maximization problem in~\eqref{eq:dro} and reveals how~$c$ impacts the solution of the underlying DRO problem. This strong duality result was first established in~\citep{esfahani2018data, zhao2018data} for finite-dimensional problems with discrete reference distributions and then generalized in~\citep{blanchet2019quantifying, gao2023distributionally}. 

\begin{proposition}[{Strong duality }]
    \label{proposition:strong:duality}
    If Assumption~\ref{assumption:continuity} holds, then we have
    \begin{align}
    \label{eq:inner:supremum}
    \sup_{\Q \in \B_{\eps}(\hat{\P})}\; \E_{Z \sim \Q} \left[ \ell(\theta, Z) \right] 
    = \inf_{\lambda \geq 0}\; \lambda \eps + \E_{\hat Z \sim \hat{\P}} \left[ \ell_{c}(\theta, \lambda, \hat Z) \right]
    \end{align}
    for any $\theta \in \Theta$ and $\eps > 0$, where $\ell_{c}(\theta, \lambda, \hat z) = \sup_{z \in \cZ:\,c(z,\hat z)<\infty} \; \ell(\theta, z) - \lambda c(z, \hat z)$. 
\end{proposition}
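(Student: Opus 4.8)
I would prove the two inequalities separately: ``$\le$'' (weak duality) is elementary, while ``$\ge$'' (absence of a duality gap) is the substantial part. For ``$\le$'', fix $\theta\in\Theta$, $\lambda\ge 0$ and $\Q\in\B_{\eps}(\hat\P)$. By Assumption~\ref{assumption:continuity}\,\ref{assumption:cost:lsc} the transport problem defining $d_c(\Q,\hat\P)$ attains its infimum, so I may pick an optimal coupling $\pi\in\Pi(\Q,\hat\P)$ with $\E_{(Z,\hat Z)\sim\pi}[c(Z,\hat Z)]=d_c(\Q,\hat\P)\le\eps<\infty$; hence $c(Z,\hat Z)<\infty$ holds $\pi$-almost surely, and on that full-measure event the definition of $\ell_c$ gives $\ell(\theta,Z)\le\ell_c(\theta,\lambda,\hat Z)+\lambda\, c(Z,\hat Z)$ (using $c(\hat z,\hat z)=0$ so that $\ell_c\ge\ell(\theta,\cdot)$ is $\hat\P$-a.s.\ finite from below). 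Integrating against $\pi$ yields $\E_{Z\sim\Q}[\ell(\theta,Z)]\le\E_{\hat Z\sim\hat\P}[\ell_c(\theta,\lambda,\hat Z)]+\lambda\eps$, and taking the supremum over $\Q$ and then the infimum over $\lambda$ proves ``$\le$''. This also shows that if the left-hand side is $+\infty$ then so is the right-hand side, so from here on I may assume the left-hand side is finite.

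\textbf{Reformulation and strong Lagrangian duality.} The plan for ``$\ge$'' is to identify the left-hand side with the value $v(0)$ of the perturbed family
\begin{equation*}
v(\eta)=\sup\left\{\E_{Z\sim\Q}[\ell(\theta,Z)]\;:\;\Q\in\cP(\cZ),\ d_c(\Q,\hat\P)\le\eps+\eta\right\},\qquad\eta\in\R,
\end{equation*}
and to apply convex duality in the perturbation parameter. Since $\Q\mapsto d_c(\Q,\hat\P)$ is convex (mixing couplings of $\Q_1$ and $\Q_2$ with $\hat\P$ yields a coupling of $t\Q_1+(1-t)\Q_2$ with $\hat\P$) and $\Q\mapsto\E_\Q[\ell(\theta,\cdot)]$ is affine on the convex set $\cP(\cZ)$, the function $v$ is concave, and it is clearly nondecreasing. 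The key structural point is that $\hat\P$ is a Slater point for $v(0)$, because $d_c(\hat\P,\hat\P)=0<\eps$ --- this is exactly where the hypothesis $\eps>0$ enters --- so $v(\eta)$ is finite on $(-\eps,0]$ (bounded below by $\E_{\hat\P}[\ell(\theta,\cdot)]$ and above by $v(0)$), and concavity then forces $v$ to be finite, hence continuous, in particular upper semicontinuous, on $(-\eps,\infty)\ni 0$. By the standard perturbation-function description of strong Lagrangian duality --- the dual value is the upper-semicontinuous concave envelope of $v$ at $0$, which equals $v(0)$ precisely because $v$ is concave and upper semicontinuous there --- I obtain
\begin{equation*}
v(0)=\inf_{\lambda\ge 0}\;\lambda\eps+\sup_{\Q\in\cP(\cZ)}\left(\E_{Z\sim\Q}[\ell(\theta,Z)]-\lambda\, d_c(\Q,\hat\P)\right).
\end{equation*}

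\textbf{Collapsing the inner supremum.} It remains to evaluate $\sup_{\Q\in\cP(\cZ)}\bigl(\E_\Q[\ell(\theta,\cdot)]-\lambda\, d_c(\Q,\hat\P)\bigr)$ for fixed $\lambda\ge 0$. Writing $d_c$ as an infimum over couplings and optimizing jointly over $\Q$ and its couplings with $\hat\P$ recasts this as $\sup\{\E_{(Z,\hat Z)\sim\pi}[\ell(\theta,Z)-\lambda\, c(Z,\hat Z)]:\pi\in\cP(\cZ\times\cZ)\text{ with second marginal }\hat\P\}$. I would disintegrate such a $\pi$ along $\hat\P$ into a stochastic kernel and invoke the interchangeability (measurable-selection) theorem for normal integrands to pull the supremum inside the $\hat\P$-expectation, obtaining $\E_{\hat Z\sim\hat\P}\bigl[\sup_{z\in\cZ}(\ell(\theta,z)-\lambda\, c(z,\hat z))\bigr]=\E_{\hat Z\sim\hat\P}[\ell_c(\theta,\lambda,\hat Z)]$; for $\lambda>0$ the terms with $c(z,\hat z)=\infty$ drop out automatically, recovering the constrained supremum in the definition of $\ell_c$, and the value $\lambda=0$ is handled separately by letting $\lambda\downarrow 0$ and using monotone convergence together with $\ell_c(\theta,\lambda,\cdot)\ge\ell(\theta,\cdot)$. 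Substituting back into the previous display gives $v(0)=\inf_{\lambda\ge 0}\lambda\eps+\E_{\hat Z\sim\hat\P}[\ell_c(\theta,\lambda,\hat Z)]$, which is the assertion.

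\textbf{Main obstacle.} The hard part is this last step, namely a rigorous justification of the interchange of supremum and integral. One must verify that $(\hat z,z)\mapsto\ell(\theta,z)-\lambda\, c(z,\hat z)$ is a jointly measurable integrand that is upper semicontinuous in $z$ and admits measurable $\varepsilon$-maximizing sections $\hat z\mapsto z^\star(\hat z)$ with $c(z^\star(\hat z),\hat z)<\infty$, and one must track the integrability of $\ell_c(\theta,\lambda,\cdot)$ throughout --- this is exactly where both parts of Assumption~\ref{assumption:continuity} are indispensable (lower semicontinuity of $c$ for measurability of the sections and of $\ell_c$; upper semicontinuity and $\hat\P$-integrability of $\ell$ for the existence of the sections and the integrability bookkeeping). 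A more elementary but more laborious alternative, which avoids infinite-dimensional duality altogether, would replace the reformulation step by a direct primal construction: take a near-minimizer $\bar\lambda$ of $\lambda\mapsto\lambda\eps+\E_{\hat\P}[\ell_c(\theta,\lambda,\cdot)]$, select a measurable near-maximizer $z^\star(\cdot)$, and use the pushforward of $\hat\P$ under $z^\star$ --- interpolated with $\hat\P$ if its transport cost overshoots $\eps$ --- as a near-optimal distribution for the left-hand side; this route then requires a case analysis according to whether the dual minimizer is interior, zero, or escapes to infinity, together with an interchange of subdifferentiation and integration.
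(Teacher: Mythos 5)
Your argument is correct in outline, but it is worth noting that the paper itself does not prove Proposition~\ref{proposition:strong:duality} at all: it simply invokes \cite[Theorem~1]{zhang2022simple}, so there is no in-paper proof to match. Your route is the classical one-dimensional perturbation/Lagrangian argument (in the spirit of Gao--Kleywegt and Blanchet--Murthy): weak duality via an optimal coupling, concavity and monotonicity of the perturbed value function $v(\eta)$, the Slater point $\Q=\hat\P$ (which is exactly where $\eps>0$ is used) to get finiteness and continuity of $v$ at $0$, and then the interchangeability principle to collapse the inner supremum into $\E_{\hat\P}[\ell_c]$. The proof of the result the paper cites instead follows the ``more elementary alternative'' you sketch in your last paragraph: a direct primal construction of near-optimal two-point perturbations of $\hat\P$ that avoids infinite-dimensional duality, at the price of the case analysis you describe. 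Your approach buys a conceptually cleaner picture (the dual variable $\lambda$ arises as the multiplier of the budget constraint, and the role of $\eps>0$ as a Slater condition is transparent); the constructive approach buys explicit approximate worst-case distributions and avoids the measurable-selection machinery. The one place where your write-up is a plan rather than a proof is the interchange of supremum and $\hat\P$-integration: this does go through---$-\ell(\theta,z)+\lambda c(z,\hat z)$ is a normal integrand by joint lower semicontinuity of $c$ and upper semicontinuity of $\ell(\theta,\cdot)$, so \citep[Theorem~14.60]{rockafellar2009variational} applies---but since it is where all of Assumption~\ref{assumption:continuity} is actually consumed, a complete proof would have to execute it rather than gesture at it. Your treatment of $\lambda=0$ via monotone convergence of $\ell_c(\theta,\lambda,\cdot)\downarrow_{\lambda}$, bounded below by the $\hat\P$-integrable $\ell(\theta,\cdot)$, is also correct but should be stated as showing $\inf_{\lambda\ge0}=\inf_{\lambda>0}$ for the dual objective.
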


Proposition~\ref{proposition:strong:duality} follows immediately from \cite[Theorem~1]{zhang2022simple}; see also \cite[Theorem~4.18]{kuhn2024distributionally}.
It is well known that checking whether a given distribution~$\Q\in \cP(\cZ)$ belongs to~$\B_{\eps}(\hat{\P})$ is $\#$P-hard even if~$\hat{\P}$ is discrete \citep[Theorem~2.2]{tacskesen2023semi}. Thus, unless $\#\text{P}=\text{FP}$, there is no efficient algorithm to check feasibility in the worst-case expectation problem on the left hand side of~\eqref{eq:inner:supremum}. Thanks to Proposition~\ref{proposition:strong:duality}, however, the optimal value of this problem can often be computed efficiently by solving the dual problem on the right hand side of~\eqref{eq:inner:supremum}, which constitutes a univariate convex stochastic program. Indeed, the dual objective function involves the expected value of~$\ell_{c}(\theta, \lambda, \hat Z)$ with respect to the reference distribution~$\hat{\P}$, which is convex in~$\lambda$. In the following, we will refer to~$\ell_{c}$ as the $c$-transform of~$\ell$. A key challenge towards solving the dual problem lies in evaluating~$\ell_{c}$ and in establishing useful structural properties of~$\ell_{c}$ as a function of~$\lambda$.  
If $-\ell$ and $c$ are (piecewise) convex in~$z$ and if~$\cZ$ is a convex set, for example, then robust optimization techniques can be used to express~$\ell_{c}(\theta, \lambda, \hat z)$ as the optimal value of a finite convex minimization problem. If, additionally, the reference distribution~$\hat \P$ has a finite support, then the entire dual problem on the right hand side of~\eqref{eq:inner:supremum} simplifies to a finite convex minimization problem amenable to off-the-shelf solvers~\citep{esfahani2018data, zhen2023unified}. Treating~$\theta\in\Theta$ as an additional decision variable finally yields a reformulation of the original DRO problem~\eqref{eq:dro} as a single (possibly nonconvex) finite minimization problem. This reformulation is convex if~$\ell$ is convex in~$\theta$ and if~$\Theta$ is a convex set.

If $-\ell$ or~$c$ fail to be (piecewise) convex in~$z$, $\cZ$ fails to be convex, or~$\hat{\P}$ fails to have a finite support, then exact convex reformulation results are scarce (an exception is described in~\citep[\textsection~6.2]{esfahani2018data}). In these cases, one may still attempt to attack the dual problem in~\eqref{eq:inner:supremum} directly with (stochastic) gradient descent-type algorithms \citep{blanchet2018optimal, sinha2018certifying}. The efficacy of these methods is predicated on the structural properties of the $c$-transform~$\ell_c$. If the transportation cost function is representable as $c(z, \hat z) = \Psi (z - \hat z)$ for some real-valued univariate function $\Psi(\cdot)$, and if $\cZ = \R^d$, for instance, then $-\ell_{c}(\theta, \lambda, \cdot)$ is readily recognized as the infimal convolution of~$-\ell(\theta,\cdot)$ and~$\lambda \Psi(\cdot)$. Hence, it can be interpreted as an epigraphical regularization of $-\ell(\theta,\cdot)$. Indeed, the epigraph of the infimal convolution $-\ell_{c}(\theta, \lambda, \cdot)$ (essentially) coincides with the Minkowski sum of the epigraphs of $-\ell(\theta,\cdot)$ and $\lambda \Psi(\cdot)$ \cite[Exercise~1.28\,(a)]{rockafellar2009variational}. Epigraphical regularizations are ubiquitous in approximation theory, and it is known that, as~$\lambda$ tends to infinity, $-\ell_{c}(\theta, \lambda, \cdot)$ provides an increasingly accurate approximation for~$-\ell(\theta,\cdot)$ that inherits desirable regularity properties from~$\Psi(\cdot)$ such as uniform continuity or smoothness \citep{attouch1993approximation,attouch1989epigraphical,bougeard1991towards,penot1998proximal}. Specifically, if~$\Psi(\cdot) = \|\cdot\|^2$,  then $-\ell_{c}(\theta, \lambda, \cdot)$ reduces to the Moreau-Yosida regularization (or Moreau envelope) \citep[Chapter~1.G]{rockafellar2009variational}, which is at the heart of most proximal algorithms \citep{parikh2014proximal}. In addition, if~$\Psi(\cdot)= \|\cdot\|$, then $-\ell_{c}(\theta, \lambda, \cdot)$ reduces to the Pasch-Hausdorff envelope (or Lipschitz regularization) proposed in~\citep[Example~9.11]{rockafellar2009variational} and~\citep{hiriart1980extension}, which constitutes the largest $\lambda$-Lipschitz continuous function majorized by~$-\ell(\theta,\cdot)$.

This paper aims to develop a deeper understanding of optimal transport-based DRO problems with general transportation cost functions, to showcase different regularizing effects of robustification and to elucidate the connections between DRO and game theory. Specifically, we show that robustification is intimately related to various forms of variation and Lipschitz regularization even if the transportation cost function is not (some power of) a metric. We also derive conditions for the existence and the computability of a Nash equilibrium between the decision-maker and nature in the DRO problem~\eqref{eq:dro}. In addition, in the context of image classification problems, we demonstrate that nature's Nash strategy can be viewed as a distribution that is supported on remarkably deceptive adversarial examples ({\em i.e.}, images). Finally, we identify practically relevant classes of optimal transport-based DRO problems that can be addressed with efficient gradient descent algorithms even if the loss function or the transportation cost function are nonconvex (but not both at the same time). 

The main contributions of this paper can be summarized as follows.
\begin{enumerate}[label=\roman*.]
    \item {\bf Existence of Nash equilibria.} We establish weak conditions under which the DRO problem~\eqref{eq:dro}, viewed as a zero-sum game between the decision-maker and nature, admits a Nash equilibrium. 
    \item {\bf Computation of Nash equilibria.} We prove that the dual DRO problem, which is obtained from~\eqref{eq:dro} by interchanging the order of minimization and maximization, can be reduced to a finite convex program under the same set of conditions that already ensured that the primal DRO problem~\eqref{eq:dro} admits a finite convex reduction. We also show that any solutions of the primal and dual DRO problems represent Nash strategies of the decision-maker and nature, respectively. 
    \item {\bf Adversarial examples.} Every Nash strategy of nature constitutes a best response to a Nash strategy of the decision-maker, but not vice versa. In the context of an image classification task, we show experimentally that nature's Nash strategy as well as nature's best response to the decision-maker's Nash strategy, as computed by Gurobi, encodes an adversarial dataset. While the adversarial images implied by nature’s best response can only deceive an algorithm, the adversarial images implied by nature’s Nash strategy can even deceive a human.
    \item {\bf Higher-order variation and Lipschitz regularization.} We show that, under natural regularity conditions, the worst-case expected loss in~\eqref{eq:dro} is bounded above by the sum of the expected loss under the reference distribution and several regularization terms that penalize certain $L^p$-norms or Lipschitz moduli of the higher-order derivatives of the loss function. This result generalizes and unifies several existing results, which have revealed intimate connections between robustification and gradient regularization~\cite{gao2024wasserstein}, Hessian regularization~\cite{bartl2020robust} and Lipschitz regularization~\citep{esfahani2018data}.
    \item {\bf Numerical solution of nonconvex DROs.} By leveraging techniques from nonconvex optimization such as Toland's duality principle, we show that distributionally robust linear prediction models, which emerge in portfolio selection, regression, classification, newsvendor, linear inverse or phase retrieval problems, can sometimes be solved efficiently by gradient descent-type algorithms even if the loss function or the transportation cost function are nonconvex.
\end{enumerate}

The rest of this paper is organized as follows. In Section~\ref{section:nash} we investigate sufficient conditions for the existence and computability of Nash equilibria between the decision-maker and nature. In Section~\ref{section:regularization} we shed new light on the intimate relations between regularization and robustification and discuss the numerical solution of certain nonconvex DRO problems. Numerical results are reported in Section~\ref{section:numerical}.

\paragraph{Notation} 
We denote the inner product of two vectors $x, y \in \R^d$ by $\inner{x}{y}$, and for any norm $\| \cdot \|$ on~$\R^d$, we use $\| \cdot \|_*$ to denote its dual norm defined through $\| y \|_* = \sup \{ \inner{x}{y}: \| x \| \leq 1 \}$. The domain of a function $f:\R^d \to [-\infty, \infty] $ is defined as $\dom(f) = \{x \in \R^d: f(x) < \infty \}$. The function $f$ is proper if $f(x) > -\infty$ and $\dom(f) \neq \emptyset$. The convex conjugate of $f$ is defined as $f^*(y) = \sup_{x \in \R^d} \inner{y}{x} - f(x)$. A function $f$ is (positively) homogeneous of degree $p \geq 1$ if $f(\lambda x) = \lambda^p f(x)$ for any $x \in \R^d$ and $\lambda > 0$. The indicator function $\delta_C:\R^d\to[0,\infty]$ of an arbitrary set~$\cX\subseteq\R^d$ is defined through $\delta_\cX(x)=0$ if $x\in \cX$ and $\delta_\cX(x)=\infty$ if $x\not\in \cX$. The perspective function of a proper, convex and lower semicontinuous function $f:\R^d \to (-\infty, \infty]$ is defined through $f(x, \lambda) = \lambda f(x / \lambda)$ if $\lambda > 0$ and $f(x, \lambda) = \delta^*_{\dom(f^*)}(x)$ if $\lambda = 0$, where $\delta_{\dom(f^*)}$ is the indicator function of the set $\dom(f^*)$. By slight abuse of notation, however, we use $\lambda f(x / \lambda)$ to denote the perspective function $f(x, \lambda)$ for all $\lambda \geq 0$. The Lipschitz modulus of a function $f:\R^d\to (-\infty,\infty]$ with respect to a norm~$\|\cdot\|$ on $\R^d$ is defined as 
\[
    \lip(f)=\sup_{x,x'\in\dom(f)} \left\{ |f(x)-f(x')|/\|x-x'\| : x\neq x' \right\}.
\]
For any $n\in\N$, we set $[n] = \{1,\ldots,n\}$, and we denote the relative interior of $\cZ\subseteq \R^d$ by~$\rint(\cZ)$.

\section{Nash Equilibria in DRO}
\label{section:nash}
By construction, the DRO problem~\eqref{eq:dro} constitutes a zero-sum game between an agent, who chooses the decision $\theta\in\Theta$, and some fictitious adversary or `nature', who chooses the distribution $\Q\in\B_\eps(\hat{\P})$. 
In this section we will show that, under some mild technical conditions, any optimal solution~$\theta^\star$ that solves the primal DRO problem~\eqref{eq:dro} and any optimal distribution~$\Q^\star$ that solves the dual DRO problem
\begin{align}
    \label{eq:dual:dro}
    \sup_{\Q \in \B_{\eps}(\hat{\P})} \inf_{\theta \in \Theta} ~ \E_{Z\sim\Q} \left[ \ell(\theta, Z) \right]
\end{align}
form a Nash equilibrium. Thus, $\theta^\star$ and~$\Q^\star$ satisfy the saddle point condition
\begin{align}
    \label{eq:Nash}
    \E_{Z\sim\Q}\left[\ell(\theta^\star, Z)\right] \leq \E_{Z\sim\Q^\star}\left[\ell(\theta^\star, Z)\right] \leq \E_{Z\sim\Q^\star}\left[\ell(\theta, Z)\right]\quad \forall \theta \in \Theta,~\Q \in \B_\eps(\hat{\P}).
\end{align}
The existence of a Nash equilibrium implies that~\eqref{eq:dro} and~\eqref{eq:dual:dro} are strong duals (that is, they share the same optimal values) and that they are both solvable. However, strong duality does not imply the existence of a Nash equilibrium (that is, the optimal values are not necessarily attained).

While it is well known that the primal DRO problem has a wide range of applications, the practical relevance of the dual DRO problem is less obvious. Indeed, the dual problem models a situation in which the decision maker observes the distribution that governs random quantities. As we will explain below, however, the dual DRO problem has deep connections to robust statistics, machine learning and related fields. In robust statistics, an optimal solution $\theta^\star$ of an estimation problem of the form~\eqref{eq:dro} is referred to as a minimax estimator or a robust estimator. When $\theta^\star$ and $\Q^\star$ satisfy the saddle point condition~\eqref{eq:Nash}, then the robust estimator $\theta^\star$ solves the stochastic program $\min_{\theta \in \Theta}~\E_{Z\sim\Q^\star} \left[ \ell(\theta, Z) \right]$, which minimizes the expected loss under the {\em crisp} distribution $\Q^\star$; see also \citep[Chapter~5]{lehmann2006theory}. For this reason, $\Q^\star$ is often referred to as a least favorable distribution. The existence of $\Q^\star$ makes the robust estimator~$\theta^\star$ particularly attractive. Indeed, if $\Q^\star$ exists, then $\theta^\star$ solves a classical stochastic program akin to the ideal learning problem one would want to solve if the true distribution was known. 

Nash equilibria are also relevant for adversarial machine learning, which aims to immunize neural networks against perturbations of the input data \citep{szegedy2014intriguing,ian15adversarial,madry2018towards}. An important aspect of adversarial training is the generation of adversarial examples, which are perturbations of training samples designed to mislead the neural network into making false predictions. Given a fixed neural network encoded by~$\hat \theta \in \Theta$, one can construct adversarial examples by solving the worst-case expectation problem
\begin{align}
    \label{eq:wc}
    \sup_{\Q \in \B_\eps(\hat \P)} \, \E_{Z \sim \Q}[\ell(\hat \theta, Z)].
\end{align}
Specifically, adversarial examples can be obtained by sampling from an extremal distribution $\Q^\star$ that solves problem~\eqref{eq:wc}. 
A desirable property of adversarial examples is their transferability across models~\citep{szegedy2014intriguing}. An adversarial example that fools one model often succeeds in fooling other models, too, thus enabling attacks on machine learning systems without direct access to the target model~\citep{papernot2016transferability,papernot2016distillation}. Hence, incorporating transferable adversarial examples into training is likely to enhance the robustness and security of the resulting model.
The dual DRO problem~\eqref{eq:dual:dro} offers a systematic approach to generate {\em transferable} adversarial examples. Specifically, the solutions of  \eqref{eq:dual:dro} constitute model-agnostic worst-case distributions that depend only on the decision space~$\Theta$. In contrast, the worst-case distributions that solve problem~\eqref{eq:wc} are tailored to a specific model~$\hat\theta\in\Theta$. The solutions of~\eqref{eq:dual:dro} are designed to challenge {\em all} decisions within $\Theta$, thus making them inherently transferable across all possible prediction models.

To date, dual DRO problems have only been investigated in specific applications. For example, it is known that the least favorable distributions in distributionally robust minimum mean square error estimation and Kalman filtering problems over type-$2$ Wasserstein ambiguity sets centered at Gaussian reference distributions are Gaussian, and that they can be computed efficiently via semidefinite programming~\citep{shafieezadeh2018wasserstein, nguyen2023bridging}. When the ambiguity set is defined in terms of the Kullback-Leibler divergence instead of the Wasserstein distance, the least favorable distribution remains Gaussian and can be found in quasi-closed form~\citep{levy2004robust,levy2012robust}. Similar results are available for generalized $\tau$-divergence ambiguity sets~\citep{zorzi2017robustness,zorzi2016robust}. In addition, Nash equilibria for distributionally robust pricing and auction design problems with rectangular ambiguity sets can sometimes also be derived in closed form~\citep{bergemann2008pricing,koccyiugit2020distributionally,koccyiugit2021robust}.

We emphasize that, in contrast to the worst-case expectation problem~\eqref{eq:wc}, whose objective function is {\em linear} in the distribution~$\Q$, the dual DRO problem~\eqref{eq:dual:dro} maximizes a {\em concave} objective function representable as the pointwise infimum of infinitely many linear functions of~$\Q$. This makes the dual DRO problem considerably more challenging to solve.

In the remainder of this section, we will first identify mild regularity conditions under which the primal and dual DRO problems~\eqref{eq:dro} and~\eqref{eq:dual:dro} are strong duals and/or admit a Nash equilibrium  (Section~\ref{section:nash:existence}). Next, we will show that if the reference distribution is discrete and the loss function is convex-piecewise concave, then the dual DRO problem~\eqref{eq:dual:dro} can be reformulated as a finite convex program (Section~\ref{section:nash:computation}). This reformulation will finally enable us to construct a least favorable distribution.

\subsection{Existence of Nash Equilibria}
\label{section:nash:existence}
The following assumption is instrumental to prove the existence of a Nash equilibrium. In the remainder of the paper we equip $\cP(\cZ)$ with the topology of weak convergence of probability distributions~\cite{billingsley2013convergence}.

\begin{assumption}[Transportation cost] ~
    \label{assumption:cost}
    \begin{enumerate}[label=(\roman*)]
        \item \label{assumption:cost:integrable} There exists a reference point $\hat z_0 \in \cZ$ such that $\E_{Z\sim\hat{\P}}[c(Z, \hat z_0)] \leq C$ for some constant~$C < \infty$.
        \item \label{assumption:cost:lb} There exists a metric $d(z,\hat z)$ on $\cZ$ with compact sublevel sets such that $c(z, \hat z) \geq d^p(z, \hat z)$ for some exponent~$p\in\N$.
    \end{enumerate}
\end{assumption}

Assumption~\ref{assumption:cost} will enable us to prove that the ambiguity set $\B_\eps(\hat{\P})$ is weakly compact. We emphasize that Assumption~\ref{assumption:cost}\,\ref{assumption:cost:lb} is unrestrictive and allows for transportation costs that fail to display common properties such as symmetry, convexity, homogeneity, or the triangle inequality.

\begin{example}[Local Mahalanobis transportation cost]~
    \label{example:Mahalanobis}
    The local Mahalanobis transportation cost is defined as $c(z,\hat z) = \inner{z - \hat z}{A(\hat z)(z -\hat z)}$, where $A(\hat z)$ represents a positive definite matrix for each $\hat z \in \cZ$ \citep{blanchet2018optimal}. This transportation cost trivially satisfies Assumption~\ref{assumption:continuity}\,\ref{assumption:cost:lsc} if $A(\hat z)$ is continuous in~$\hat z$. In addition, it satisfies Assumption~\ref{assumption:cost}\,\ref{assumption:cost:integrable} if the reference distribution~$\hat{\P}$ has finite second moments, and it satisfies Assumption~\ref{assumption:cost}\,\ref{assumption:cost:lb} for $d(z,\hat z)  = \alpha \| z - \hat z \|_2$ and $p = 2$ if $\alpha = \inf_{\hat z \in \cZ} \lambda_{\min}(A(\hat z))>0$. However, the local Mahalanobis transportation cost fails to be symmetric. 
\end{example}

\begin{example}[Discrete metric]
    If the transportation cost is identified with the (nonconvex) discrete metric $c(z,\hat z) = \mathds{1}_{\{z \neq \hat z\}}$, then the optimal transport distance reduces to the total variation distance \cite[\textsection~6]{villani2008optimal}. In this case, Assumptions~\ref{assumption:continuity}\,\ref{assumption:cost:lsc} and~\ref{assumption:cost}\,\ref{assumption:cost:integrable} are trivially satisfied, and Assumption~\ref{assumption:cost}\,\ref{assumption:cost:lb} holds for $d(z,\hat z)=c(z, \hat z)$ and for any $p\geq 1$ provided that the support set $\cZ$ is compact.
\end{example}

The next lemma identifies minimal conditions under which~$\B_{\eps}(\hat{\P})$ is weakly compact. Thus, it generalizes~\citep[Theorem~1]{yue2020linear} to ambiguity sets defined in terms of general optimal transport discrepancies.

\begin{lemma}[Weak compactness of optimal transport ambiguity sets]
    \label{lemma:compactness}
    If the transportation cost satisfies Assumptions~\ref{assumption:continuity}\,\ref{assumption:cost:lsc} and~\ref{assumption:cost}, then the ambiguity set $\B_\eps(\hat{\P})$ is weakly compact.
\end{lemma}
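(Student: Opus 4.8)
The plan is to invoke Prokhorov's theorem. Since $\cZ$ is a closed subset of $\R^d$, it is a Polish space, so the weak topology on $\cP(\cZ)$ is metrizable; hence it suffices to show that $\B_\eps(\hat{\P})$ is (i) tight, and therefore relatively weakly compact, and (ii) sequentially weakly closed. I first note that $\B_\eps(\hat{\P})$ is non-empty because the diagonal coupling and the identity of indiscernibles certify $d_c(\hat{\P},\hat{\P})=0\le\eps$.

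\emph{Tightness.} Fix any $\Q\in\B_\eps(\hat{\P})$. By Assumption~\ref{assumption:continuity}\,\ref{assumption:cost:lsc}, the optimal transport problem defining $d_c(\Q,\hat{\P})$ is solvable \citep[Theorem~4.1]{villani2008optimal}, so there is a coupling $\pi\in\Pi(\Q,\hat{\P})$ with $\E_{(Z,\hat Z)\sim\pi}[c(Z,\hat Z)]=d_c(\Q,\hat{\P})\le\eps$. Using the triangle inequality for the metric $d$ of Assumption~\ref{assumption:cost}\,\ref{assumption:cost:lb}, the elementary bound $(a+b)^p\le 2^{p-1}(a^p+b^p)$, the pointwise inequality $d^p\le c$, and Assumption~\ref{assumption:cost}\,\ref{assumption:cost:integrable}, I would estimate
\begin{align*}
    \E_{Z\sim\Q}\big[d^p(Z,\hat z_0)\big] = \E_{(Z,\hat Z)\sim\pi}\big[d^p(Z,\hat z_0)\big] &\le 2^{p-1}\,\E_{(Z,\hat Z)\sim\pi}\big[d^p(Z,\hat Z) + d^p(\hat Z,\hat z_0)\big]\\
    &\le 2^{p-1}\big(\E_{(Z,\hat Z)\sim\pi}[c(Z,\hat Z)] + \E_{\hat Z\sim\hat{\P}}[c(\hat Z,\hat z_0)]\big)\le 2^{p-1}(\eps+C)=:M,
\end{align*}
which is a bound uniform in $\Q\in\B_\eps(\hat{\P})$. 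Markov's inequality then gives $\Q(\{z\in\cZ: d(z,\hat z_0)>r\})\le M/r^p$ for every $r>0$. Since the set $K_r=\{z\in\cZ: d(z,\hat z_0)\le r\}$ is compact by Assumption~\ref{assumption:cost}\,\ref{assumption:cost:lb}, choosing $r$ so that $M/r^p\le\delta$ yields $\Q(K_r)\ge 1-\delta$ for all $\Q\in\B_\eps(\hat{\P})$. Hence $\B_\eps(\hat{\P})$ is tight and, by Prokhorov's theorem, relatively weakly compact.

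\emph{Weak closedness.} It remains to show that $\Q\mapsto d_c(\Q,\hat{\P})$ is lower semicontinuous along weakly convergent sequences. Let $\Q_n\to\Q$ weakly with $d_c(\Q_n,\hat{\P})\le\eps$, and pick optimal couplings $\pi_n\in\Pi(\Q_n,\hat{\P})$. The sequence $\{\Q_n\}$ is tight (being weakly convergent) and $\{\hat{\P}\}$ is trivially tight, so $\{\pi_n\}$ is tight in $\cP(\cZ\times\cZ)$; by Prokhorov a subsequence $\pi_{n_k}$ converges weakly to some $\pi$ whose marginals are $\Q$ and $\hat{\P}$, i.e.\ $\pi\in\Pi(\Q,\hat{\P})$. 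Because $c$ is nonnegative and lower semicontinuous by Assumption~\ref{assumption:continuity}\,\ref{assumption:cost:lsc}, the functional $\rho\mapsto\E_\rho[c]$ is lower semicontinuous under weak convergence, whence
\begin{equation*}
    d_c(\Q,\hat{\P})\le \E_{\pi}[c] \le \liminf_{k\to\infty}\E_{\pi_{n_k}}[c]=\liminf_{k\to\infty} d_c(\Q_{n_k},\hat{\P})\le\eps.
\end{equation*}
Thus $\Q\in\B_\eps(\hat{\P})$, so $\B_\eps(\hat{\P})$ is weakly closed. Combining this with relative weak compactness proves the claim.

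\emph{Main obstacle.} The only delicate point is the tightness estimate, since $c$ itself possesses none of the usual metric-like properties (symmetry, triangle inequality, homogeneity) that one would normally exploit. The remedy is to descend to the metric lower bound $d^p\le c$ furnished by Assumption~\ref{assumption:cost}\,\ref{assumption:cost:lb}: the triangle inequality for $d$ propagates the transportation-cost budget on $\pi$ and the moment bound on $\hat{\P}$ (Assumption~\ref{assumption:cost}\,\ref{assumption:cost:integrable}) into a uniform $p$-th moment bound on $\Q$ centered at $\hat z_0$, after which compactness of the sublevel sets of $d(\cdot,\hat z_0)$ closes the argument. The weak closedness step is a routine invocation of the stability of optimal transport under weak convergence with a bounded-below, lower semicontinuous cost, and does not require any metric structure.
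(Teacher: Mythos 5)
Your proof is correct and follows essentially the same two-step strategy as the paper: show that $\B_\eps(\hat{\P})$ sits inside a relatively weakly compact family by propagating the metric lower bound $d^p \le c$ and the moment condition on $\hat{\P}$ into a uniform $p$-th moment bound, then show it is weakly closed using the weak lower semicontinuity of $\Q \mapsto d_c(\Q,\hat{\P})$. The only difference is that you prove the two auxiliary facts from scratch (tightness via Markov's inequality and Prokhorov, and lower semicontinuity via stability of optimal transport), whereas the paper delegates them to \citep[Theorem~1]{yue2020linear} and \citep[Lemma~5.2]{clement2008wasserstein}.
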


\begin{proof}
    Throughout the proof we use~$W_p$ to denote the $p$-th Wasserstein distance with respect to the metric~$d$ from Assumption~\ref{assumption:cost}\,\ref{assumption:cost:lb}. Specifically, for any probability distributions $\Q$ and $\hat{\P}$ on~$\cZ$ we~set
    \begin{align*}
        \textstyle W_p(\Q, \hat{\P}) = \left( \inf_{\pi \in \Pi(\Q, \hat{\P})} \E_{(Z,\hat Z)\sim\pi} [d(Z, \hat Z)^p] \right)^{1/p}.
    \end{align*}
    In addition, we define the Wasserstein space $\cW_p(\cZ) \subseteq \cP(\cZ)$ as the family of all probability distributions~$\Q$ on~$\cZ$ with $\E_{Z\sim\Q} [d(Z,\hat z_0)^p]<\infty$, where~$\hat z_0\in \cZ$ is the reference point from Assumption~\ref{assumption:cost}\,\ref{assumption:cost:integrable}. By using the triangle inequality for the metric~$d$, one can show that the Wasserstein space is in fact independent of the choice of~$\hat z_0$. 
    Next, we define the $p$-th Wasserstein ball of radius~$\eps\ge 0$ around $\hat{\P}$ as
    \begin{align*}
        \W_{\eps} (\hat{\P}) = \left\{ \Q \in \cP(\cZ): W_p ( \Q , \hat{\P} ) \leq \eps \right\}. 
    \end{align*}
    Assumptions~\ref{assumption:cost}\,\ref{assumption:cost:integrable} and~\ref{assumption:cost}\,\ref{assumption:cost:lb} imply that $\hat{\P} \in \cW_p(\cZ)$, which in turn implies via the triangle inequality for the $p$-th Wasserstein distance $W_p$ that $\W_\eps (\hat{\P})\subseteq \cW_p(\cZ)$; see also~\cite[Lemma~1]{yue2020linear}. Assumption~\ref{assumption:cost}\,\ref{assumption:cost:lb} further ensures that $d_c \geq W_p^p$, and thus the ambiguity set $\B_{\eps}(\hat{\P})$ constructed from the optimal transport discrepancy~$d_c$ is covered by the ambiguity set $\W_{\eps^{1/p}}(\hat{\P})$ constructed from the Wasserstein distance~$W_p$. 
    
    In order to prove that $\B_{\eps}(\hat{\P})$ is weakly compact, note first that the Wasserstein ball $\W_{\eps^{1/p}}(\hat{\P})$ is weakly sequentially compact thanks to \citep[Theorem~1]{yue2020linear}. As the notions of sequential compactness and compactness are equivalent in metric spaces \cite[Theorem~28.2]{munkres2000topology} and as the weak topology on~$\cP(\cZ)$ is metrized by the Prokhorov metric \cite[Theorem~6.8]{billingsley2013convergence}, $\W_{\eps^{1/p}}(\hat{\P})$ is also weakly compact. Next, recall that the transportation cost $c$ is nonnegative and lower semicontinuous by virtue of Assumption~\ref{assumption:continuity}\,\ref{assumption:cost:lsc}. This implies via \citep[Lemma~5.2]{clement2008wasserstein} that $d_c(\cdot, \hat{\P})$ is lower semicontinuous with respect to the weak topology on~$\cP(\cZ)$. Hence, $\W_{\eps^{1/p}}(\hat{\P})$ is weakly closed as a sublevel set of a weakly lower semicontinuous map. As weakly closed subsets of weakly compact sets are weakly compact, $\B_{\eps}(\hat{\P})$ is indeed weakly compact. 
\end{proof}

To prove the existence of Nash equilibria, we also need the following assumption on the loss function.

\begin{assumption}[Loss function] ~
    \label{assumption:loss}
    \begin{enumerate}[label=(\roman*)]
        \item \label{assumption:loss:lsc} For any $z \in \cZ$, the function $\ell(\theta,z)$ is lower semicontinuous in~$\theta$.
        \item \label{assumption:loss:Fatou} For any $\Q \in \B_{\varepsilon}(\hat{\P})$ and any $\bar{\theta} \in \Theta$, there exists a neighborhood $\cU$ of $\bar{\theta}$ such that the family of functions $\ell^-(\theta, z) = -\min \{0, \ell(\theta, z)\}$ parametrized by $\theta \in \cU$ is uniformly $\Q$-integrable in $z$.
        \item \label{assumption:loss:infcompact} There exists $\Q \in \B_\eps(\hat \P)$ such that $\E_{Z \sim \Q} [\ell(\theta, Z)]$ is inf-compact in~$\theta\in\Theta$. This means that the sublevel set $\{ \theta \in \Theta: \E_{Z \sim \Q} [\ell(\theta, Z)] \leq c \}$ is compact for every $c \in \R$.
        \item \label{assumption:loss:growth} For any $\theta \in \Theta$, there are $g > 0$, $\hat z_0 \in \cZ$ and $r \in (0,p)$ with $\ell(\theta,z)\leq g\left[1+d^r(z,\hat z_0)\right]$ for all $z \in \cZ$, where~$d$ and~$p$ are the metric and the exponent from Assumption~\ref{assumption:cost}\,\ref{assumption:cost:lb}, respectively.
    \end{enumerate}
\end{assumption}
As we will see below, Assumptions~\ref{assumption:loss}\,\ref{assumption:loss:lsc} and~\ref{assumption:loss}\,\ref{assumption:loss:Fatou} imply that $\E_{Z\sim\Q}\left[\ell(\theta, Z)\right]$ is lower semicontinuous in~$\theta$, while Assumptions~\ref{assumption:continuity}\,\ref{assumption:loss:usc} and~\ref{assumption:loss}\,\ref{assumption:loss:growth} imply that $\E_{Z\sim\Q}\left[\ell(\theta, Z)\right]$ is weakly upper semicontinuous in $\Q$. Note that Assumption~\ref{assumption:loss}\,\ref{assumption:loss:Fatou} trivially holds for non-negative loss functions. However, it also holds under the following two conditions, which are typically easy to check. First, $\B_{\varepsilon}(\hat{\P})$ is contained in a $p$-th Wasserstein ball around a nominal distribution~$\hat\P$ with finite $p$-th moments. This implies via \citep[Lemma~1]{yue2020linear} that {\em all} distributions $\Q\in \B_{\varepsilon}(\hat{\P})$ have finite $p$-th moments. Second, for any~$\bar\theta \in \Theta$, there exist $g > 0$, a reference point $\hat z_0 \in \cZ$ and a neighborhood~$\cU$ of~$\bar\theta$ such that $\ell(\theta,z)\geq -g\left[1+d^p(z,\hat z_0)\right]$ for all~$\theta\in\cU$ and~$z \in \cZ$. As for the inf-compactness condition in Assumption~\ref{assumption:loss}\,\ref{assumption:loss:infcompact}, suppose that $\E_{Z\sim\Q}\left[\ell(\theta, Z)\right]$ is lower semicontinuous in~$\theta$. In this case, Assumption~\ref{assumption:loss}\,\ref{assumption:loss:infcompact} is trivially satisfied if~$\Theta$ is compact. However, it also holds if $\E_{Z \sim \Q} [\ell(\theta, Z)]$ is coercive in~$\theta$, that is, if for every sequence $\{ \theta_n \}_{n \in \N}$ with $\lim_{n \to \infty} \| \theta_n \| = \infty$, we have $\lim_{n \to \infty} \E_{Z \sim \Q} [\ell(\theta, Z)] = \infty$. In practice, it is often convenient to verify the inf-compactness condition under the nominal distribution~$\Q=\hat \P$. 

\begin{lemma}[Continuity properties of the expected loss]
    \label{lemma:continuity}
    The following hold. 
    \begin{enumerate}[label=(\roman*)]
        \item \label{lemma:continuity:lsc} If Assumptions~\ref{assumption:loss}\,\ref{assumption:loss:lsc} and~\ref{assumption:loss}\,\ref{assumption:loss:Fatou} hold, then $\E_{Z\sim\Q}\left[\ell(\theta, Z)\right]$ is lower semicontinuous in~$\theta$ on $\Theta$.
        \item \label{lemma:continuity:usc} If Assumptions~\ref{assumption:continuity}\,\ref{assumption:loss:usc},~\ref{assumption:cost} and~\ref{assumption:loss}\,\ref{assumption:loss:growth} hold, then $\E_{Z\sim\Q}\left[\ell(\theta, Z)\right]$ is weakly upper semicontinuous in~$\Q$ on $\B_\eps(\hat{\P})$.
    \end{enumerate}
\end{lemma}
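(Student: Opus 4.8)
The plan is to establish the lower and upper semicontinuity claims separately. In both cases it suffices to argue along convergent sequences: $\Theta\subseteq\R^m$ is a metric space, and (since $\B_\eps(\hat{\P})$ is weakly closed by Lemma~\ref{lemma:compactness}) its relative weak topology is metrized by the Prokhorov metric \citep{billingsley2013convergence,munkres2000topology}, so sequential semicontinuity coincides with the topological notion.

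For the lower semicontinuity claim, I would fix $\Q\in\B_\eps(\hat{\P})$, a point $\bar{\theta}\in\Theta$ and a sequence $\theta_n\to\bar{\theta}$ in $\Theta$, and split $\ell=\ell^+-\ell^-$ with $\ell^+=\max\{0,\ell\}$ and $\ell^-=-\min\{0,\ell\}$. Assumption~\ref{assumption:loss}\,\ref{assumption:loss:lsc} gives $\liminf_n\ell(\theta_n,z)\ge\ell(\bar{\theta},z)$ for every $z$, hence $\liminf_n\ell^+(\theta_n,z)\ge\ell^+(\bar{\theta},z)$ and $\limsup_n\ell^-(\theta_n,z)\le\ell^-(\bar{\theta},z)$ because $t\mapsto\max\{0,t\}$ is continuous and nondecreasing. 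The ordinary Fatou lemma applied to the nonnegative functions $\ell^+(\theta_n,\cdot)$ yields $\liminf_n\E_\Q[\ell^+(\theta_n,Z)]\ge\E_\Q[\ell^+(\bar{\theta},Z)]$. For the negative parts, Assumption~\ref{assumption:loss}\,\ref{assumption:loss:Fatou} furnishes a neighborhood $\cU$ of $\bar{\theta}$ on which $\{\ell^-(\theta,\cdot)\}_{\theta\in\cU}$ is uniformly $\Q$-integrable; since $\theta_n\in\cU$ for all large~$n$, I would invoke a reverse Fatou lemma for uniformly integrable families to get $\limsup_n\E_\Q[\ell^-(\theta_n,Z)]\le\E_\Q[\ell^-(\bar{\theta},Z)]$. (A self-contained version: $g_n:=(\ell^-(\theta_n,\cdot)-\ell^-(\bar{\theta},\cdot))^+\to0$ $\Q$-a.s., is uniformly $\Q$-integrable since $g_n\le\ell^-(\theta_n,\cdot)$, hence $\E_\Q[g_n]\to0$ by Vitali's theorem, and $\ell^-(\theta_n,\cdot)\le\ell^-(\bar{\theta},\cdot)+g_n$ gives the bound, using that $\ell^-(\bar{\theta},\cdot)$ is $\Q$-integrable as a member of the family.) Since $\ell^-(\bar{\theta},\cdot)$ is $\Q$-integrable, $\E_\Q[\ell(\bar{\theta},Z)]$ is well defined in $(-\infty,\infty]$, and subtracting the two bounds yields $\liminf_n\E_\Q[\ell(\theta_n,Z)]\ge\E_\Q[\ell(\bar{\theta},Z)]$.

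For the upper semicontinuity claim, I would fix $\theta\in\Theta$ and a weakly convergent sequence $\Q_n\to\Q$ inside $\B_\eps(\hat{\P})$. As established in the proof of Lemma~\ref{lemma:compactness}, Assumption~\ref{assumption:cost} yields $\B_\eps(\hat{\P})\subseteq\W_{\eps^{1/p}}(\hat{\P})\subseteq\cW_p(\cZ)$, and the triangle inequality for $W_p$ (using the unit mass at the reference point as an intermediate point) combined with $\E_{\hat{\P}}[d^p(Z,\hat z_0)]\le\E_{\hat{\P}}[c(Z,\hat z_0)]\le C$ gives a finite uniform bound $M:=\sup_{\Q'\in\B_\eps(\hat{\P})}\E_{\Q'}[d^p(Z,\hat z_0)]<\infty$, where independence from the precise reference point follows from the triangle inequality for $d$. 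Because $r\in(0,p)$ in Assumption~\ref{assumption:loss}\,\ref{assumption:loss:growth}, the estimate $\E_{\Q'}[d^r(Z,\hat z_0)\,\mathds{1}_{\{d(Z,\hat z_0)>t\}}]\le t^{r-p}M$ shows that the continuous nonnegative function $\phi(z):=g[1+d^r(z,\hat z_0)]$ is uniformly $\Q'$-integrable over $\Q'\in\B_\eps(\hat{\P})$, and it dominates $\ell(\theta,\cdot)$ by Assumption~\ref{assumption:loss}\,\ref{assumption:loss:growth}. Now write $\ell(\theta,\cdot)=\tilde\ell+\phi$ with $\tilde\ell:=\ell(\theta,\cdot)-\phi$. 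The function $\tilde\ell$ is upper semicontinuous (Assumption~\ref{assumption:continuity}\,\ref{assumption:loss:usc} minus a continuous function) and bounded above by~$0$, so the Portmanteau theorem yields $\limsup_n\E_{\Q_n}[\tilde\ell(Z)]\le\E_\Q[\tilde\ell(Z)]$, while weak convergence together with the uniform $\Q_n$-integrability of the continuous function $\phi$ forces $\E_{\Q_n}[\phi(Z)]\to\E_\Q[\phi(Z)]<\infty$ \citep{billingsley2013convergence}. Adding the two relations — and noting the case $\E_\Q[\tilde\ell(Z)]=-\infty$ is harmless, since it forces both sides to be $-\infty$ — gives $\limsup_n\E_{\Q_n}[\ell(\theta,Z)]\le\E_\Q[\ell(\theta,Z)]$, as desired.

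The easy assertion is the lower semicontinuity claim; the real work is in the upper semicontinuity claim, where $\ell$ may be unbounded above (controlled only by the $r$-th moment of $d$) and below. The device that makes the argument go through is the splitting $\ell(\theta,\cdot)=\tilde\ell+\phi$ with $\tilde\ell\le0$, which reduces everything to a textbook application of the Portmanteau theorem plus a moment-convergence lemma; the step that genuinely requires care is establishing uniform $\Q'$-integrability of $d^r(\cdot,\hat z_0)$ over the \emph{entire} ambiguity set, which hinges on the uniform $p$-th moment bound inherited from the Wasserstein-ball containment of Lemma~\ref{lemma:compactness} and, decisively, on the strict inequality $r<p$ in Assumption~\ref{assumption:loss}\,\ref{assumption:loss:growth}.
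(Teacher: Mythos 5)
Part~\ref{lemma:continuity:lsc} of your argument is correct and is, in substance, the paper's own proof: the paper simply invokes ``Fatou's lemma, which applies thanks to Assumption~\ref{assumption:loss}\,\ref{assumption:loss:Fatou}'', and your positive/negative-part split with the Vitali-type argument is exactly the justification of that one-line invocation. For part~\ref{lemma:continuity:usc} the paper instead defers to the proof of \citep[Theorem~3]{yue2020linear}; your attempt to make this step self-contained is in the right spirit, and the preparatory work --- the uniform bound $\sup_{\Q' \in \B_\eps(\hat{\P})} \E_{\Q'}[d^p(Z,\hat z_0)] < \infty$ via the triangle inequality for $W_p$, and the uniform integrability of $d^r(\cdot,\hat z_0)$ over the ambiguity set, which hinges on the strict inequality $r<p$ --- is correct and is precisely the mechanism that proof relies on.

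There is, however, a gap in the final step of part~\ref{lemma:continuity:usc}: you declare $\phi(z) = g[1+d^r(z,\hat z_0)]$ to be \emph{continuous} and conclude $\E_{\Q_n}[\phi] \to \E_{\Q}[\phi]$ from weak convergence plus uniform integrability. Nothing in the assumptions makes $d(\cdot,\hat z_0)$ continuous with respect to the Euclidean topology on $\cZ$, which is the topology underlying weak convergence: Assumption~\ref{assumption:cost}\,\ref{assumption:cost:lb} only yields compact, hence closed, sublevel sets, i.e.\ lower semicontinuity of $d(\cdot,\hat z_0)$, and the paper's own discrete-metric example is an admissible $d$ that is not continuous. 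For a merely lower semicontinuous $\phi$ the Portmanteau theorem gives $\liminf_n \E_{\Q_n}[\phi] \geq \E_{\Q}[\phi]$, which is the wrong direction; concretely, with $d$ the discrete metric and $z_n \to \hat z_0$, $z_n \neq \hat z_0$, one has $\delta_{z_n} \to \delta_{\hat z_0}$ weakly but $\E_{\delta_{z_n}}[\phi] = 2g \not\to g = \E_{\delta_{\hat z_0}}[\phi]$, so the additive decomposition $\ell(\theta,\cdot) = \tilde\ell + \phi$ does not deliver $\limsup_n \E_{\Q_n}[\ell(\theta,Z)] \leq \E_{\Q}[\ell(\theta,Z)]$. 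The repair is standard and uses only what you already established: bound $\ell(\theta,\cdot) \leq \min\{\ell(\theta,\cdot),t\} + \phi\, \mathds{1}_{\{\phi > t\}}$, apply the Portmanteau theorem to the upper semicontinuous, bounded-above truncation $\min\{\ell(\theta,\cdot),t\} \leq \ell(\theta,\cdot)$, and then let $t \to \infty$ using your uniform tail estimate $\sup_{\Q' \in \B_\eps(\hat{\P})} \E_{\Q'}[\phi\, \mathds{1}_{\{\phi > t\}}] \to 0$; this requires no regularity of $\phi$ beyond measurability. (If you are content to assume $d$ continuous --- e.g.\ norm-induced --- your argument is complete as written, but that is strictly less general than the lemma as stated.)
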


\begin{proof}
    Fix $\Q \in \B_\eps(\hat \P)$, $\theta\in\Theta$ and a sequence $\{\theta_n\}_{n\in\N}$ in~$\Theta$ converging to~$\theta$. Then, we have
    \begin{align*}
    \liminf_{\theta_n \rightarrow \theta} \E_{Z\sim\Q}\left[\ell(\theta_n, Z)\right] \geq \E_{Z\sim\Q}\left[\liminf_{\theta_n \rightarrow \theta} \ell(\theta_n, Z)\right] \geq \E_{Z\sim\Q}\left[\ell(\theta, Z)\right],
    \end{align*}
    where the two inequalities follow from Fatou's lemma, which applies thanks to Assumption~\ref{assumption:loss} \ref{assumption:loss:Fatou}, and from the lower semi-continuity of the loss function $\ell(\theta, z)$ in $\theta$ stipulated in Assumption~\ref{assumption:loss} \ref{assumption:loss:lsc}, respectively. This shows that $\E_{Z\sim\Q}\left[\ell(\theta, Z)\right]$ is lower semicontinuous in $\theta$, and thus assertion~\ref{lemma:continuity:lsc} follows.
    
    Next, define the Wasserstein space $\cW_p(\cZ)$ and the Wasserstein ball~$\W_{\eps^{1/p}}(\hat{\P})$ as in the proof of Lemma~\ref{lemma:compactness}. Recall also that $\hat{\P} \in \cW_p(\cZ)$ thanks to Assumptions~\ref{assumption:cost}\,\ref{assumption:cost:integrable} and~\ref{assumption:cost}\,\ref{assumption:cost:lb}. The proof of \citep[Theorem~3]{yue2020linear} then implies that $\E_{Z\sim\Q} [ \ell(\theta, Z) ]$ is weakly upper semicontinuous in~$\Q$ over~$\W_{\eps^{1/p}}(\hat{\P})$. This is true even if the sublevel sets of the metric~$d$ fail to be compact, meaning that Assumption~\ref{assumption:cost}\,\ref{assumption:cost:lb} could be relaxed. As~$\B_{\eps}(\hat{\P}) \subseteq \W_{\eps^{1/p}}(\hat{\P})$ by virtue of Assumption~\ref{assumption:cost}\,\ref{assumption:cost:lb}, we may thus conclude that $\E_{Z\sim\Q} [ \ell(\theta, Z) ]$ is weakly upper semicontinuous in~$\Q$ over $\B_{\eps}(\hat{\P})$. Hence, assertion~\ref{lemma:continuity:usc} follows. 
\end{proof}

The existence of a Nash equilibrium also relies on the following mild regularity conditions.

\begin{assumption}[Properness conditions]~ \label{assumption:regularity}
    \begin{enumerate}[label=(\roman*)]
        \item \label{assumption:regularity:infty}
        There exists $\theta \in \Theta$ with $\sup_{\Q \in \B_\eps(\hat \P)} \E_{Z\sim\Q}[\ell(\theta, Z)] < +\infty$.
        \item \label{assumption:regularity:minus:infty} There exists $\Q \in \B_\eps(\hat \P)$ with $\inf_{\theta \in \Theta } \E_{Z\sim\Q}[\ell(\theta, Z)] > -\infty$.
    \end{enumerate}
\end{assumption}

Assumption~\ref{assumption:regularity}\,\ref{assumption:regularity:infty} ensures that the optimal value of the {\em primal} DRO problem~\eqref{eq:dro} is strictly smaller than $+\infty$, whereas Assumption~\ref{assumption:regularity}\,\ref{assumption:regularity:minus:infty} ensures that the optimal value of the {\em dual} DRO problem~\eqref{eq:dual:dro} is strictly larger than $-\infty$. Note that any DRO problem that violates these conditions is pathological and of limited practical interest. We are now ready to state the main result of this section.

\begin{theorem}[Minimax theorem]
    \label{theorem:minimax}
    Suppose that the feasible set $\Theta$ is convex and that the loss function~$\ell(\theta,z)$ is convex in~$\theta$ for any fixed~$z \in \cZ$. 
    \begin{enumerate}[label=(\roman*)]
        \item \label{theorem:minimax:minsup} If Assumptions~\ref{assumption:loss}\,\ref{assumption:loss:lsc}-\ref{assumption:loss:infcompact} and~\ref{assumption:regularity}\,\ref{assumption:regularity:infty} hold, then we have
        \begin{align*}
            \min_{\theta \in \Theta} \sup_{\Q \in \B_{\eps}(\hat{\P})} ~ \E_{Z\sim\Q} \left[ \ell(\theta, Z) \right]  = \sup_{\Q \in \B_{\eps}(\hat{\P})} \inf_{\theta \in \Theta} ~ \E_{Z\sim\Q} \left[ \ell(\theta, Z) \right].
        \end{align*}
        \item \label{theorem:minimax:maxinf}If Assumptions~\ref{assumption:continuity}, \ref{assumption:cost}, \ref{assumption:loss}\,\ref{assumption:loss:growth} and \ref{assumption:regularity}\,\ref{assumption:regularity:minus:infty} hold, then we have
        \begin{align*}
            \inf_{\theta \in \Theta} \sup_{\Q \in \B_{\eps}(\hat{\P})} ~ \E_{Z\sim\Q} \left[ \ell(\theta, Z) \right]  = \max_{\Q \in \B_{\eps}(\hat{\P})} \inf_{\theta \in \Theta} ~ \E_{Z\sim\Q} \left[ \ell(\theta, Z) \right].
        \end{align*}
        \item \label{theorem:minimax:minmax}If Assumptions~\ref{assumption:continuity}, \ref{assumption:cost}, \ref{assumption:loss}, \ref{assumption:regularity} hold, then we have
        \begin{align*}
            \min_{\theta \in \Theta} \sup_{\Q \in \B_{\eps}(\hat{\P})} ~ \E_{Z\sim\Q} \left[ \ell(\theta, Z) \right]  = \max_{\Q \in \B_{\eps}(\hat{\P})} \inf_{\theta \in \Theta} ~ \E_{Z\sim\Q} \left[ \ell(\theta, Z) \right].
        \end{align*}
    \end{enumerate}
\end{theorem}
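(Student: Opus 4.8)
Write $\Phi(\theta,\Q)=\E_{Z\sim\Q}[\ell(\theta,Z)]$. The plan combines three elementary structural facts with the Kneser--Fan minimax theorem; I would establish the easier claim~\ref{theorem:minimax:maxinf} first, then~\ref{theorem:minimax:minsup}, and read off~\ref{theorem:minimax:minmax}. The structural facts are: (a)~$\Phi$ is convex in~$\theta$ (the loss $\ell(\cdot,z)$ is convex and the expectation preserves convexity) and \emph{affine}---hence both convex and concave---in~$\Q$, while $\B_\eps(\hat\P)$ is convex because a convex combination of transportation plans is again a transportation plan, so $d_c(\cdot,\hat\P)$ is convex; (b)~under Assumptions~\ref{assumption:loss}\,\ref{assumption:loss:lsc} and~\ref{assumption:loss}\,\ref{assumption:loss:Fatou}, Lemma~\ref{lemma:continuity}\,\ref{lemma:continuity:lsc} makes $\theta\mapsto\Phi(\theta,\Q)$ lower semicontinuous for each~$\Q\in\B_\eps(\hat\P)$, and uniform integrability of~$\ell^-$ makes it $>-\infty$ everywhere; (c)~under Assumptions~\ref{assumption:continuity}\,\ref{assumption:cost:lsc} and~\ref{assumption:cost}, Lemma~\ref{lemma:compactness} makes $\B_\eps(\hat\P)$ weakly compact, and if additionally Assumptions~\ref{assumption:continuity}\,\ref{assumption:loss:usc} and~\ref{assumption:loss}\,\ref{assumption:loss:growth} hold, Lemma~\ref{lemma:continuity}\,\ref{lemma:continuity:usc} makes $\Q\mapsto\Phi(\theta,\Q)$ weakly upper semicontinuous for each~$\theta$. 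The minimax tool is: if $h$ is convex on a convex set~$A$ and concave and upper semicontinuous on a \emph{compact} convex set~$B$, then $\inf_{a\in A}\max_{b\in B}h(a,b)=\max_{b\in B}\inf_{a\in A}h(a,b)$; no semicontinuity is needed on the noncompact side~$A$.

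\paragraph{Part~(ii).} Here the compactness sits on nature's side. I would apply the minimax tool with $A=\Theta$ and $B=\B_\eps(\hat\P)$: the latter is convex and weakly compact (Lemma~\ref{lemma:compactness}), $\Phi(\theta,\cdot)$ is concave (affine) and weakly upper semicontinuous (Lemma~\ref{lemma:continuity}\,\ref{lemma:continuity:usc}), and $\Phi(\cdot,\Q)$ is convex on~$\Theta$. Hence $\inf_\theta\sup_\Q\Phi=\inf_\theta\max_\Q\Phi=\max_\Q\inf_\theta\Phi$, the inner supremum being attained because an upper semicontinuous function attains its maximum over a compact set. Assumption~\ref{assumption:regularity}\,\ref{assumption:regularity:minus:infty} rules out the value~$-\infty$, while finiteness from above holds since $\inf_\theta\Phi(\theta,\Q)\le\Phi(\bar\theta,\Q)<\infty$ for any fixed~$\bar\theta$ and any $\Q\in\B_\eps(\hat\P)$, using Assumption~\ref{assumption:loss}\,\ref{assumption:loss:growth} and the finiteness of $p$-th moments of members of~$\B_\eps(\hat\P)$.

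\paragraph{Part~(i).} Now the compactness must be extracted from the decision-maker's side, and since~$\Theta$ need not be compact this is where the work lies. I would first observe that $g(\theta):=\sup_\Q\Phi(\theta,\Q)$ is convex (a supremum of convex functions), lower semicontinuous (a supremum of lower semicontinuous functions, by Lemma~\ref{lemma:continuity}\,\ref{lemma:continuity:lsc}), and inf-compact (its sublevel sets sit inside those of~$\Phi(\cdot,\Q_0)$, which are compact by Assumption~\ref{assumption:loss}\,\ref{assumption:loss:infcompact}), whereas Assumption~\ref{assumption:regularity}\,\ref{assumption:regularity:infty} makes it proper; hence $g$ attains its infimum over~$\Theta$---the ``$\min$''---at some~$\theta^\star$ with finite value~$v^\star$. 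Weak duality gives $v:=\sup_\Q\inf_\theta\Phi\le v^\star$, and $v$ is finite since $v\ge\inf_\theta\Phi(\theta,\Q_0)>-\infty$ and $v\le v^\star<\infty$. It remains to prove $v^\star\le v$, i.e.\ to exhibit~$\bar\theta$ with $g(\bar\theta)\le v$, i.e.\ a point of $\bigcap_{\Q\in\B_\eps(\hat\P)}S_\Q$ where $S_\Q:=\{\theta\in\Theta:\Phi(\theta,\Q)\le v\}$ is closed and convex. Since $\Phi(\cdot,\Q_0)$ is inf-compact and $\inf_\theta\Phi(\cdot,\Q_0)\le v$, the set~$S_{\Q_0}$ is nonempty and compact, so by the finite-intersection property it suffices to show that any finite subcollection $S_{\Q_0},\dots,S_{\Q_n}$ has a common point, equivalently that $\min_\theta\max_{0\le j\le n}\Phi(\theta,\Q_j)\le v$. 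Here the affine dependence on~$\Q$ is decisive: for~$\mu$ in the unit simplex~$\Delta$ set $\Q_\mu=\sum_j\mu_j\Q_j\in\B_\eps(\hat\P)$, so that $\max_{\mu\in\Delta}\Phi(\theta,\Q_\mu)=\max_{0\le j\le n}\Phi(\theta,\Q_j)$ (a linear function over a simplex is maximized at a vertex), and apply the minimax tool once more with $A=\Theta$ (convex, $\Phi(\cdot,\Q_\mu)$ convex) and $B=\Delta$ (compact convex, $\mu\mapsto\Phi(\theta,\Q_\mu)$ affine hence concave and continuous); this yields $\min_\theta\max_{0\le j\le n}\Phi(\theta,\Q_j)=\max_{\mu\in\Delta}\inf_\theta\Phi(\theta,\Q_\mu)\le v$, the last step because each~$\Q_\mu$ lies in~$\B_\eps(\hat\P)$ and $v=\sup_{\Q\in\B_\eps(\hat\P)}\inf_\theta\Phi(\theta,\Q)$. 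Thus $\bigcap_\Q S_\Q\ne\emptyset$, so $v^\star\le v$ and $v^\star=v$. (Routine integrability bookkeeping, e.g.\ when $\Phi(\theta,\Q_j)=+\infty$ along some directions, is absorbed by passing to effective domains and omitted here.)

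\paragraph{Part~(iii).} Under Assumptions~\ref{assumption:continuity}, \ref{assumption:cost}, \ref{assumption:loss} and~\ref{assumption:regularity} both previous parts apply: Part~\ref{theorem:minimax:minsup} gives $\min_\theta\sup_\Q\Phi=\sup_\Q\inf_\theta\Phi$ with the outer infimum attained, and Part~\ref{theorem:minimax:maxinf} gives $\inf_\theta\sup_\Q\Phi=\max_\Q\inf_\theta\Phi$ with the outer supremum attained; reading the two identities together gives $\min_\theta\sup_\Q\Phi=\max_\Q\inf_\theta\Phi$. The single genuinely delicate ingredient is the noncompactness of~$\Theta$ in Part~\ref{theorem:minimax:minsup}, resolved by (a)~turning inf-compactness of~$\Phi(\cdot,\Q_0)$ into inf-compactness of $\theta\mapsto\sup_\Q\Phi(\theta,\Q)$, so the outer minimization is well posed, and (b)~reducing the ``common point'' claim to a minimax over~$\Theta$ and a \emph{compact} simplex, which works only because~$\Phi$ is affine in~$\Q$; Part~\ref{theorem:minimax:maxinf} is then an immediate application of Kneser--Fan.
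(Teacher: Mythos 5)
Your proof is correct and, for parts~\ref{theorem:minimax:maxinf} and~\ref{theorem:minimax:minmax}, essentially coincides with the paper's: the paper also establishes convexity/affinity of the payoff and convexity of $\B_\eps(\hat\P)$, invokes Lemmas~\ref{lemma:compactness} and~\ref{lemma:continuity}\,\ref{lemma:continuity:usc}, and then applies a one-sided (``lopsided'') minimax theorem with compactness and upper semicontinuity on nature's side, which is exactly your Kneser--Fan step. Where you genuinely diverge is part~\ref{theorem:minimax:minsup}: the paper disposes of it by citing a ``reverse lopsided minimax theorem'' \citep[Corollary~5.16]{kuhn2024distributionally} as a black box, whereas you re-derive that theorem from scratch --- showing that the worst-case objective $g(\theta)=\sup_\Q\Phi(\theta,\Q)$ is convex, lower semicontinuous, proper and inf-compact (hence the ``$\min$'' is attained), and then closing the duality gap by a finite-intersection-property argument on the sublevel sets $S_\Q$, reduced via the affinity of $\Phi$ in $\Q$ to a minimax over $\Theta$ and a compact simplex. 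This is the classical proof of the cited result, so your route buys self-containedness and makes visible exactly which hypothesis does what (inf-compactness for attainment and for the compactness anchor $S_{\Q_0}$, affinity in $\Q$ for the simplex reduction), at the cost of length; the paper's route is shorter but opaque about the mechanism.

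One step you flag but should not fully wave away: in the simplex reduction, if $\Phi(\theta,\Q_j)=+\infty$ for some $j$ and some $\theta$, then $\mu\mapsto\Phi(\theta,\Q_\mu)$ jumps from a finite value on a face of $\Delta$ to $+\infty$ in its relative interior, so it is \emph{lower} rather than upper semicontinuous there, and Kneser--Fan does not apply verbatim to $\Theta\times\Delta$. The repair is genuinely routine but worth two lines: Assumption~\ref{assumption:regularity}\,\ref{assumption:regularity:infty} guarantees a common domain point, $\mu\mapsto\inf_\theta\Phi(\theta,\Q_\mu)$ is concave on $\Delta$ so its supremum over $\Delta$ equals its supremum over $\ri(\Delta)$, and for $\mu\in\ri(\Delta)$ the infimum over $\Theta$ may be replaced by the infimum over the common domain, on which the payoff is real-valued and the minimax theorem applies. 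With that patch the argument is complete.
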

\begin{proof}
    Note that the objective function $\E_{Z\sim\Q}\left[\ell(\theta, Z)\right]$ inherits convexity in~$\theta$ from the loss function~$\ell(\theta, z)$. In addition, it is concave (in fact, linear) in~$\Q$. Note also that the optimal transport ambiguity set $\B_\eps(\hat \P)$ inherits convexity from the optimal transport discrepancy. Indeed, $d_c(\P, \hat \P)$ is defined as the optimal value of a convex minimization problem whose constraints are jointly linear in the parameters~$\P$ and~$\hat\P$ and the decision variable~$\pi$, that is, the transportation plan connecting~$\P$ and~$\hat\P$. Thus, $d_c(\P, \hat \P)$ is  jointly convex in $\P$ and~$\hat \P$ by virtue of \citep[Theorem~1]{rockafellar1974conjugate}. 

    As for assertion~\ref{theorem:minimax:minsup}, note that $\E_{Z\sim\Q}\left[\ell(\theta, Z)\right]$ is lower semicontinuous in~$\theta\in\Theta$ for all $\Q\in \B_\eps(\hat \P)$ thanks to Lemma~\ref{lemma:continuity}\,\ref{lemma:continuity:lsc}, which applies because of Assumptions~\ref{assumption:loss}\,\ref{assumption:loss:lsc} and~\ref{assumption:loss}\,\ref{assumption:loss:Fatou}. Also, $\E_{Z\sim\Q}\left[\ell(\theta, Z)\right]$ is inf-compact in~$\theta\in\Theta$ for some $\Q\in \B_\eps(\hat \P)$ by Assumption~\ref{assumption:loss}\,\ref{assumption:loss:infcompact}, and the infimum of the primal DRO problem~\eqref{eq:dro} is strictly smaller than~$+\infty$ by Assumption~\ref{assumption:regularity}\,\ref{assumption:regularity:infty}. The reverse lopsided minimax theorem \citep[Corollary~5.16]{kuhn2024distributionally} thus ensures that the optimal values of the primal and dual DRO problems match. In addition, the objective function of the primal DRO problem is proper and inf-compact in~$\theta$. Hence, the primal DRO problem is solvable thanks to Weierstrass’ extreme value theorem.

    As for assertion~\ref{theorem:minimax:maxinf}, note that $\E_{Z\sim\Q}\left[\ell(\theta, Z)\right]$ is weakly upper semicontinuous in~$\Q\in \B_\eps(\hat \P)$ for all $\theta \in\Theta$ thanks to Lemma~\ref{lemma:continuity}\,\ref{lemma:continuity:usc}, which applies because of Assumptions~\ref{assumption:continuity}\,\ref{assumption:loss:usc}, \ref{assumption:cost} and~\ref{assumption:loss}\,\ref{assumption:loss:growth}. In addition, $\B_\eps(\hat \P)$ is weakly compact thanks to Lemma~\ref{lemma:compactness}, which applies because of Assumptions~\ref{assumption:continuity}\,\ref{assumption:cost:lsc} and~\ref{assumption:cost}. Also, the supremum of the dual DRO problem~\eqref{eq:dual:dro} is strictly larger than~$-\infty$ by Assumption~\ref{assumption:regularity}\,\ref{assumption:regularity:minus:infty}. The lopsided minimax theorem \citep[Theorem~5.15]{kuhn2024distributionally} thus ensures that the optimal values of the primal and dual DRO problems match. In addition, the dual DRO problem is solvable by Weierstrass' theorem because its objective function is weakly upper semicontinuous and its feasible set is weakly compact.

    Finally, assertion~\ref{theorem:minimax:minmax} follows immediately from assertions~\ref{theorem:minimax:minsup} and \ref{theorem:minimax:maxinf}.
\end{proof}

Theorem~\ref{theorem:minimax} generalizes \citep[Theorem~2]{blanchet2019confidence}, which holds only for quadratic transportation cost functions, non-negative and smooth loss functions as well as discrete (empirical) reference distributions. In contrast, Theorem~\ref{theorem:minimax} allows~$c$ to be nonconvex, $\ell$ to adopt both positive and negative values and~$\hat \P$ to be non-discrete. We also present simpler sufficient conditions for the solvability of the primal and dual DRO problems. For example, the inf-compactness condition in Assumption~\ref{assumption:loss}\,\ref{assumption:loss:infcompact} is related to \citep[Assumption~A2\,(b)]{blanchet2019confidence}, which requires the stochastic optimization problem $\min_{\theta \in \Theta} \E_{Z \sim \P} [\ell(\theta, Z)]$ associated with the unknown data generating distribution~$\P$ to have a unique minimizer. While this assumption implies inf-compactness if the loss function is convex, it is arguably more difficult to check in practice---most notably because the true distribution $\P$ is typically unknown. 

\subsection{Computation of Nash Equilibria}
\label{section:nash:computation}

At first sight, the dual DRO problem~\eqref{eq:dual:dro} appears intractable as it constitutes a challenging maximin problem that maximizes the optimal value of a parametric minimization problem over an infinite-dimensional space of probability distributions. It is well known that the primal DRO problem~\eqref{eq:dro} can be reformulated as a finite convex program if the loss function~$\ell(\theta,z)$, the support set~$\cZ$, the feasible set~$\Theta$ and the transportation cost~$c(z,\hat z)$ display certain convexity properties and if the reference distribution~$\hat{\P}$ is discrete; see, {\em e.g.}, \citep[\textsection~4.1]{esfahani2018data} or~\citep[\textsection~6]{zhen2023unified}. In the remainder of this section we will demonstrate that essentially the same regularity conditions also enable us to reformulate the dual DRO problem~\eqref{eq:dual:dro} as a finite convex program. The solutions of the emerging convex programs can be used to construct a robust decision~$\theta^\star$ and a least favorable distribution~$\Q^\star$ that form a Nash equilibrium.

We will now show that dual DRO problems of the form~\eqref{eq:dro} and~\eqref{eq:dual:dro} can often be addressed with methods from convex optimization. To this end, we restrict attention to {\em discrete} reference distributions.

\begin{assumption}[Discrete reference distribution]
    \label{assumption:nash:reference}
    We have $\hat{\P} = \sum_{j \in [J]} p_j \delta_{\hat z_j}$ for some $J\in\N$, where each probability $p_j$ is strictly positive and $\delta_{\hat z_j}$ denotes the Dirac measure at the atom~$\hat z_j \in \cZ$.
\end{assumption}    

We also need the following assumption, which constrains the shape of the transportation cost function, the loss function, the support set and the feasible set. Thus, it limits modeling flexibility.

\begin{assumption}[Convexity conditions]~
    \label{assumption:nash:convexity}
    \begin{enumerate}[label=(\roman*)]
        \item \label{assumption:nash:cost} The transportation cost function~$c(z,\hat z)$ is lower semicontinuous in $(z,\hat z)$ and convex in~$z$.
        \item \label{assumption:nash:loss} The loss function is representable as a pointwise maximum of finitely many saddle functions, that is, we have $\ell(\theta, z) = \max_{i \in [I]} \ell_{i}(\theta, z)$ for some $I \in \N$, where $\ell_{i}(\theta, z)$ is proper, convex and lower semicontinuous in $\theta$, while $-\ell_{i}(\theta, z)$ is proper, convex and lower semicontinuous in~$z$. 
        \item
        \label{assumption:nash:set:Z} The support set is representable as $\cZ = \{ z \in \R^d : f_k(z) \leq 0~\forall k \in [K] \}$ for some $K \in \N$, where each function $f_k(z)$ is proper, convex and lower semicontinuous. 
        \item
        \label{assumption:nash:set:Theta}
        The feasible set is representable as $\Theta = \{ \theta \in \R^m: g_l(\theta) \leq 0~\forall l \in [L] \}$ for some $L \in \N$, where each function $g_l(\theta)$ is proper, convex and lower semicontinuous.
    \end{enumerate}
\end{assumption}

Recall that the transportation cost function~$c(z,\hat z)$ is non-negative and satisfies the identity of indiscernibles. Together with Assumption~\ref{assumption:nash:convexity}\,\ref{assumption:nash:cost}, this immediately implies that~$c(z,\hat z)$ is proper, convex and lower semicontinuous in~$z$ for every fixed~$\hat z$ and that it is proper and lower semicontinuous in~$\hat z$ for every fixed~$z$. Assumption~\ref{assumption:nash:convexity}\,\ref{assumption:nash:loss} requires both~$\ell_i(\theta,z)$ and~$-\ell_i(\theta,z)$ to be proper, which implies that the saddle function~$\ell_i(\theta,z)$ can adopt only finite values. However, the convex functions~$f_k(z)$ and~$g_l(\theta)$ introduced in Assumptions~\ref{assumption:nash:convexity}\,\ref{assumption:nash:set:Z} and~\ref{assumption:nash:convexity}\,\ref{assumption:nash:set:Theta}, respectively, may adopt the value~$\infty$. Assumption~\ref{assumption:nash:convexity} has been introduced in \cite{esfahani2018data, zhen2023unified} to derive finite convex reformulations of {\em primal} DRO problems with optimal transport amiguigty sets, and it holds in many relevant applications of DRO.

In the remainder, we adopt the following definition of a Slater point for a minimization problem.

\begin{definition}[Slater point]
    A Slater point of the set $\cX = \{ x \in \R^d : h_m(x) \leq 0 ~ \forall m \in [M] \}$ represented via $M$ generic inequality constraints is any vector $x^s \in \cX$ with $x^s \in \rint(\dom(h_m))$ for all $m \in [M]$ and $h_m (x^s) < 0$ for all $m \in[M]$ such that $h_m$ is nonlinear. In addition, $x^s$ is a Slater point of the minimization problem $\inf \{ h_0 (x): x \in \cX \}$ if it is a Slater point of $\cX$ and if $x^s \in \rint(\dom(h_0))$.
\end{definition}

Our results also rely on the following technical Slater conditions.
Even though they are indispensable for the proofs of our convex reformulation results below, they do not critically limit modeling flexibility.

\begin{assumption}[Slater conditions]~
    \label{assumption:slater}
    \begin{enumerate}[label=(\roman*)]
        \item \label{assumption:slater:Z} 
        For every $j \in [J]$, $\hat z_j\in\rint(\dom(c(\cdot, \hat z_j)))$ is a Slater point for the support set $\cZ$.
        \item \label{assumption:slater:Theta}
        The feasible set $\Theta$ admits a Slater point.
    \end{enumerate}
\end{assumption}

Assumption~\ref{assumption:slater} can always be enforced by slightly perturbing the problem data or by eliminating redundant constraints and decision variables; see, {\em e.g.}, \cite[Example~C.3]{zhen2023unified}.

Before addressing the dual DRO problem~\eqref{eq:dual:dro}, we show that the primal DRO problem~\eqref{eq:dro} admits a finite convex reformulation when the above regularity conditions are satisfied. This reformulation was first derived under the simplifying assumption that $c(z, \hat z) = \| z - \hat z \|$ is defined in terms of a norm on~$\cZ$~\cite[Theorem~4.2]{esfahani2018data} and later generalized to arbitrary convex transportation cost functions~\cite[\textsection~6]{zhen2023unified}. Proposition~\ref{proposition:primal} below re-derives this reformulation under the Assumptions~\ref{assumption:nash:reference}, \ref{assumption:nash:convexity} and~\ref{assumption:slater}\,\ref{assumption:slater:Z}, which can be checked {\em ex ante}. In contrast, the regularity conditions used in~\cite{zhen2023unified} can only be checked {\em ex post} by solving a convex program. To keep this paper self-contained, we provide a simple proof of Proposition~\ref{proposition:primal}, which will also allow us to streamline the proof of Proposition~\ref{proposition:dual} below.

\begin{proposition}[Primal reformulation]
    \label{proposition:primal}
    If Assumptions~\ref{assumption:nash:reference}, \ref{assumption:nash:convexity} and~\ref{assumption:slater}\,\ref{assumption:slater:Z} hold and if~$\eps > 0$, then the primal DRO problem~\eqref{eq:dro} has the same infimum as the finite convex program
    \begin{align}
        \label{eq:primal}
        \begin{array}{cll}
            \inf & \DS \lambda \eps + \sum_{j \in [J]} p_j s_j \\[1.5ex]
            \st & \theta \in \Theta, \, \lambda, \tau_{ijk} \in \R_+, \, s_j \in \R, \, \zeta^{\ell}_{ij}, \zeta^{c}_{ij}, \zeta^{f}_{ijk} \in \R^d & \forall i \in [I],\, j \in [J], \,k \in [K] \\[2ex]
            & \DS (- \ell_i)^{*2}(\theta, \zeta^{\ell}_{ij}) + \lambda c^{*1}( \zeta^{c}_{ij} / \lambda, \hat z_j) + \sum_{k \in [K]} \tau_{ijk} f_k^*( \zeta^{f}_{ijk} / \tau_{ijk}) \leq s_j & \forall i \in [I],\, j \in [J] \\[1.5ex]
            & \DS \zeta^{\ell}_{ij} + \zeta^{c}_{ij} + \sum_{k \in [K]} \zeta^{f}_{ijk} = 0 & \forall i \in [I],\, j \in [J],
        \end{array}
    \end{align}
    where $(-\ell_i)^{*2}(\theta,\zeta)$ denotes the conjugate of $-\ell_i(\theta, z)$ with respect to its second argument~$z$ for fixed~$\theta$, and where $c^{*1}(\zeta,\hat z)$ denotes the conjugate of $c(z,\hat z)$ with respect to its first argument~$z$ for fixed~$\hat z$.
\end{proposition}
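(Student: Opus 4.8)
The plan is to start from the strong duality result in Proposition~\ref{proposition:strong:duality}, which under Assumption~\ref{assumption:continuity} (itself implied by Assumption~\ref{assumption:nash:convexity}\,\ref{assumption:nash:cost}, \ref{assumption:continuity}\,\ref{assumption:loss:usc} being available via the saddle-function structure, and $\eps>0$) rewrites the inner supremum as $\inf_{\lambda\ge 0}\lambda\eps+\sum_{j\in[J]}p_j\,\ell_c(\theta,\lambda,\hat z_j)$, using Assumption~\ref{assumption:nash:reference} to turn the expectation over $\hat\P$ into a finite sum. Treating $\theta\in\Theta$ as an outer decision variable, the primal DRO problem~\eqref{eq:dro} thus becomes $\inf_{\theta\in\Theta,\lambda\ge0}\lambda\eps+\sum_j p_j\,\ell_c(\theta,\lambda,\hat z_j)$, and it remains to show that each term $\ell_c(\theta,\lambda,\hat z_j)$ equals the optimal value of the inner convex problem in $s_j,\tau_{ijk},\zeta^\ell_{ij},\zeta^c_{ij},\zeta^f_{ijk}$.

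The heart of the argument is therefore to derive, for each fixed $j$, a convex-programming representation of $\ell_c(\theta,\lambda,\hat z_j)=\sup_{z\in\cZ:\,c(z,\hat z_j)<\infty}\ell(\theta,z)-\lambda c(z,\hat z_j)$. First I would push the outer maximum over $i\in[I]$ through: since $\ell(\theta,z)=\max_{i\in[I]}\ell_i(\theta,z)$, we get $\ell_c(\theta,\lambda,\hat z_j)=\max_{i\in[I]}\sup_{z\in\cZ}\ell_i(\theta,z)-\lambda c(z,\hat z_j)$, where the constraint $c(z,\hat z_j)<\infty$ can be dropped because the objective is $-\infty$ there. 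For fixed $i$ and $j$, the inner problem is $\sup_{z}\{\ell_i(\theta,z)-\lambda c(z,\hat z_j)-\delta_\cZ(z)\}$, i.e.\ the negative of an infimum of a sum of convex functions of $z$: $-\ell_i(\theta,\cdot)$, $\lambda c(\cdot,\hat z_j)$, and $\delta_\cZ=\sum_{k}\delta_{\{f_k\le0\}}$ (or, more cleanly, dualize the constraints $f_k(z)\le0$ directly). Applying Fenchel--Rockafellar / Lagrangian duality to $\inf_z -\ell_i(\theta,z)+\lambda c(z,\hat z_j)\ \text{s.t.}\ f_k(z)\le0$ yields a dual of the form $\sup_{\tau_{ijk}\ge0}-\inf_z\{-\ell_i(\theta,z)+\lambda c(z,\hat z_j)+\sum_k\tau_{ijk}f_k(z)\}$, and the inner infimum over $z$ is $-[(-\ell_i(\theta,\cdot))\ \square\ (\lambda c(\cdot,\hat z_j))\ \square\ (\sum_k\tau_{ijk}f_k)]^{**}$-type expression — concretely, by the infimal-convolution/conjugate-of-sum identity it equals the negative of $\inf\{(-\ell_i)^{*2}(\theta,\zeta^\ell_{ij})+\lambda c^{*1}(\zeta^c_{ij}/\lambda,\hat z_j)+\sum_k\tau_{ijk}f_k^*(\zeta^f_{ijk}/\tau_{ijk})\ :\ \zeta^\ell_{ij}+\zeta^c_{ij}+\sum_k\zeta^f_{ijk}=0\}$, using the perspective-function convention $\lambda f(\cdot/\lambda)$ defined in the notation section to handle $\lambda=0$ and $\tau_{ijk}=0$. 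Introducing the epigraphic variable $s_j\ge\max_i(\cdots)$ then produces exactly program~\eqref{eq:primal}, and minimizing over $\theta\in\Theta$, $\lambda\ge0$ completes the identification. Convexity of~\eqref{eq:primal} follows because each $(-\ell_i)^{*2}(\theta,\cdot)$ is jointly convex in $(\theta,\zeta)$ (conjugate of a jointly convex function), each perspective term is jointly convex in its numerator and scalar, and $\Theta$ is convex.

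The main obstacle is justifying \emph{strong} duality (zero gap, and ideally attainment of the multipliers) in the inner $z$-problem and in the conjugate-of-a-sum step, both of which require constraint qualifications — this is precisely where Assumption~\ref{assumption:slater} enters. For the constraints $f_k(z)\le0$ defining $\cZ$, Assumption~\ref{assumption:slater}\,\ref{assumption:slater:Z} supplies a Slater point $\hat z_j\in\ri(\dom(c(\cdot,\hat z_j)))$ with $f_k(\hat z_j)<0$ for all nonlinear $f_k$, which gives strong Lagrangian duality for the semi-infinite-free convex program in $z$ even though $-\ell_i(\theta,\cdot)$ may only be finite-valued on a subset; one must also check $\ri$-overlap of the domains of $-\ell_i(\theta,\cdot)$, $c(\cdot,\hat z_j)$ and $\cZ$, which holds because $\ell_i$ is proper jointly (hence $-\ell_i(\theta,\cdot)$ has full domain $\R^d$ by properness of both $\ell_i$ and $-\ell_i$) and $\hat z_j$ lies in the relevant relative interiors. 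For the conjugate-of-sum identity one invokes the standard result that $(f_1+\cdots+f_n)^*=f_1^*\square\cdots\square f_n^*$ with the infimal convolution \emph{exact} when the domains overlap in relative interior — the same Slater-type condition. I would also need to handle the boundary cases $\lambda=0$ (then $\ell_c$ may be $+\infty$ unless $\ell_i(\theta,\cdot)$ is bounded above, which the perspective convention $\lambda c^{*1}(\zeta/\lambda,\hat z)=\delta^*_{\dom(c^{*1}(\cdot,\hat z))}(\zeta)$ encodes correctly) and $\tau_{ijk}=0$ for linear constraints, deferring to the convention in the notation paragraph and to \cite[Example~C.3]{zhen2023unified}. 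Finally, I would note that this line of argument is essentially the one in \cite[\textsection~6]{zhen2023unified} specialized to the ex-ante-checkable Slater conditions, but streamlined so that the same bookkeeping (the variables $\zeta^\ell_{ij},\zeta^c_{ij},\zeta^f_{ijk}$ and the linear balance constraint) can be reused verbatim in the proof of Proposition~\ref{proposition:dual}.
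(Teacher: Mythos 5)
Your proposal is correct and follows essentially the same route as the paper: apply Proposition~\ref{proposition:strong:duality} with the discrete reference distribution, split the loss over the saddle pieces $\ell_i$, and dualize each inner maximization over $z$ via Fenchel/Lagrangian duality under the Slater condition of Assumption~\ref{assumption:slater}\,\ref{assumption:slater:Z} — the paper simply delegates that last step to the explicit duality result of \citet[Theorem~2]{zhen2023unified} rather than re-deriving the conjugate-of-sum identity. One minor slip: the joint convexity of $(-\ell_i)^{*2}(\theta,\zeta)$ does not follow from it being "the conjugate of a jointly convex function" ($-\ell_i$ is a saddle function, concave in $\theta$), but rather from writing it as $\sup_{z}\,\inner{\zeta}{z}+\ell_i(\theta,z)$, a pointwise supremum of functions convex in $(\theta,\zeta)$.
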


\begin{proof}
    Note first that Assumptions~\ref{assumption:nash:reference} and~\ref{assumption:nash:convexity}\,\ref{assumption:nash:cost}--\ref{assumption:nash:loss} strengthen Assumption~\ref{assumption:continuity}. In particular, Assumption~\ref{assumption:nash:convexity}\,\ref{assumption:nash:cost} trivially implies Assumption~\ref{assumption:continuity}\,\ref{assumption:cost:lsc}. In addition, Assumption~\ref{assumption:nash:reference} requires~$\hat{\P}$ to be discrete, and Assumption~\ref{assumption:nash:convexity}\,\ref{assumption:nash:loss} implies that~$\ell(\theta, z)$ is finite-valued. As any finite-valued function is integrable with respect to any discrete distribution, Assumption~\ref{assumption:continuity}\,\ref{assumption:loss:usc} trivially holds. Thus, Proposition~\ref{proposition:strong:duality} applies and allows us to reformulate the inner supremum in~\eqref{eq:dro} as
    \begin{align*}
        \sup_{\Q \in \B_\eps(\hat{\P})} ~ \E_{Z\sim\Q} \left[ \ell(\theta, Z) \right]
        = \left\{
        \begin{array}{cll}
            \inf & \DS \lambda \eps + \sum_{j \in [J]} p_j s_j \\[1ex]
            \st & \lambda \in \R_+, ~ s_j \in \R & \forall j \in [J] \\[1ex]
            & \DS \sup_{z \in \cZ}~ \ell(\theta, z) - \lambda c(z, \hat z_j) \leq s_j & \forall j \in [J],
        \end{array}
        \right.
    \end{align*}
    where the epigraphical decision variable~$s_j$ coincides with~$\ell_{c}(\theta, \lambda, \hat z_j)=\sup_{z \in \cZ}~ \ell(\theta, z) - \lambda c(z, \hat z_j)$ at optimality. Note that since $\ell(\theta,z)$ is finite-valued, there is no need to make the constraint $c(z,\hat z)<\infty$ explicit. By Assumption~\ref{assumption:nash:convexity}\,\ref{assumption:nash:loss}, the loss function~$\ell$ can be expressed in terms of the saddle functions $\ell_i$, $i\in[I]$, and thus the resulting dual problem is equivalent to the following robust convex program.
    \begin{align}
        \label{eq:strong:duality:robust}
        \begin{array}{cll}
            \inf & \DS \lambda \eps + \sum_{j \in [J]} p_j s_j \\[1ex]
            \st & \lambda \in \R_+, ~ s_j \in \R & \forall j \in [J] \\[1ex]
            & \DS \sup_{z \in \cZ}~ \ell_i(\theta, z) - \lambda c(z, \hat z_j) \leq s_j & \forall i \in [I], \, j \in [J]
        \end{array}
    \end{align}
    By the convexity of the support set $\cZ$ imposed by Assumption~\ref{assumption:nash:convexity}\,\ref{assumption:nash:set:Z} and thanks to an explicit convex duality result~\cite[Theorem~2]{zhen2023unified}, the embedded maximization problems in~\eqref{eq:strong:duality:robust} can be recast as 
    \begin{align*}
        \sup_{z \in \cZ} ~ \ell_i(\theta, z) - \lambda c(z,\hat z_j) 
        = \left\{ 
        \begin{array}{cl}
            \min_{} & \DS (- \ell_i)^{*2}(\theta, \zeta^{\ell}_{ij}) + \lambda c^{*1}( \zeta^{c}_{ij} / \lambda, \hat z_j) + \sum_{k \in [K]} \tau_{ijk} f_k^*( \zeta^{f}_{ijk} / \tau_{ijk}) \\[2ex]
            \st & \tau_{ijk} \in \R_+, ~ \zeta^{\ell}_{ij}, \zeta^{c}_{ij}, \zeta^{f}_{ijk} \in \R^d \quad \forall k \in [K] \\[1.5ex]
            & \DS \zeta^{\ell}_{ij} + \zeta^{c}_{ij} + \sum_{k \in [K]} \zeta^{f}_{ijk} = 0
        \end{array}
        \right.
    \end{align*}
    for all~$i \in [I]$ and~$j \in [J]$. Indeed,  Assumption~\ref{assumption:slater}\,\ref{assumption:slater:Z} and the finiteness of~$\ell_i$ implied by Assumption~\ref{assumption:nash:convexity}\,\ref{assumption:nash:loss} ensure that the primal maximization problem admits a Slater point. This implies both strong duality as well as solvability of the dual minimization problem. Substituting all resulting dual minimization problems into~\eqref{eq:strong:duality:robust} and eliminating the corresponding minimization operators yields~\eqref{eq:primal}.
    
    Note that the conjugate $(-\ell_i)^{*2}$ of the negative saddle function~$-\ell_i$ with respect to its second argument is convex by construction. Similarly, when~$\hat z_j$ is kept fixed, the perspectives of the conjugates~$c^{*1}(\cdot, \hat z_j)$ and~$f^\star_k$ are readily seen to be convex. Thus, problem~\eqref{eq:primal} is indeed a convex program.
\end{proof}

Next, we prove that the dual DRO problem~\eqref{eq:dual:dro} also admits a finite convex reformulation if all assumptions of Proposition~\ref{assumption:nash:reference} are satisfied and if at least one out of three regularity conditions holds. 

\begin{assumption}
    \label{assumption:nash:regularity}
    At least one of the following three conditions is satisfied: (i)
    $\E_{Z \sim \Q} [\ell(\theta, Z)]$ is inf-compact in~$\theta\in\Theta$ for some $\Q\in \B_\eps(\hat \P)$, (ii) $\cZ$ is compact or (iii)~$c(z,\hat z)$ grows superlinearly with $z$.
\end{assumption}

Assumption~\ref{assumption:nash:regularity} is unrestrictive in practice, and we will later see that it can be further relaxed.

\begin{proposition}[Dual reformulation]
    \label{proposition:dual}
    If Assumptions~\ref{assumption:loss}\,\ref{assumption:loss:Fatou}, \ref{assumption:regularity}\,\ref{assumption:regularity:infty}, \ref{assumption:nash:reference}, \ref{assumption:nash:convexity}, \ref{assumption:slater} and~\ref{assumption:nash:regularity} hold and if~$\eps > 0$, then the dual DRO problem~\eqref{eq:dual:dro} has the same supremum as the finite convex program
    \begin{align}
    \label{eq:dual}
    \begin{array}{cl@{\,}l}
        \max & \DS - \sum_{i \in [I]} \sum_{j \in [J]} q_{ij} \ell_i^{*1}(\alpha_{ij} /q_{ij}, \hat z_j + \xi_{ij} / q_{ij}) - \sum_{l \in [L]} \nu_l& g^*_l(\beta_l / \nu_l) \\[1ex]
        \st & \DS q_{ij}, \nu_l \in \R_+, ~ \xi_{ij} \in \R^d, ~ \alpha_{ij}, \beta_l \in \R^m & \forall i \in [I], \, j \in [J], \, l \in [L] \\[1ex]
        & q_{ij} f_k(\hat z_j + {\xi_{ij}}/{q_{ij}}) \leq 0 & \forall i \in [I], \, j \in [J], \, k \in [K] \\[1ex]
        & \DS \sum_{i \in [I]} q_{ij} = p_j & \forall j \in [J] \\[1ex]
        & \DS \sum_{i \in [I]} \sum_{j \in [J]} \alpha_{ij} + \sum_{l \in [L]} \beta_l = 0 \\[1ex]
        &\DS \sum_{i \in [I]} \sum_{j \in [J]} q_{ij} \, c \big( \hat z_j+ {\xi_{ij}}/{q_{ij}}, \hat z_j) \leq \eps,
    \end{array}
    \end{align}
    where $\ell_i^{*1}(\alpha,z)$ denotes the conjugate of $\ell_i(\theta, z)$ with respect to its first argument~$\theta$ for fixed~$z$.
\end{proposition}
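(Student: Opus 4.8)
The plan is to mirror the structure of the proof of Proposition~\ref{proposition:primal} but working on the dual DRO problem~\eqref{eq:dual:dro} instead of the primal one. First I would invoke Theorem~\ref{theorem:minimax}\,\ref{theorem:minimax:maxinf} (or, more carefully, the combination of assumptions available here) to guarantee that $\sup$ and $\inf$ can be interchanged and that the dual DRO problem is solvable; Assumption~\ref{assumption:nash:regularity} is precisely what supplies inf-compactness, compact $\cZ$, or superlinear growth of $c$, each of which feeds into the continuity/compactness hypotheses of that theorem (Lemma~\ref{lemma:compactness} and Lemma~\ref{lemma:continuity}). This reduces the task to rewriting
\[
\sup_{\Q \in \B_\eps(\hat\P)} \inf_{\theta \in \Theta}~\E_{Z\sim\Q}[\ell(\theta,Z)]
\]
as the claimed finite convex program.

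Next I would parametrize the feasible distributions $\Q$. Because $\hat\P = \sum_{j} p_j \delta_{\hat z_j}$ is discrete, any transportation plan $\pi \in \Pi(\Q,\hat\P)$ disintegrates along the atoms $\hat z_j$, and one may write $\Q$ as a mixture over $j \in [J]$ of conditional distributions supported on $\cZ$, each scaled by $p_j$. The key structural observation — already exploited for the primal reformulation — is that the saddle-point representation $\ell(\theta,z) = \max_{i\in[I]}\ell_i(\theta,z)$ together with convexity of $\inf_\theta$ in $\Q$ lets us restrict attention to distributions with at most $I\cdot J$ atoms: one atom per pair $(i,j)$, located at some point $\hat z_j + \xi_{ij}/q_{ij} \in \cZ$ and carrying mass $q_{ij}$, with $\sum_i q_{ij} = p_j$. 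Here the $\xi_{ij}$ are the perspective-scaled displacement variables and $q_{ij} f_k(\hat z_j + \xi_{ij}/q_{ij}) \le 0$ encodes support membership via the perspective of $f_k$, while the transportation budget becomes $\sum_{i,j} q_{ij}\, c(\hat z_j + \xi_{ij}/q_{ij},\hat z_j) \le \eps$. This finite-atom reduction is the step I expect to be the main obstacle: one must argue that the optimal $\Q^\star$ can be taken finitely supported in this structured way, which typically follows from a Richter–Rogosinski / moment-type argument applied to the extremal distribution problem, or alternatively from strong LP duality applied directly to the Choquet-style representation — one should check carefully which of these the Slater conditions in Assumption~\ref{assumption:slater} actually license.

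Finally, with $\Q$ parametrized by $(q_{ij},\xi_{ij})$, the inner minimization $\inf_{\theta\in\Theta}\sum_{i,j} q_{ij}\,\ell_i(\theta, \hat z_j + \xi_{ij}/q_{ij})$ is a convex program in $\theta$ over $\Theta = \{g_l \le 0\}$; using Assumption~\ref{assumption:slater}\,\ref{assumption:slater:Theta} (a Slater point for $\Theta$) and the same explicit convex-duality machinery of \cite[Theorem~2]{zhen2023unified} that was used for the embedded maxima in Proposition~\ref{proposition:primal}, I would dualize this inner minimization. Introducing multipliers $\nu_l$ for the constraints $g_l$ and Lagrange/conjugate variables $\alpha_{ij}$, $\beta_l$ for the coupling in $\theta$, strong duality turns $\inf_\theta$ into a $\max$ whose objective is $-\sum_{i,j} q_{ij}\,\ell_i^{*1}(\alpha_{ij}/q_{ij}, \hat z_j + \xi_{ij}/q_{ij}) - \sum_l \nu_l g_l^*(\beta_l/\nu_l)$ subject to the stationarity condition $\sum_{i,j}\alpha_{ij} + \sum_l \beta_l = 0$. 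Merging the outer maximization over $(q_{ij},\xi_{ij})$ with this inner maximization and eliminating the nested $\max$ operators yields exactly~\eqref{eq:dual}. Convexity of the resulting program follows because each term $q_{ij}\ell_i^{*1}(\alpha_{ij}/q_{ij}, \hat z_j + \xi_{ij}/q_{ij})$ is (up to the affine substitution $z = \hat z_j + \xi_{ij}/q_{ij}$) the perspective of the jointly convex conjugate $\ell_i^{*1}$ — convex since $\ell_i$ is a saddle function — and likewise $q_{ij}\,c(\hat z_j + \xi_{ij}/q_{ij},\hat z_j)$, $q_{ij} f_k(\cdot)$ and $\nu_l g_l^*(\beta_l/\nu_l)$ are perspectives of convex functions; the remaining constraints are linear.
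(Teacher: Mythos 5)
Your proposal reproduces the final step correctly (dualizing the inner minimization over $\theta$ via the Slater point of $\Theta$ and \cite[Theorem~2]{zhen2023unified}), but the two earlier steps contain genuine gaps. First, Theorem~\ref{theorem:minimax}\,\ref{theorem:minimax:maxinf} is not available here: it requires Assumptions~\ref{assumption:cost}, \ref{assumption:loss}\,\ref{assumption:loss:growth} and~\ref{assumption:regularity}\,\ref{assumption:regularity:minus:infty}, none of which are hypotheses of Proposition~\ref{proposition:dual}, and neither compactness of~$\cZ$ nor superlinear growth of~$c$ implies the metric lower bound of Assumption~\ref{assumption:cost}\,\ref{assumption:cost:lb}. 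Moreover, the dual DRO problem need \emph{not} be solvable under the stated hypotheses (that is precisely why the proposition only asserts equality of suprema and why Lemma~\ref{lem:solvability-dual} is stated separately); the paper handles non-attainment by a two-case analysis on the index sets $\cI_j^\infty$ and by constructing an explicit sequence $\Q(n)$ of asymptotically optimal distributions, which is also where Assumption~\ref{assumption:nash:regularity} actually enters (via Theorem~\ref{theorem:minimax}\,\ref{theorem:minimax:minsup}, whose hypotheses \emph{are} verifiable here).

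Second, the finite-atom reduction you flag as the main obstacle cannot be repaired with the tools you suggest. Richter--Rogosinski/moment arguments and Choquet-type LP duality apply to problems that are \emph{linear} in the measure, whereas the objective $\Q\mapsto\inf_{\theta\in\Theta}\E_{Z\sim\Q}[\ell(\theta,Z)]$ of the dual DRO problem is only concave (a pointwise infimum of linear functionals), as the paper emphasizes. The paper never performs a finite-atom reduction of~\eqref{eq:dual:dro} directly: instead it dualizes the inner supremum of the \emph{primal} problem~\eqref{eq:dro} twice (Proposition~\ref{proposition:primal} followed by the dual program~(19) of \cite{zhen2023unified}) to obtain the finite-dimensional parametrization in $(q_{ij},\xi_{ij})$ with a compact feasible set, interchanges $\inf_\theta$ and $\max_{(q,\xi)}$ by Sion's theorem, and only then proves optimality of the induced distributions in~\eqref{eq:dual:dro} by a sandwich argument that needs no more than weak duality between~\eqref{eq:dro} and~\eqref{eq:dual:dro}. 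Without this detour through the primal, the equality between the supremum of~\eqref{eq:dual:dro} and the maximum of~\eqref{eq:dual} does not follow from your outline.
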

\begin{proof}
    We first establish that the infimum of the primal DRO problem~\eqref{eq:dro} equals the maximum of the finite convex program~\eqref{eq:dual} and that~\eqref{eq:dual} is indeed solvable (Step~1). Using the insights from Step~1, we then show that any optimal solution of the convex program~\eqref{eq:dual} can be used to construct a near-optimal solution of the dual DRO problem~\eqref{eq:dual:dro} and that the two problems have the same supremum (Step~2). 
    
    \textbf{Step~1.} By Proposition~\ref{proposition:primal} for $\Theta=\{\theta\}$, which applies thanks to Assumptions~\ref{assumption:nash:reference}, \ref{assumption:nash:convexity} and~\ref{assumption:slater}\,\ref{assumption:slater:Z} and because~$\eps > 0$, the supremum of the inner maximization problem in~\eqref{eq:dro} equals
    \begin{align}
        \label{eq:inner:inf}
    \begin{array}{cll}
        \inf & \DS \lambda \eps + \sum_{j \in [J]} p_j s_j \\[1.5ex]
        \st & \lambda, \tau_{ijk} \in \R_+, ~ s_j \in \R, ~ \zeta^{\ell}_{ij}, \zeta^{c}_{ij}, \zeta^{f}_{ijk} \in \R^d & \forall i \in [I], \, j \in [J],\,  k \in [K] \\[2ex]
        & \DS (- \ell_i)^{*2}(\theta, \zeta^{\ell}_{ij}) + \lambda c^{*1}( \zeta^{c}_{ij} / \lambda, \hat z_j) + \sum_{k \in [K]} \tau_{ijk} f_k^*( \zeta^{f}_{ijk} / \tau_{ijk}) \leq s_j & \forall i \in [I], \, j \in [J] \\
        & \DS \zeta^{\ell}_{ij} + \zeta^{c}_{ij} + \sum_{k \in [K]} \zeta^{f}_{ijk} = 0 & \forall i \in [I], \,j \in [J].
    \end{array}
    \end{align}
    In~\citep[\textsection~6]{zhen2023unified} it is shown that the maximization problem dual to the above minimization problem can be simplified by eliminating auxiliary epigraphical decision variables and applying a linear variable substitution; see problem~(19) in~\citep{zhen2023unified}. In our notation, this simplified dual convex program is given~by
    \begin{align}
        \label{eq:inner:sup}
        \begin{array}{cll}
            \max & \DS \sum_{i \in [I]} \sum_{j \in [J]} q_{ij} \ell_i(\theta, \hat z_j + \xi_{ij} / q_{ij}) \\[0.5ex]
            \st & \DS q_{ij} \in \R_+, ~ \xi_{ij} \in \R^d & \forall i \in [I], \, j \in [J] \\[0.5ex]
            & q_{ij} f_k(\hat z_j + {\xi_{ij}}/{q_{ij}}) \leq 0 & \forall i \in [I], \, j \in [J], \, k \in [K] \\[0.5ex]
            & \DS \sum_{i \in [I]} q_{ij} = p_j & \forall j \in [J] \\[0.5ex]
            & \DS \sum_{i \in [I]} \sum_{j \in [J]} q_{ij} \, c \big( \hat z_j+ {\xi_{ij}}/{q_{ij}}, \hat z_j) \leq \eps.
        \end{array}
    \end{align}
    Strong duality holds thanks to Assumption~\ref{assumption:slater}\,\ref{assumption:slater:Z}, which implies that the dual maximization problem admits a Slater point $(\{q_{ij}^s, \xi_{ij}^s\}_{i,j})$ with~$q_{ij}^s = p_j / J$ and $\xi_{ij}^s = 0$ for all $i \in [I]$ and $j \in [J]$.
    In addition, \citep[Proposition~20]{zhen2023unified} implies that the dual problem has a compact feasible set and is thus solvable. Hence, its maximum is indeed attained. 
    Substituting this dual problem back into~\eqref{eq:dro} and interchanging the infimum and the maximum, which is allowed by Sion's minimax theorem~\citep{sion1958general}, we then obtain
    \begin{align}
        \label{eq:dual:robust}
        \begin{array}{cll}
            \max & \DS \inf_{\theta \in \Theta} ~ \sum_{i \in [I]} \sum_{j \in [J]} q_{ij} \ell_i(\theta, \hat z_j + \xi_{ij} / q_{ij}) \\[0.5ex]
            \st & \DS q_{ij} \in \R_+, ~ \xi_{ij} \in \R^d & \forall i \in [I], \, j \in [J] \\[0.5ex]
            & q_{ij} f_k(\hat z_j + {\xi_{ij}}/{q_{ij}}) \leq 0 & \forall i \in [I], \, j \in [J], \, k \in [K] \\[0.5ex]
            & \DS \sum_{i \in [I]} q_{ij} = p_j & \forall j \in [J] \\[0.5ex]
            & \DS \sum_{i \in [I]} \sum_{j \in [J]} q_{ij} \, c \big( \hat z_j+ {\xi_{ij}}/{q_{ij}}, \hat z_j) \leq \eps.
        \end{array}
    \end{align}
    The inner minimization problem in~\eqref{eq:dual:robust} has a Slater point because of
    Assumptions~\ref{assumption:nash:convexity}\,\ref{assumption:nash:loss} and~\ref{assumption:slater}\,\ref{assumption:slater:Theta}. By~\citep[Theorem~2]{zhen2023unified}, this minimization problem therefore admits a strong dual of the form
    \begin{align}
        \label{eq:outer:max}
        \begin{array}{cl}
            \max & \DS - \sum_{i \in [I]} \sum_{j \in [J]} q_{ij} \ell_i^{*1}(\alpha_{ij}/q_{ij}, \hat z_j + \xi_{ij} / q_{ij}) - \sum_{l \in [L]} \nu_l g^*_l(\beta_l / \nu_l) \\[1ex]
            \st & \alpha_{ij} \in \R^m, ~ \beta_l \in \R^m, ~ \nu_l \in \R_+ \quad \forall i \in [I], \, j \in [J], \, l \in [L] \\[1ex]
            & \DS \sum_{i \in [I]} \sum_{j \in [J]} \alpha_{ij} + \sum_{l \in [L]} \beta_l = 0,
        \end{array}
        \end{align}
    which is solvable. Substituting this dual maximization problem back into~\eqref{eq:dual:robust} finally yields~\eqref{eq:dual}. Note that the maximum in~\eqref{eq:dual} is indeed attained because problem~\eqref{eq:dual:robust} as well as the dual of the parametric minimization problem in its objective function are both solvable.

    \textbf{Step~2.} Fix now any maximizer $(\{q_{ij}^\star, \xi_{ij}^\star, \alpha_{ij}^\star\}_{i,j}, \{\nu_l^\star, \beta_l^\star\}_{l})$ of problem~\eqref{eq:dual}, which exists thanks to the results of Step~1. In Step~2 we will show that this maximizer can be used to construct a maximizer of the dual DRO problem~\eqref{eq:dual:dro} (if~\eqref{eq:dual:dro} is solvable) or a sequence of distributions that are feasible and asymptotically optimal in~\eqref{eq:dual:dro} (if~\eqref{eq:dual:dro} is {\em not} solvable). To this end, define three disjoint index sets
    \begin{align*}
        \cI_j^+ = \left\{ i \in [I] : q_{ij}^\star > 0 \right\}, \quad
        \cI_j^0 = \left\{ i \in [I] : q_{ij}^\star = 0,\, \xi_{ij}^\star = 0 \right\} \quad \text{and} \quad
        \cI_j^\infty = \left\{ i \in [I] : q_{ij}^\star = 0,\, \xi_{ij}^\star \neq 0 \right\},
    \end{align*}
    which form a partition of~$[I]$ for every fixed~$j$. Assume first that~$\cI_j^\infty =\emptyset$ for every~$j\in [J]$. In this case, we can use similar arguments as in
    \citep[\textsection~2.2]{kuhn2019wasserstein} and \citep[\textsection~6]{zhen2023unified} to show that the discrete distribution 
    \begin{equation*}
        \Q^\star = \sum_{j \in [J]} \sum_{i \in \cI_j^+} q_{ij}^\star \delta_{\hat z_j+{\xi_{ij}^\star}/{q_{ij}^\star}}
    \end{equation*}
    solves the dual DRO problem~\eqref{eq:dual:dro}. Note first that $\Q^\star$ is indeed a probability distribution because
    \[
        \Q^\star(z\in\R^d) =\sum_{j\in[J]}\sum_{i \in \cI_j^+} q_{ij}^\star=\sum_{j\in[J]}\sum_{i \in [I]} q_{ij}^\star = \sum_{j\in[J]} p_j =1,
    \]
    where the second equality holds because~$q^\star_{ij}=0$ for all~$i
    \in \cI_j^0\cup \cI_j^\infty$, while the third equality follows from the constraints of problem~\eqref{eq:dual}. More precisely, we have~$\Q^\star(z\in\cZ) =1$ because the constraints of~\eqref{eq:dual} imply that~$f_k(\hat z_j + \xi_{ij}^\star / q_{ij}^\star) \leq 0$ whenever~$i\in\cI^+_j$. In addition, one readily verifies that
    \[
        d_c(\Q^\star,\hat{\P}) \leq \sum_{j \in [J]} \sum_{i \in \cI^+_j} q_{ij}^\star c(\hat z_j + \xi_{ij}^\star / q_{ij}^\star, \hat z_j)= \sum_{j \in [J]} \sum_{i \in [I]} q_{ij}^\star c(\hat z_j + \xi_{ij}^\star / q_{ij}^\star, \hat z_j) \leq \eps,
    \]
    where the first inequality holds because the transportation plan that moves mass~$q_{ij}^\star$ from~$\hat z_j$ to~$\hat z_j+\xi_{ij}^\star / q_{ij}^\star$ for all~$j\in[J]$ and~$i\in\cI^+_j$ is an admissible coupling of~$\hat{\P}$ and~$\Q^\star$, the equality holds because~$\cI_j^\infty=\emptyset$ and because $q_{ij}^\star c(\hat z_j + \xi_{ij}^\star / q_{ij}^\star, \hat z_j)= 0 c(\hat z_j+0/0,\hat z_j)=0$ for all~$i \in \cI_j^0$ thanks to our conventions for perspective functions, and the second inequality follows from the constraints of problem~\eqref{eq:dual}. In summary, we have thus shown that~$\Q^\star\in\B_\eps(\hat{\P})$. It remains to be shown that~$\Q^\star$ is optimal in~\eqref{eq:dual:dro}. To this end, note that
    \begin{align}
        \label{eq:sandwich}
        \begin{aligned}
            \sup_{\Q \in \B_\eps(\hat{\P})} \inf_{\theta \in \Theta} \E_{Z \sim \Q} [\ell(\theta, Z)]
            \,&\geq \inf_{\theta \in \Theta} ~ \E_{Z\sim\Q^\star} [\ell(\theta, Z)] \geq \inf_{\theta \in \Theta} ~ \sum_{j \in [J]} \sum_{i \in \cI_j^+} q_{ij}^\star \ell_i(\theta, \hat z_j + \xi_{ij}^\star / q_{ij}^\star) \\
            &= \inf_{\theta \in \Theta} ~ \sum_{i \in [I]} \sum_{j \in [J]} q_{ij}^\star \ell_i(\theta, \hat z_j + \xi_{ij}^\star / q_{ij}^\star)  \\
            &\geq - \sum_{i \in [I]} \sum_{j \in [J]} q_{ij}^\star \ell_i^{*1}(\alpha_{ij}^\star/q_{ij}^\star, \hat z_j + \xi_{ij}^\star / q_{ij}^\star) - \sum_{l \in [L]} \nu_l^\star g^*_l(\beta_l^\star / \nu_l^\star) \\
            &= \inf_{\theta \in \Theta} \sup_{\Q \in \B_\eps(\hat{\P})} \E_{Z \sim \Q} [\ell(\theta, Z)]\geq \sup_{\Q \in \B_\eps(\hat{\P})} \inf_{\theta \in \Theta} \E_{Z \sim \Q} [\ell(\theta, Z)],
        \end{aligned}
    \end{align}
    where the first inequality holds because~$\Q^\star\in\B_\eps(\hat{\P})$, the second inequality exploits Assumption~\ref{assumption:nash:convexity}\,\ref{assumption:nash:loss}, which implies that $\ell$ majorizes~$\ell_i$ for every~$i\in[I]$, and the first equality follows from the properties of perspective functions and from our assumption that~$\cI^\infty_j$ is empty. The third inequality then holds because $(\{\alpha_{ij}^\star\}_{i,j}, \{\nu_l^\star, \beta_l^\star\}_{l})$ is feasible in~\eqref{eq:outer:max}, which is dual to the resulting minimization problem over~$\theta$, and because any dual feasible solution provides a lower bound on the infimum of the primal problem. The second equality holds because the primal DRO problem~\eqref{eq:dro} and the finite convex program~\eqref{eq:dual} share the same optimal value, which was established in Step~1, while the fourth and last inequality follows from weak duality. Hence, all inequalities in~\eqref{eq:sandwich} are in fact equalities, which proves that the optimal values of the primal and dual DRO problems~\eqref{eq:dro} and~\eqref{eq:dual:dro} coincide and that $\Q^\star$ solves~\eqref{eq:dual:dro}.

    Assume now that $\cI_j^\infty \neq \emptyset$ for some~$j \in [J]$,
    and select any~$i\in\cI_j^\infty$. Since $q_{ij}^\star = 0$, \cite[Theorem~8.6]{rockafellar1970convex} implies that $\xi^\star_{ij}$ is a recession direction of~$\cZ$ (see also \citep[Lemma~C.8\,(ii)]{zhen2023unified}), and since~$\xi_{ij}^\star \neq 0$, the support set~$\cZ$ cannot be compact. Similarly, one readily verifies that
    $c(z,\hat z)$ cannot grow superlinearly with~$z$ for otherwise the left hand side of the last constraint in~\eqref{eq:dual} would evaluate to infinity \cite[Remark~C.5]{zhen2023unified}, which contradicts the feasibility of~$(\{q_{ij}^\star, \xi_{ij}^\star, \alpha_{ij}^\star\}_{i,j}, \{\nu_l^\star, \beta_l^\star\}_{l})$. Assumption~\ref{assumption:nash:regularity} thus~implies that $\E_{Z \sim \Q} [\ell(\theta, Z)]$ is inf-compact in~$\theta\in\Theta$ for some $\Q \in \B_\eps(\hat \P)$.
    Consider now the discrete distributions 
    \begin{align*}
        \Q(n) = \sum_{j \in [J]} \sum_{i \in \cI_j^+\cup \cI^\infty_j} q_{ij}(n) \,  \delta_{z_{ij}(n)}
    \end{align*}
    indexed by $n \in \N$, where
    \begin{align*}
        q_{ij}(n) = \begin{cases} q_{ij}^\star ( 1 - | \cI_j^\infty | / n ) & \text{if } i \in \cI_j^+ \\ p_j / n & \text{if } i \in \cI_j^\infty \end{cases} 
        \quad \text{and} \quad
        z_{ij}(n) = \begin{cases} \hat z_j + \xi_{ij}^\star / q_{ij}^\star & \text{if } i \in \cI_j^+ \\ \hat z_j + n \hspace{0.1ex} \xi_{ij}^\star / p_j & \text{if } i \in \cI_j^\infty. \end{cases}
    \end{align*}
    Following \citep[\textsection~6]{zhen2023unified}, we now prove that~$\Q(n)$ is feasible and asymptotically optimal in~\eqref{eq:dual:dro} as~$n$ increases. To this end, we first show that $\Q(n)\in\B_{\eps}(\hat{\P})$ for all $n > \underline n=\max_{j\in[J]}|\cI_j^\infty|$. It is easy to verify that $\Q(n)$ is indeed a probability distribution for every $n >\underline n$ because $\sum_{j\in[J]} \sum_{i\in \cI_j^+\cup \cI^\infty_j}q_{ij}(n)=1$ and~$q_{ij}(n) \geq 0$ for all $i \in \cI_j^+ \cup \cI_j^\infty$ and $j \in [J]$ thanks to the constraints of problem~\eqref{eq:dual}. In addition, $\Q(n)$ is supported on $\cZ$, that is, $z_{ij}(n) \in \cZ$ for each~$j \in [J]$ and $i \in \cI_j^+ \cup \cI_j^\infty$. In particular, the constraints of problem~\eqref{eq:dual} imply that~$\xi_{ij}^\star$ is a recession direction of~$\cZ$ for every~$i \in \cI_j^\infty$ (see, {\em e.g.}, \cite[Lemma~C.8]{zhen2023unified}). Hence, for any fixed~$j\in[J]$ and~$i\in\cI^\infty_j$, the distribution~$\Q(n)$ sends a decreasing probability mass~$q_{ij(n)}=p_j/n$ along the ray emanating from~$\hat z_j$ in the direction of~$\xi_{ij}^\star$ without ever leaving the support set~$\cZ$ as $n$ grows. Finally, for all~$n > \underline n$, one readily verifies that
    \begin{align*}
        d_c(\Q(n), \hat{\P}) 
        &\leq \sum_{j \in [J]} \sum_{i \in \cI_j^+ \cup \cI_j^\infty} q_{ij}(n) \, c \big(z_{ij}(n), \hat z_j \big)  \\
        &= \sum_{j \in [J]} \sum_{i \in \cI_j^+} q_{ij}^\star \left( 1 - \frac{|\cI_j^\infty|}{n} \right) c \left( \hat z_j + \frac{\xi_{ij}^\star}{q_{ij}^\star}, \hat z_j \right) + \sum_{j \in [J]} \sum_{i \in \cI_j^\infty} \frac{p_j}{n} c \left( \hat z_j + n \frac{\xi_{ij}^\star}{p_j}, \hat z_j \right) \\
        &\leq \sum_{j \in [J]} \sum_{i \in \cI_j^+} q_{ij}^\star c \left( \hat z_j + \frac{\xi_{ij}^\star}{q_{ij}^\star}, \hat z_j \right) + \sum_{j \in [J]} \sum_{i \in \cI_j^\infty} \lim_{n \to \infty} \frac{p_j}{n} c \left( \hat z_j + n\frac{\xi_{ij}^\star}{p_j}, \hat z_j \right) \\
        &= \sum_{j \in [J]} \sum_{i \in \cI_j} q_{ij}^\star c \left( \hat z_j + \frac{\xi_{ij}^\star}{q_{ij}^\star}, \hat z_j \right) \leq \eps,
    \end{align*}
    where the first inequality holds because the transportation plan that moves mass~$q_{ij}(n)$ from~$\hat z_j$ to~$z_{ij}(n)$ for all~$j\in[J]$ and~$i\in\cI^+_j\cup \cI^\infty_j$ is an admissible coupling of~$\hat{\P}$ and~$\Q(n)$, and the second inequality holds because the transportation cost function is non-negative and convex (see Assumption~\ref{assumption:nash:convexity}\,\ref{assumption:nash:cost}), which implies that both terms in the second line of the above expression are non-decreasing in~$n$. The last equality follows from the definition of the perspective function~$q_{ij}^\star c(\hat z_j + \xi_{ij}^\star/q_{ij}^\star, \hat z_j)$ for~$q_{ij}^\star=0$, and the last inequality uses the constraints of~\eqref{eq:dual}. Hence, $\Q(n)\in \B_{\eps}(\hat{\P})$ for all~$n > \underline n$. It remains to be shown that~$\Q(n)$ is asymptotically optimal in~\eqref{eq:dual}. Using a similar reasoning as in~\eqref{eq:sandwich}, we obtain
    \begin{align*}
        \sup_{\Q \in \B_\eps(\hat{\P})}& \inf_{\theta \in \Theta} \; \E_{Z \sim \Q} [\ell(\theta, Z)] 
        = \inf_{\theta \in \Theta} \sup_{\Q \in \B_\eps(\hat{\P})} \, \E_{Z\sim\Q} [\ell(\theta, Z)] 
        \geq \inf_{\theta \in \Theta} \lim_{n \to \infty} 
        \E_{Z\sim\Q(n)} [\ell(\theta, Z)] \\
        &\geq \inf_{\theta \in \Theta} \lim_{n \to \infty} \sum_{j \in [J]} \sum_{i \in \cI_j^+ \cup \cI_j^\infty} q_{ij}(n) \ell_i \left(\theta, z_{ij}(n) \right) \\&= \inf_{\theta \in \Theta} \sum_{j \in [J]} \sum_{i \in \cI_j^+ \cup \cI_j^\infty} q_{ij}^\star \ell_i (\theta, \hat z_j + \xi_{ij}^\star / q_{ij}^\star ) \\
        &= \inf_{\theta \in \Theta} \sum_{j \in [J]} \sum_{i \in [I]} q_{ij}^\star \ell_i (\theta, \hat z_j + \xi_{ij}^\star / q_{ij}^\star ) \\
        & \geq \left\{ \begin{array}{cl}
            \max & \DS - \sum_{j \in [J]} \sum_{i \in [I]} q_{ij}^\star \ell_i^{*1}(\alpha_{ij}/q_{ij}^\star, \hat z_j + \xi^\star_{ij} / q^\star_{ij}) - \sum_{l \in [L]} \nu_l g^*_l(\beta_l / \nu_l) \\[1ex]
            \st & \alpha_{ij} \in \R^m, ~ \beta_l \in \R^m, ~ \nu_l \in \R_+ \quad \forall i \in \cI_j^+ \cup \cI_j^\infty, \, j \in [J], \, l \in [L] \\[1ex]
            & \DS \sum_{j \in [J]} \sum_{i \in \cI_j^+ \cup \cI_j^\infty} \alpha_{ij} + \sum_{l \in [L]} \beta_l = 0
        \end{array}
        \right. \\
        & \geq - \sum_{i \in [I]} \sum_{j \in [J]} q^\star_{ij} \ell_i^{*1}(\alpha^\star_{ij}/q_{ij}^\star, \hat z_j + \xi^\star_{ij} / q^\star_{ij}) - \sum_{l \in [L]} \nu^\star_l g^*_l(\beta^\star_l / \nu^\star_l) \\
        & = \inf_{\theta \in \Theta} \sup_{\Q \in \B_\eps(\hat{\P})} \E_{Z \sim \Q} [\ell(\theta, Z)] 
        \geq \sup_{\Q \in \B_\eps(\hat{\P})} \inf_{\theta \in \Theta} ~ \E_{Z \sim \Q} [\ell(\theta, Z)].
    \end{align*}
    Here, the first equality follows from Theorem~\ref{theorem:minimax}\,\ref{theorem:minimax:minsup}, which applies for the following reasons. First, $\E_{Z\sim\Q}[\ell(\theta, Z)]$ is lower semicontinuous in~$\theta$ for every $\Q \in \B_\eps(\hat \P)$ by Fatou's lemma, which may be used thanks to Assumptions~\ref{assumption:loss}\,\ref{assumption:loss:Fatou} and~\ref{assumption:nash:convexity}\,\ref{assumption:nash:loss}. In addition, $\E_{Z \sim \Q} [\ell(\theta, Z)]$ is inf-compact in~$\theta\in\Theta$ for some $\Q \in \B_\eps(\hat \P)$ thanks to Assumption~\ref{assumption:nash:regularity}. Finally, we have $\sup_{\Q\in \B_\eps(\hat{\P})}\E_{Z\sim\Q}[\ell(\theta, Z)]<\infty$ thanks to Assumption~\ref{assumption:regularity}\,\ref{assumption:regularity:infty}. The first and second inequalities in the above expression hold because~$\Q(n)\in \B_{\eps}(\hat{\P})$ for all~$n > \underline n$ and because~$\ell$ majorizes~$\ell_i$ for all~$i\in[I]$, respectively. 
    The second equality holds because~$\ell_i(\theta,z)$ adopts only finite values and is concave and thus also continuous in~$z$ for every~$\theta\in\Theta$ and~$i\in[I]$. The third equality exploits the properties of perspective functions. The third inequality follows from weak duality; see also problem~\eqref{eq:outer:max}. The fourth inequality holds because $(\{\alpha_{ij}^\star\}_{i,j}, \{\nu_l^\star, \beta_l^\star\}_{l})$ is feasible in the emerging dual maximization problem. The last equality exploits our insight from Step~1 whereby the optimal values of the primal DRO problem~\eqref{eq:dro} and the finite convex program~\eqref{eq:dual} match, and the last inequality follows from weak duality. Again, we may conclude that all inequalities in the above expression are in fact equalities, which proves that the optimal values of the primal and dual DRO problems~\eqref{eq:dro} and~\eqref{eq:dual:dro} coincide and that $\Q(n)$ is asymptotically optimal in~\eqref{eq:dual:dro}.
\end{proof}

The proof of Proposition~\ref{proposition:dual} reveals that if Assumptions~\ref{assumption:loss}\,\ref{assumption:loss:Fatou}, \ref{assumption:regularity}\,\ref{assumption:regularity:infty}, \ref{assumption:nash:reference}, \ref{assumption:nash:convexity}, \ref{assumption:slater} and~\ref{assumption:nash:regularity} hold and if~$\eps > 0$, then the infimum of the primal DRO problem~\eqref{eq:dro} equals the supremum of the dual DRO problem~\eqref{eq:dual:dro}. Hence, strong duality holds. 
The proof of Proposition~\ref{proposition:dual} also reveals that the convex reformulation~\eqref{eq:dual} of the dual DRO problem~\eqref{eq:dual:dro} is always solvable and that any maximizer of~\eqref{eq:dual} can be used to construct a least favorable distribution that is optimal in~\eqref{eq:dual:dro}, if $\cI^\infty_j=\emptyset$ for all~$j\in[J]$, or a sequence of distributions that are {\em asymptotically} optimal in~\eqref{eq:dual:dro}, if $\cI^\infty_j\neq \emptyset$ for some~$j\in[J]$. Finally, the proof of Proposition~\ref{proposition:dual} shows that Assumption~\ref{assumption:nash:regularity} can be replaced with the following condition.

\begin{assumption}
    \label{assumption:nash:regularity2}
    At least one of the following two conditions is satisfied: (i) $\E_{Z \sim \Q} [\ell(\theta, Z)]$ is inf-compact in~$\theta\in\Theta$ for some $\Q\in \B_\eps(\hat \P)$ or (ii) the index set $\cI_j^\infty = \{ i \in [I] : q_{ij}^\star = 0,\, \xi_{ij}^\star \neq 0 \}$ is empty for every $j\in[J]$, where $(\{q_{ij}^\star, \xi_{ij}^\star, \alpha_{ij}^\star\}_{i,j}, \{\nu_l^\star, \beta_l^\star\}_{l})$ is a maximizer of problem~\eqref{eq:dual}.
\end{assumption}

The next theorem formalizes the above insights, thus identifying conditions under which one can compute Nash equilibra for the DRO problem~\eqref{eq:dro} by solving the finite convex programs~\eqref{eq:primal} and~\eqref{eq:dual}.

\begin{theorem}[Computing Nash equilibria]
    \label{theorem:nash:computation}
    If Assumptions~\ref{assumption:loss}\,\ref{assumption:loss:Fatou}, \ref{assumption:regularity}\,\ref{assumption:regularity:infty}, \ref{assumption:nash:reference}, \ref{assumption:nash:convexity}, \ref{assumption:slater} and~\ref{assumption:nash:regularity} hold and if~$\eps > 0$, then the optimal values of the primal and dual DRO problems~\eqref{eq:dro} and~\eqref{eq:dual:dro} match, and the following~hold.
    \begin{enumerate}[label=(\roman*)]
        \item \label{theorem:nash:computation:theta}
        If $(\theta^\star, \lambda^\star, \{ s_j^\star \}_j, \{ \zeta^{\ell \star}_{ij}, \zeta^{c \star}_{ij} \}_{i,j}, \{ \tau_{ijk}^\star, \zeta^{f \star}_{ijk} \}_{i,j,k} )$ solves~\eqref{eq:primal}, then~$\theta^\star$ solves the primal DRO problem~\eqref{eq:dro}.
        \item \label{theorem:nash:computation:Q}
        If $(\{\beta_l^\star\}_l, \{\nu_l^\star\}_l, \{q_{ij}^\star\}_{i,j}, \{\xi_{ij}^\star\}_{i,j}, \{\alpha_{ij}^\star\}_{i,j})$ solves~\eqref{eq:dual} with $\cI_j^\infty=\emptyset$ for every~$j\in[J]$, then the discrete distribution $\Q^\star = \sum_{j \in [J]} \sum_{i \in \cI_j^+} q_{ij}^\star \delta_{\hat z_j+{\xi_{ij}^\star}/{q_{ij}^\star}}$ solves the dual DRO problem~\eqref{eq:dual:dro}.
    \end{enumerate}
\end{theorem}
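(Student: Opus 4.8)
The plan is to obtain the theorem as an essentially immediate consequence of Propositions~\ref{proposition:primal} and~\ref{proposition:dual}, whose proofs already supply every ingredient. The claimed equality of the optimal values of the primal and dual DRO problems~\eqref{eq:dro} and~\eqref{eq:dual:dro} was in fact established inside the proof of Proposition~\ref{proposition:dual}: under the stated hypotheses both problems share the common optimal value of the finite convex program~\eqref{eq:dual}, as witnessed by the chain of (in)equalities in~\eqref{eq:sandwich} and by its analogue for the case $\cI_j^\infty\neq\emptyset$. Nothing further needs to be shown for this first part; I would simply cite that portion of the proof of Proposition~\ref{proposition:dual}.

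For assertion~\ref{theorem:nash:computation:theta}, I would invoke Proposition~\ref{proposition:primal} with the feasible set~$\Theta$ replaced by the singleton~$\{\theta^\star\}$---a substitution already used in Step~1 of the proof of Proposition~\ref{proposition:dual}, and legitimate because Proposition~\ref{proposition:primal} only invokes Assumptions~\ref{assumption:nash:reference}, \ref{assumption:nash:convexity} and~\ref{assumption:slater}\,\ref{assumption:slater:Z}, none of which concerns~$\Theta$. This shows that the infimum of~\eqref{eq:primal} subject to the additional constraint $\theta=\theta^\star$ equals $\sup_{\Q\in\B_\eps(\hat{\P})}\E_{Z\sim\Q}[\ell(\theta^\star,Z)]$. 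Since the given tuple $(\theta^\star,\lambda^\star,\dots)$ is optimal in~\eqref{eq:primal}, its objective value equals the infimum of~\eqref{eq:primal}, which by Proposition~\ref{proposition:primal} (now with the original~$\Theta$) equals the infimum of~\eqref{eq:dro}; and since this tuple is feasible for the $\theta^\star$-restricted problem, the infimum of~\eqref{eq:dro} is therefore at least $\sup_{\Q}\E_{Z\sim\Q}[\ell(\theta^\star,Z)]$. The reverse inequality $\sup_{\Q}\E_{Z\sim\Q}[\ell(\theta^\star,Z)]\ge\inf_{\theta\in\Theta}\sup_{\Q}\E_{Z\sim\Q}[\ell(\theta,Z)]$ is trivial, so all three quantities coincide, meaning precisely that $\theta^\star$ attains the infimum in~\eqref{eq:dro}.

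For assertion~\ref{theorem:nash:computation:Q}, I would quote the part of Step~2 of the proof of Proposition~\ref{proposition:dual} that handles the case $\cI_j^\infty=\emptyset$ for all $j\in[J]$: there the sandwich argument~\eqref{eq:sandwich} shows directly that $\Q^\star=\sum_{j\in[J]}\sum_{i\in\cI_j^+}q_{ij}^\star\,\delta_{\hat z_j+\xi_{ij}^\star/q_{ij}^\star}$ lies in~$\B_\eps(\hat{\P})$ and attains the supremum of the dual DRO problem~\eqref{eq:dual:dro}. I do not anticipate a genuine obstacle, since the theorem merely repackages the two propositions; the only step demanding a little care is the inequality chain in assertion~\ref{theorem:nash:computation:theta}, where one must keep track of which minimizations run over subsets of the decision variables so that the inequalities point in the intended direction.
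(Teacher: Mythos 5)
Your proposal is correct and follows essentially the same route as the paper, whose entire proof of this theorem is the one-line remark that the claim follows from the proofs of Propositions~\ref{proposition:primal} and~\ref{proposition:dual} and the surrounding discussion; you have simply made explicit the bookkeeping (the restriction of Proposition~\ref{proposition:primal} to $\Theta=\{\theta^\star\}$ for part~(i), and the sandwich argument~\eqref{eq:sandwich} for part~(ii) and for the equality of optimal values) that the paper leaves implicit. The only minor imprecision is the claim that none of the cited assumptions concerns~$\Theta$ (Assumption~\ref{assumption:nash:convexity}\,\ref{assumption:nash:set:Theta} does), but the substitution $\Theta=\{\theta^\star\}$ is harmless since a singleton trivially satisfies that assumption, exactly as in Step~1 of the paper's proof of Proposition~\ref{proposition:dual}.
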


\begin{proof}
The claim follows from the proofs of Propositions~\ref{proposition:primal} and~\ref{proposition:dual} and the above discussion.
\end{proof}

The conditions of Theorem~\ref{theorem:nash:computation} do not ensure that~\eqref{eq:primal} is solvable and, even though they guarantee the solvability of~\eqref{eq:dual}, they do not ensure that~\eqref{eq:dual} has a solution with~$\cI_j^\infty=\emptyset$ for every~$j\in[J]$; see the proofs of Propositions~\ref{proposition:primal} and~\ref{proposition:dual}. However, Theorem~\ref{theorem:nash:computation} implies that if~\eqref{eq:primal} is solvable and~\eqref{eq:dual} has a solution with~$\cI^\infty_j=\emptyset$ for all~$J\in [J]$, then the DRO problem~\eqref{eq:dro} admits a Nash equilibrium $(\theta^\star,\Q^\star)$, which can be computed from the solutions of the finite convex programs~\eqref{eq:primal} and~\eqref{eq:dual}. The following lemma identifies easily checkable sufficient conditions for problem~\eqref{eq:primal} to be solvable.

\begin{lemma}[Existence of~$\theta^\star$]
\label{lem:solvability-primal}
    Suppose that Assumptions~\ref{assumption:loss}\,\ref{assumption:loss:Fatou}, \ref{assumption:regularity}\,\ref{assumption:regularity:infty}, \ref{assumption:nash:reference}, \ref{assumption:nash:convexity}, \ref{assumption:slater} and~\ref{assumption:nash:regularity} hold and that~$\eps > 0$. Then, the finite convex program~\eqref{eq:primal} is solvable if there exists~$\Q\in \B_\eps(\hat \P)$ such that $\E_{Z \sim \Q} [\ell(\theta, Z)]$ is inf-compact in~$\theta\in\Theta$ or if~\eqref{eq:dual} admits a Slater point. 
\end{lemma}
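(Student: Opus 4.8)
The plan is to build on Proposition~\ref{proposition:dual}, whose proof already establishes, under the hypotheses of this lemma, that the finite convex programs~\eqref{eq:primal} and~\eqref{eq:dual} are strong duals (share the same, finite optimal value) and that~\eqref{eq:dual} is solvable; what remains is to show that the infimum in~\eqref{eq:primal} is attained under each of the two stated sufficient conditions. A reduction common to both cases comes first: for fixed $(\theta,\lambda)\in\Theta\times\R_+$, partially minimizing~\eqref{eq:primal} over the auxiliary variables $s_j$, $\zeta^{\ell}_{ij}$, $\zeta^{c}_{ij}$, $\zeta^{f}_{ijk}$ and $\tau_{ijk}$ collapses the problem to $h(\theta,\lambda) := \lambda\eps + \sum_{j\in[J]} p_j\, \ell_c(\theta,\lambda,\hat z_j)$, and this partial minimization is \emph{attained} whenever $h(\theta,\lambda)<\infty$. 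This is precisely the content of the explicit convex duality result used in the proof of Proposition~\ref{proposition:primal} (valid thanks to Assumption~\ref{assumption:slater}\,\ref{assumption:slater:Z} and the finiteness of the saddle functions $\ell_i$ guaranteed by Assumption~\ref{assumption:nash:convexity}\,\ref{assumption:nash:loss}). The function $h$ is jointly convex and lower semicontinuous on $\Theta\times\R_+$ (a pointwise supremum over $z\in\cZ$ of the jointly convex, lower semicontinuous maps $(\theta,\lambda)\mapsto\ell(\theta,z)-\lambda c(z,\hat z_j)$), and $\eqref{eq:primal}$ is solvable as soon as $h$ attains its infimum over $\Theta\times\R_+$.

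For the first sufficient condition, suppose there is $\Q_0\in\B_\eps(\hat{\P})$ for which $\E_{Z\sim\Q_0}[\ell(\theta,Z)]$ is inf-compact in~$\theta$. By Proposition~\ref{proposition:strong:duality} (whose hypotheses follow from Assumptions~\ref{assumption:nash:reference} and~\ref{assumption:nash:convexity}, cf.\ the proof of Proposition~\ref{proposition:primal}) and the discreteness of~$\hat{\P}$, we have $\inf_{\lambda\geq 0} h(\theta,\lambda)=\sup_{\Q\in\B_\eps(\hat{\P})}\E_{Z\sim\Q}[\ell(\theta,Z)]$, so $h(\theta,\lambda)\geq\E_{Z\sim\Q_0}[\ell(\theta,Z)]$; moreover, since $c(\hat z_j,\hat z_j)=0$, we also have $\ell_c(\theta,\lambda,\hat z_j)\geq\ell(\theta,\hat z_j)$ and hence $h(\theta,\lambda)\geq\lambda\eps+\E_{Z\sim\hat{\P}}[\ell(\theta,Z)]$. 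The first bound confines the $\theta$-component of every sublevel set of $h$ to a fixed compact subset of $\Theta$; on that compact set the lower semicontinuous convex function $\theta\mapsto\E_{Z\sim\hat{\P}}[\ell(\theta,Z)]$ is bounded below, so the second bound together with $\eps>0$ confines the $\lambda$-component to a bounded interval. Thus the sublevel sets of $h$ are bounded; being also closed (as $h$ is lower semicontinuous), they are compact, so $h$ is inf-compact, and its infimum is finite (bounded above by Assumption~\ref{assumption:regularity}\,\ref{assumption:regularity:infty}, and below because $\E_{Z\sim\Q_0}[\ell(\cdot,Z)]$ attains its minimum). Weierstrass' theorem yields a minimizer $(\theta^\star,\lambda^\star)$ with $h(\theta^\star,\lambda^\star)<\infty$, which forces $\ell_c(\theta^\star,\lambda^\star,\hat z_j)<\infty$ for every $j$; completing this pair with the minimizing $s_j,\zeta,\tau$ furnished by the first reduction yields an optimal solution of~\eqref{eq:primal}. (Equivalently, under this condition Theorem~\ref{theorem:minimax}\,\ref{theorem:minimax:minsup} already supplies a minimizer $\theta^\star$ of the primal DRO problem~\eqref{eq:dro}, after which only the existence of an optimal $\lambda^\star\geq 0$ remains, a one-dimensional coercive convex minimization since $\eps>0$ and $\ell_c(\theta^\star,\cdot,\hat z_j)$ is bounded below.)

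For the second sufficient condition, the key observation is that the chain of partial dualizations together with the application of Sion's minimax theorem~\citep{sion1958general} in the proof of Proposition~\ref{proposition:dual} exhibits~\eqref{eq:dual} precisely as the Lagrangian dual of the convex program~\eqref{eq:primal} (with the intermediate program~\eqref{eq:inner:sup} playing the role of the inner maximization over the distributional variable, and~\eqref{eq:outer:max} the role of the subsequent dualization over~$\theta$). A Slater point of~\eqref{eq:dual} renders this dual strictly feasible; since the common optimal value of~\eqref{eq:primal} and~\eqref{eq:dual} is finite (it is bounded above by Assumption~\ref{assumption:regularity}\,\ref{assumption:regularity:infty} and~\eqref{eq:dual} is solvable with finite objective value at the Slater point), standard convex-duality arguments apply: strict feasibility of the dual places $0$ in the interior of the domain of the optimal-value function of the dual perturbation, which forces the solution set of the primal to be nonempty. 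Hence~\eqref{eq:primal} attains its infimum.

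I expect the delicate point to lie in the second case, namely in verifying that~\eqref{eq:primal} is genuinely (up to harmless reparametrizations) the Lagrangian dual of~\eqref{eq:dual}, so that ``strict feasibility of~\eqref{eq:dual}'' can be converted into ``attainment in~\eqref{eq:primal}''. This requires carefully composing the two partial dualizations---over the distributional variable and over~$\theta$---that separate~\eqref{eq:primal} from~\eqref{eq:dual} in the proof of Proposition~\ref{proposition:dual}, tracking the auxiliary vectors $\zeta^{\ell}_{ij},\zeta^{c}_{ij},\zeta^{f}_{ijk},\tau_{ijk}$ on one side against $\alpha_{ij},\beta_l,\nu_l$ on the other, and invoking the compactness of the feasible set of the intermediate program~\eqref{eq:inner:sup}. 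The first case is comparatively routine, the only minor subtleties being the boundedness of the $\lambda$-component of the sublevel sets of $h$ and the behaviour of the perspective terms in~\eqref{eq:primal} at $\lambda=0$.
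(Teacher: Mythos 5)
Your proposal is correct and follows essentially the same route as the paper: for the first case, both arguments rest on the attainment of the auxiliary minimization (guaranteed by the Slater condition for $\cZ$ via the explicit duality with~\eqref{eq:inner:sup}) combined with compactness in~$\theta$ inherited from inf-compactness, the only difference being that you establish joint inf-compactness of the reduced objective in~$(\theta,\lambda)$ whereas the paper first restricts to a compact sublevel set of~$\Theta$ and then invokes solvability of the inner problem~\eqref{eq:inner:inf} for each fixed~$\theta$. For the second case, the paper simply cites \cite[Theorem~2\,(i)]{zhen2023unified} for the implication ``dual Slater point $\Rightarrow$ primal solvability,'' which is exactly the standard perturbation argument you spell out, including the duality pairing of~\eqref{eq:primal} and~\eqref{eq:dual} that you rightly flag as the point requiring care.
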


\begin{proof}
Assume first that~$\E_{Z \sim \Q} [\ell(\theta, Z)]$ is inf-compact in~$\theta\in\Theta$ for some $\Q\in \B_\eps(\hat \P)$. This implies that the worst-case expected loss $\sup_{\Q\in \B_\eps(\hat \P)}\E_{Z \sim \Q} [\ell(\theta, Z)]$ is also inf-compact in~$\theta\in\Theta$. Without loss of generality, we may thus assume that~$\Theta$ is compact. Otherwise, we can simply replace $\Theta$ with a non-empty compact sublevel set of the worst-case expected loss. Note that if the worst-case expected loss is identically equal to~$\infty$ such that {\em all} of its sublevel sets are empty, then we can replace~$\Theta$ with {\em any} non-empty compact subset of~$\Theta$. From the proof of Proposition~\ref{proposition:dual} we know that~\eqref{eq:inner:inf} and~\eqref{eq:inner:sup} are dual convex programs and that~\eqref{eq:inner:sup} admits a Slater point. This implies via~\cite[Theorem~2\,(i)]{zhen2023unified} that~\eqref{eq:inner:inf} is solvable irrespective of~$\theta$. As~\eqref{eq:primal} minimizes the minimum of~\eqref{eq:inner:inf} across all~$\theta\in\Theta$ and as~$\Theta$ is compact, we may conclude that problem~\eqref{eq:primal} is solvable. Assume next that~\eqref{eq:dual} admits a Slater point. As~\eqref{eq:dual} is dual to~\eqref{eq:primal}, it follows again from~\cite[Theorem~2\,(i)]{zhen2023unified} that~\eqref{eq:primal} must be solvable.
\end{proof}

The next lemma identifies conditions ensuring that~\eqref{eq:dual} has a solution with~$\cI^\infty_j=\emptyset$ for all~$J\in [J]$.

\begin{lemma}[Existence of $\Q^\star$]
\label{lem:solvability-dual}
    Suppose that Assumptions~\ref{assumption:loss}\,\ref{assumption:loss:Fatou}, \ref{assumption:regularity}\,\ref{assumption:regularity:infty}, \ref{assumption:nash:reference}, \ref{assumption:nash:convexity}, \ref{assumption:slater} and~\ref{assumption:nash:regularity} hold and that~$\eps > 0$. Then, the finite convex program \eqref{eq:dual} has a solution with~$\cI^\infty_j=\emptyset$ for all~$J\in [J]$ if the support set~$\cZ$ is compact or if the transportation cost function~$c(z,\hat z)$ grows superlinearly with~$z$.
\end{lemma}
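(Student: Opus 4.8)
The plan is to show that, under either of the two stated conditions, \emph{every} maximizer of the finite convex program~\eqref{eq:dual} already has $\cI_j^\infty=\emptyset$ for all $j\in[J]$; since~\eqref{eq:dual} is solvable, the lemma then follows. Solvability of~\eqref{eq:dual} is not a new fact: the hypotheses of this lemma are exactly those of Proposition~\ref{proposition:dual}, and Step~1 of the proof of that proposition shows that the maximum in~\eqref{eq:dual} is attained. So I would begin by invoking Proposition~\ref{proposition:dual} to fix an arbitrary maximizer $(\{q_{ij}^\star,\xi_{ij}^\star,\alpha_{ij}^\star\}_{i,j},\{\nu_l^\star,\beta_l^\star\}_l)$ of~\eqref{eq:dual}.

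Next, I would argue by contradiction: suppose $\cI_j^\infty\neq\emptyset$ for some $j\in[J]$ and pick $i\in\cI_j^\infty$, so that $q_{ij}^\star=0$ and $\xi_{ij}^\star\neq 0$. This is precisely the configuration analyzed in the second half of the proof of Proposition~\ref{proposition:dual}, whose reasoning I would reuse. First, reading the constraint $q_{ij}^\star f_k(\hat z_j+\xi_{ij}^\star/q_{ij}^\star)\le 0$ of~\eqref{eq:dual} through our perspective-function conventions at $q_{ij}^\star=0$ forces $\xi_{ij}^\star$ to be a recession direction of the closed convex set $\cZ$ (by~\cite[Theorem~8.6]{rockafellar1970convex}; see also~\cite[Lemma~C.8]{zhen2023unified}). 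A nonempty compact set admits no nonzero recession direction, so if $\cZ$ is compact this contradicts $\xi_{ij}^\star\neq 0$, and we are done in the first case.

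Second, suppose instead that $c(\cdot,\hat z)$ grows superlinearly in its first argument. Then the conjugate $c^{*1}(\cdot,\hat z_j)$ is finite-valued on all of $\R^d$, i.e.\ $\dom(c^{*1}(\cdot,\hat z_j))=\R^d$. By the convention for the perspective function at $q_{ij}^\star=0$, the term $q_{ij}^\star\, c(\hat z_j+\xi_{ij}^\star/q_{ij}^\star,\hat z_j)$ equals the recession function of $\xi\mapsto c(\hat z_j+\xi,\hat z_j)$ evaluated at $\xi_{ij}^\star$, namely $\delta^*_{\R^d}(\xi_{ij}^\star)=+\infty$ since $\xi_{ij}^\star\neq 0$ (cf.~\cite[Remark~C.5]{zhen2023unified}). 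Hence the left-hand side of the transportation-budget constraint in~\eqref{eq:dual} evaluates to $+\infty>\eps$, contradicting feasibility of the maximizer. In either case the assumption $\cI_j^\infty\neq\emptyset$ is untenable, so $\cI_j^\infty=\emptyset$ for every $j\in[J]$, which proves the lemma.

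The only delicate points I anticipate are the consistent use of the perspective-function conventions at scaling $0$—which simultaneously governs how the constraint $q f_k(\cdot)\le 0$ and the transportation-cost term in~\eqref{eq:dual} are interpreted—and the standard implication that superlinear growth of $c(\cdot,\hat z)$ makes $\dom(c^{*1}(\cdot,\hat z))=\R^d$. Since the bulk of this bookkeeping has already been carried out in the proof of Proposition~\ref{proposition:dual}, the argument here stays short and essentially amounts to extracting the contrapositive of that proof's case analysis.
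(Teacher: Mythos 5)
Your proposal is correct and follows exactly the route the paper intends: the paper's proof of this lemma simply defers to the proof of Proposition~\ref{proposition:dual}, whose Step~1 gives solvability of~\eqref{eq:dual} and whose second case in Step~2 shows that $\cI_j^\infty\neq\emptyset$ forces $\cZ$ to be non-compact and $c$ to grow at most linearly. You have merely made explicit the contrapositive reading of that case analysis, which is precisely what the paper means by ``immediate consequence.''
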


\begin{proof}
This is an immediate consequence of the proof of Proposition~\ref{proposition:dual}. 
\end{proof}

Under the conditions of Lemma~\ref{lem:solvability-primal}, any solution of~\eqref{eq:primal} gives rise to a robust decision~$\theta^\star$, and under the conditions of Lemma~\ref{lem:solvability-dual}, any solution of~\eqref{eq:dual} gives rise to a least favorable distribution~$\Q^\star$. By construction, $\theta^\star$ and~$\Q^\star$ form a Nash equilibrium for the DRO problem~\eqref{eq:dro}. The constructive results of this section thus strengthen and complement the existential results of Theorem~\ref{theorem:minimax}. We emphasize, in particular, that the conditions of Lemma~\ref{lem:solvability-dual} are reminiscent of the growth condition specified in Assumption~\ref{assumption:cost}\,\ref{assumption:cost:lb}, which was instrumental in Theorem~\ref{theorem:minimax}\,\ref{theorem:minimax:maxinf} to prove that the dual DRO problem~\eqref{eq:dual:dro} is solvable. Similarly, the conditions of Lemma~\ref{lem:solvability-primal} are reminiscent of the inf-compactness condition that was needed in Theorem~\ref{theorem:minimax}\,\ref{theorem:minimax:minsup} to prove that the primal DRO problem~\eqref{eq:dro} is solvable.

The results of this section readily extend to decision problems with loss functions that display an arbitrary dependence on some of the uncertain problem parameters provided that nature cannot perturb their marginal distribution. Such decision problems naturally arise in machine learning applications (see Section~\ref{section:numerical}). Propositions~\ref{proposition:primal} and~\ref{proposition:dual} as well as Theorem~\ref{theorem:nash:computation} remain valid in this generalized setting with obvious minor modifications but at the expense of a higher notational overhead; see Appendix~\ref{appendix:nash:ml}.

Focusing on discrete reference distributions is the key enabler for the finite convex reformulation results of this section. Indeed, these results critically rely on the insight that if~$\hat\P$ is discrete, then we may restrict $\B_\eps(\hat \P)$ to contain only discrete distributions without reducing the optimal value of the DRO problem~\eqref{eq:dro} and its dual~\eqref{eq:dual:dro}. Searching over discrete distributions is significantly easier than searching over arbitrary distributions in~$\B_\eps(\hat \P)$. An analogous reasoning is used in \citep{shafieezadeh2018wasserstein,nguyen2023bridging}, where~$\hat\P$ is an elliptical distribution and~$\ell(\theta, z)$ is quadratic in~$z$. In that case, the optimal values of the primal and dual DRO problems do not change if one restricts~$\B_\eps(\hat \P)$ to contain only elliptical distributions of the same type as~$\hat\P$. All these elliptical distributions are uniquely determined by their---finite-dimensional---mean vectors and covariance matrices, thus giving rise to tractable convex reformulations.

\section{Regularization by Robustification}
\label{section:regularization}

It is well known that many regularization schemes in statistics and machine learning admit a robustness interpretation. Apart from carrying an aesthetic appeal, such interpretations often lead to generalization bounds for the optimizers of regularized learning models. This prompts us to seek a comprehensive theory of regularization and optimal transport-based robustification that unifies various specialized results scattered across the existing literature and that rationalizes, for the first time, a broad range of higher-order variation regularization schemes.
In Section~\ref{section:highorder:variation} we first study the {\em primal} regularizing effects of robustification by relating the worst-case expected loss to the expected loss under the reference distribution adjusted by Lipschitz and higher-order variation regularization terms. In Section~\ref{section:regularization:linear} we then study the {\em dual} regularizing effects of robustification by relating the worst-case expected loss to the expected value of a regularized ({\em e.g.}, smoothed) loss function under the reference distribution.

\subsection{Primal Regularizating Effects of Robustification}
\label{section:highorder:variation}

In this section we will show that, under natural regularity conditions, the worst-case expected loss across all distributions in a generic optimal transport-based ambiguity set is bounded above by the sum of the expected loss under the reference distribution and several regularization terms that penalize $L^p$-norms and Lipschitz moduli of the higher-order derivatives of the loss function. Our results generalize and unify several existing results, which have revealed intimate connections between robustification and gradient regularization~\cite{gao2024wasserstein}, Hessian regularization~\cite{bartl2020robust} and Lipschitz regularization~\citep{esfahani2018data}; see also~\cite{blanchet2019robust,gao2024wasserstein,shafieezadeh2019regularization,shafieezadeh2015distributionally}.

The subsequent discussion focuses on the inner maximization problem in~\eqref{eq:dro}, where~$\theta$ is fixed. We thus temporarily suppress the dependence of the loss function on~$\theta$ and use the shorthand notation~$\ell(z)$ throughout this section. To exclude trivialities, we assume here that the support set~$\cZ$ is contained in the relative interior of the domain of $\ell(z)$. 
In addition, we will also adopt the following notational conventions. For any~$k \in \Z_+$, we use~$D^k \ell(z)$ to denote the totally symmetric tensor of all $k$-th order partial derivatives of~$\ell(z)$. Accordingly, $D^k \ell(z)[\xi_1, \ldots, \xi_k]$ stands for the directional derivative of $\ell(z)$ along the directions~$\xi_i\in\R^d$ for~$i \in [k]$. If~$\xi_i=\xi$ for all~$i\in[k]$, then we use the shorthand~$D^k \ell(z)[\xi]^k$. Any norm~$\|\cdot\|$ on~$\R^d$ induces a norm on the space of totally symmetric $k$-th order tensors through
\begin{align*}
\|D^k \ell(z)\| 
&= \sup_{\xi_1, \ldots, \xi_k} \left\{ \left| D^k \ell(z)[\xi_1, \ldots, \xi_k] \right|: \|\xi_i\| \leq 1 ~ \forall i \in [k] \right\} = \sup_{\xi} \left\{ \left| D^k \ell(z)[\xi]^k \right|: \| \xi \| \leq 1 \right\} ,
\end{align*}
where the second equality follows from the symmetry of~$D^k \ell(z)$ \citep[Satz~1]{Banach1938}. By slight abuse of notation, we use the same symbol~$\|\cdot\|$ for the tensor norm induced by the vector norm~$\|\cdot\|$.

The results of this section will depend on the following smoothness conditions.

\begin{assumption}[Smoothness properties of the loss function] ~
    \label{assumption:derivatives}
    \begin{enumerate}[label=(\roman*)]
        \item \label{assumption:differentiability:p:essential}
        The domain of the loss function $\ell(z)$ is convex, $\ell(z)$ is $p$ times continuously differentiable on $\rint(\dom(\ell))$, and there exist~$G>0$ and~$p\in\N$ with $\ell(z) \leq G(1 + \| z \|^{p})$, $\| D^{k}\ell(z) \| \leq G(1 + \| z \|^{p-k})$ for all $k \in [p-1]$, and $\|D^{p}\ell(z)\|\leq G$ for all~$z \in \rint(\dom(\ell))$.
        \item \label{assumption:derivatives:lipschitz:uniform} The tensor $D^k\ell(z)$ of $k$-th order partial derivatives is Lipschitz continuous in~$z$ for all~$k\in[p-1]$ throughout $\rint(\dom(\ell))$.
    \end{enumerate}
\end{assumption}

We are now ready to state the main result of this section.

\begin{theorem}[Regularization by robustification]
    \label{theorem:regularization}
    Suppose that Assumption~\ref{assumption:cost}\,\ref{assumption:cost:integrable} holds. If there exists a norm~$\|\cdot\|$ on~$\R^d$ such that~$c(z,\hat z)$ satisfies Assumption~\ref{assumption:cost}\,\ref{assumption:cost:lb} for $d(z,\hat z) = \|z-\hat z\|$ and such that~$\ell(z)$ satisfies Assumption~\ref{assumption:derivatives}\,\ref{assumption:differentiability:p:essential} with respect to~$\|\cdot\|$, then the worst-case expected loss satisfies
    \begin{align}
    \label{eq:regularization:1}
        \sup_{\Q \in \B_{\eps}(\hat{\P})} \E_{Z \sim \Q} \left[ \ell(Z) \right] 
        \leq \E_{\hat Z \sim \hat{\P}} [\ell(\hat Z)] + \sum_{k=1}^{p-1} \frac{\eps^\frac{1}{p_k}}{k!}\, \E_{\hat Z \sim \hat{\P}} \left[ \| D^{k} \ell(\hat Z)\|^{q_k} \right]^{\frac{1}{q_k}} + \frac{\eps}{p!} \, \lip( D^{p-1}\ell)<\infty, 
    \end{align}
    where ~$p_k = p/k$ and~$q_k = p / (p-k)$ for all $k\in[p-1]$. 
\end{theorem}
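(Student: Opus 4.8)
The plan is to bound the worst-case expected loss \emph{distribution by distribution}, bypassing the dual problem altogether: for each feasible $\Q$ I pick an optimal coupling with the reference distribution, Taylor-expand $\ell$ around the second coordinate, and control the moments of the increment by Hölder's inequality. First I would fix an arbitrary $\Q\in\B_\eps(\hat{\P})$. Assumption~\ref{assumption:continuity}\,\ref{assumption:cost:lsc} ensures via \cite[Theorem~4.1]{villani2008optimal} that the transport problem defining $d_c(\Q,\hat{\P})$ is solvable, so there is a coupling $\pi\in\Pi(\Q,\hat{\P})$ with $\E_\pi[c(Z,\hat Z)]\le\eps$; since $c(z,\hat z)\ge\|z-\hat z\|^p$ by hypothesis (Assumption~\ref{assumption:cost}\,\ref{assumption:cost:lb} applied with $d(z,\hat z)=\|z-\hat z\|$), this gives $\E_\pi[\|Z-\hat Z\|^p]\le\eps$. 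Once the integrability issues of the last paragraph are settled, $\E_{Z\sim\Q}[\ell(Z)]-\E_{\hat Z\sim\hat{\P}}[\ell(\hat Z)]=\E_\pi[\ell(Z)-\ell(\hat Z)]$, so it suffices to estimate the integrand.

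For $p\ge 2$ I would invoke Taylor's theorem with the integral remainder written in terms of the increment of the top-order derivative tensor,
\begin{align*}
  \ell(z)-\sum_{k=0}^{p-1}\frac{1}{k!}D^k\ell(\hat z)[z-\hat z]^k
  =\int_0^1\frac{(1-t)^{p-2}}{(p-2)!}\Big(D^{p-1}\ell(\hat z+t(z-\hat z))-D^{p-1}\ell(\hat z)\Big)[z-\hat z]^{p-1}\,\mathrm{d}t,
\end{align*}
and then use $|D^k\ell(\hat z)[z-\hat z]^k|\le\|D^k\ell(\hat z)\|\,\|z-\hat z\|^k$, the definition of $\lip(D^{p-1}\ell)$, and the elementary identity $\int_0^1(1-t)^{p-2}t\,\mathrm{d}t=\tfrac1{p(p-1)}$ to obtain the pointwise bound
\begin{align*}
  \ell(z)-\ell(\hat z)\le\sum_{k=1}^{p-1}\frac{1}{k!}\,\|D^k\ell(\hat z)\|\,\|z-\hat z\|^k+\frac{1}{p!}\,\lip(D^{p-1}\ell)\,\|z-\hat z\|^p .
\end{align*}
The case $p=1$ is immediate, since Assumption~\ref{assumption:derivatives}\,\ref{assumption:differentiability:p:essential} then forces $\ell$ to be globally $\lip(\ell)$-Lipschitz.

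Next I would integrate this estimate against $\pi$. The $p$-th order term contributes at most $\tfrac{\eps}{p!}\lip(D^{p-1}\ell)$ because $\E_\pi[\|Z-\hat Z\|^p]\le\eps$. For the $k$-th term I apply Hölder's inequality with the conjugate exponents $q_k=p/(p-k)$ and $p_k=p/k$ (note $1/q_k+1/p_k=1$), using that $\hat Z$ has marginal $\hat{\P}$ under $\pi$:
\begin{align*}
  \E_\pi\big[\|D^k\ell(\hat Z)\|\,\|Z-\hat Z\|^k\big]
  \le\E_{\hat Z\sim\hat{\P}}\big[\|D^k\ell(\hat Z)\|^{q_k}\big]^{1/q_k}\,\E_\pi\big[\|Z-\hat Z\|^{p}\big]^{k/p}
  \le\E_{\hat Z\sim\hat{\P}}\big[\|D^k\ell(\hat Z)\|^{q_k}\big]^{1/q_k}\,\eps^{1/p_k}.
\end{align*}
Summing over $k\in[p-1]$ after dividing by $k!$ and adding back $\E_{\hat{\P}}[\ell(\hat Z)]$ reproduces exactly the right-hand side of~\eqref{eq:regularization:1}; since this bound is independent of $\Q$, taking the supremum over $\Q\in\B_\eps(\hat{\P})$ finishes the inequality.

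Finally I would check finiteness and the integrability needed above. Assumption~\ref{assumption:cost}\,\ref{assumption:cost:integrable} together with $c\ge\|\cdot-\cdot\|^p$ yields $\E_{\hat{\P}}[\|\hat Z-\hat z_0\|^p]\le C<\infty$, so $\hat{\P}$ (and, by the triangle inequality for $W_p$ as in the proof of Lemma~\ref{lemma:compactness}, every $\Q\in\B_\eps(\hat{\P})$) has finite $p$-th moment. The growth bounds of Assumption~\ref{assumption:derivatives}\,\ref{assumption:differentiability:p:essential} then give $\|D^k\ell(z)\|^{q_k}\le G^{q_k}(1+\|z\|^{p-k})^{q_k}$, which grows at most like $1+\|z\|^p$ because $(p-k)q_k=p$, and $\ell(z)\le G(1+\|z\|^p)$; they also force $\lip(D^{p-1}\ell)\le\sup_z\|D^p\ell(z)\|\le G<\infty$. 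Hence every term on the right-hand side of~\eqref{eq:regularization:1} is finite. A matching \emph{lower} Taylor estimate of $\ell$ around $\hat z_0$, again dominated by a polynomial of degree $p$ in $\|z-\hat z_0\|$, shows $\E_{\hat{\P}}[\ell(\hat Z)]>-\infty$ and that $\E_\Q[\ell(Z)]$ is well defined, which legitimizes the splitting $\E_\pi[\ell(Z)-\ell(\hat Z)]=\E_\Q[\ell(Z)]-\E_{\hat{\P}}[\ell(\hat Z)]$ used at the outset. I expect the only delicate points to be (i) choosing the Taylor remainder in the precise form that produces the constant $1/p!$ and the Lipschitz modulus $\lip(D^{p-1}\ell)$ rather than a cruder $\sup\|D^p\ell\|$, and (ii) the bookkeeping that aligns the Hölder exponents $q_k,p_k$ with the powers $\eps^{1/p_k}$; the remaining steps are routine.
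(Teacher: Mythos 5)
Your proposal is correct and follows essentially the same route as the paper: fix $\Q\in\B_\eps(\hat{\P})$, pass to an optimal coupling, Taylor-expand $\ell(z)-\ell(\hat z)$ to order $p$, and apply H\"older's inequality with exponents $p_k=p/k$, $q_k=p/(p-k)$ term by term. The only (immaterial) difference is that you use the integral-form remainder expressed through the increment of $D^{p-1}\ell$, while the paper uses the Lagrange remainder and then identifies $\sup_{\hat z}\|D^p\ell(\hat z)\|$ with $\lip(D^{p-1}\ell)$; both yield the same constant $1/p!$, and your extra remark on the lower Taylor bound justifying the splitting $\E_\pi[\ell(Z)-\ell(\hat Z)]=\E_\Q[\ell(Z)]-\E_{\hat\P}[\ell(\hat Z)]$ is a welcome bit of added care.
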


\begin{proof}
    Fix any~$\Q\in \B_{\eps}(\hat{\P})$, and use $\pi^\star$ to denote an optimal coupling of $\Q$ and $\hat{\P}$, which exists thanks to~\citep[Theorem~4.1]{villani2008optimal}. By the defining properties of couplings, we have
    \begin{align*}
    \E_{Z \sim \Q} \left[ \ell(Z) \right] - \E_{\hat Z \sim \hat{\P}} [ \ell(\hat Z)] 
    = \int_{\cZ \times \cZ} \left[ \ell(z) - \ell(\hat z)\right] \mathrm{d}\pi^\star (z,\hat z).
    \end{align*}
    Representing the integrand as a Taylor series with Lagrange remainder yields 
    \begin{align*}
    \ell(z) - \ell(\hat z) 
    &= \sum_{k=1}^{p-1} \frac{1}{k!} D^{k} \ell(\hat z)\left[ z - \hat z \right]^k + \frac{1}{p!} D^{p} \ell(\hat z + t\cdot (z-\hat z)) \left[ z - \hat z \right]^p \\ 
    &\leq \sum_{k=1}^{p-1} \frac{1}{k!} \| D^{k} \ell(\hat z)\| \| z - \hat z \|^k + \frac{1}{p!} \|D^{p} \ell(\hat z + t\cdot(z - \hat z)) \| \| z - \hat z \|^p
    \end{align*}
    for some $t \in (0,1)$ that depends (measurably) on~$z$ and~$\hat z$ \citep[Theorem~2.2.5]{krantz2002primer}. The inequality in the second line follows directly from the multi-linearity of tensors and the definition of the tensor norm. Note that~$\ell$ is indeed differentiable at $\hat z+t(z-\hat z)$ because $z,\hat z\in \cZ\subseteq \rint(\dom(\ell))$ and because $\dom(\ell)$ is convex and $\ell$ is $p$ times continuously differentiable on $\rint(\dom(\ell))$ thanks to Assumption~\ref{assumption:derivatives}\,\ref{assumption:differentiability:p:essential}. By Hölder's inequality, the integral of the $k$-th term in the above sum against $\pi^\star$ admits the estimate
    \begin{align*}
    \int_{\cZ \times \cZ} \| D^{k} \ell(\hat z)\| \| z-\hat z \|^k \mathrm{d}\pi^\star(z,\hat z) 
    &\leq \left( \int_{\cZ \times \cZ} \| z-\hat z \|^{p_k} \mathrm{d}\pi^\star(z,\hat z)\right)^\frac{1}{p_k} \left(\int_{\cZ} \| D^{k} \ell(\hat z)\|^{q_k} \mathrm{d}\hat{\P}(\hat z) \right)^{\frac{1}{q_k}}  \\ 
    &\leq \eps^\frac{1}{p_k} \left(\int_{\cZ} \| D^{k} \ell(\hat z)\|^{q_k} \mathrm{d}\hat{\P}(\hat z) \right)^{\frac{1}{q_k}},
    \end{align*}
    where $p_k = p/k$ and $q_k = p / (p-k)$. The second inequality follows from Assumption~\ref{assumption:cost}\,\ref{assumption:cost:lb}, which ensures that $c(z, \hat z) \geq \| z - \hat z\|^p$ for all $z,\hat z\in\cZ$, and from the definition of $\pi^\star$ as an optimal coupling of $\hat{\P}$ and~$\Q\in \B_{\eps}(\hat{\P})$. A similar reasoning based on Hölder's inequality implies that
    \begin{align*}
    \int_{\cZ \times \cZ} \|D^{p}\ell(\hat z + t\cdot(z-\hat z))\| \| z-\hat z \|^p \mathrm{d}\pi^\star(z,\hat z) 
    &\leq \sup_{\tilde z \in \conv(\cZ)} \| D^{p} \ell(\tilde z)\| \int_{\cZ \times \cZ} \| z-\hat z \|^p \mathrm{d}\pi^\star(z,\hat z) \\
    &\leq \eps \sup_{\tilde z \in \conv(\cZ)} \| D^{p} \ell(\tilde z)\| 
    \leq \eps \lip( D^{p-1}\ell),
    \end{align*}
    where the dependence of $t$ on $z$ and $\hat z$ is notationally suppressed to avoid clutter. In summary, the above reasoning readily implies that
    \begin{align*}
        \E_{Z \sim \Q} \left[ \ell(Z) \right] - \E_{\hat Z \sim \hat{\P}} [ \ell(\hat Z)] \leq \sum_{k=1}^{p-1} \frac{\eps^\frac{1}{p_k}}{k!}  \E_{\hat Z \sim \hat{\P}} \left[ \| D^{k} \ell(\hat Z)\|^{q_k} \right]^{\frac{1}{q_k}} + \frac{\eps}{p!} \lip( D^{p-1}\ell). 
    \end{align*}
    Note that the right hand side of the resulting inequality is independent of~$\Q$. Thus, the inequality remains valid if we maximize its left hand side across all~$\Q \in \B_{\eps}(\hat{\P})$, which finally yields the upper bound in~\eqref{eq:regularization:1}. To demonstrate that this upper bound is finite, we observe that
    \begin{align*}
        \E_{\hat Z \sim \hat{\P}} \left[\| D^{k} \ell(\hat Z)\|^{q_k} \right] & \leq \E_{\hat Z \sim \hat{\P}} \left[ G(1+\|\hat Z\|^{p-k})^{q_k}\right] \leq \E_{\hat Z \sim \hat{\P}} \left[ G(2+\|\hat Z\|)^p\right] \\ & \leq G'+ \E_{\hat Z \sim \hat{\P}} \left[ \|\hat Z-\hat z_0\|^p \right] \leq G'+ \E_{\hat Z \sim \hat{\P}} \left[ c(\hat Z,\hat z_0) \right] \leq \infty\quad\forall k\in[p-1],
    \end{align*}
    where the first two inequalities follow from Assumption~\ref{assumption:derivatives}\,\ref{assumption:differentiability:p:essential} and the definition of~$q_k$, respectively. By the triangle inequality and the binomial theorem, there is $G'>0$ with~$G(2+\|z\|)^p\leq  G'(1+\|z-\hat z_0\|^p)$ for all~$z\in\cZ$, and this justifies the third inequality. The fourth inequality exploits Assumption~\ref{assumption:cost}\,\ref{assumption:cost:lb}, which holds for~$d(z,\hat z)=\|z-\hat z\|$, and the fifth inequality follows from Assumption~\ref{assumption:cost}\,\ref{assumption:cost:integrable}. Similarly, Assumption~\ref{assumption:derivatives}\,\ref{assumption:differentiability:p:essential} implies that $\lip( D^{p-1}\ell)\leq G<\infty$. Thus, the upper bound in~\eqref{eq:regularization:1} is finite.
\end{proof}

Theorem~\ref{theorem:regularization} asserts that the worst-case expected loss with respect to a generic optimal transport-based ambiguity set is bounded above by the sum of the expected loss under the reference distribution, $p-1$ different variation regularization terms, and a Lipschitz regularization term. The $k$-th variation regularization term penalizes the $L^{q_k}$-norm of the tensor of $k$-th order partial derivatives, and the Lipschitz regularization term penalizes the Lipschitz modulus of the tensor of $(p-1)$-st order partial derivatives of the loss. Variation regularizers such as regularizers penalizing the gradients, Hessians or tensors of higher-order partial derivatives are successfully used in various applications in machine learning. For example, they emerge in the adversarial training of neural networks~\citep{hein2017formal, jakubovitz2018improving, lyu2015unified, ororbia2017unifying, varga2017gradient, finlay2021scaleable} or in the stabilizing training of generative adversarial networks~\citep{roth2017stabilizing, nagarajan2017gradient, gulrajani2017improved}. In the context of image recovery, regularizers that penalize higher-order partial derivatives are sometimes preferred to total variation regularizers because they prevent undesirable staircase artifacts~\citep{bredies2010total}. For example, the ability of Hessian regularizers (or their finite difference approximations) to mitigate staircase effects has been documented in~\citep{lefkimmiatis2013hessian, lefkimmiatis2013poisson}. More generally, regularizers based on higher-order partial derivatives (or their finite difference approximations) can be used to preserve or enhance various image features such as edges or ridges \citep{hu2014generalized}. Theorem~\ref{theorem:regularization} gives these heuristic regularization schemes a theoretical justification.

The next corollary shows that the estimate~\eqref{eq:regularization:1} can be simplified by upper bounding all variation regularization terms by corresponding Lipschitz regularizers.

\begin{corollary}[Lipschitz regularization]
    \label{corollary:lipschitz-regularization}
    If all assumptions of Theorem~\ref{theorem:regularization} as well as Assumption~\ref{assumption:derivatives}\,\ref{assumption:derivatives:lipschitz:uniform} hold, then the worst-case expected loss satisfies
    \begin{align}
    \label{eq:regularization:2}
        \sup_{\Q \in \B_{\eps}(\hat{\P})} \E_{ Z \sim \Q} [ \ell( Z)] 
        \leq \E_{\hat Z \sim \hat{\P}} [\ell(\hat Z)] + \sum_{k=1}^{p} \frac{\eps^\frac{1}{p_k}}{k!} \lip ( D^{k-1}\ell)<\infty.
    \end{align}
\end{corollary}
\begin{proof}
    Assumption~\ref{assumption:derivatives}\,\ref{assumption:derivatives:lipschitz:uniform} implies that $\|D^k\ell(\hat Z)\|\leq \sup_{\hat z \in \conv(\cZ)} \| D^{k} \ell(\hat z)\| \leq \lip( D^{k-1} \ell)<\infty$ $\hat{\P}$-almost surely for all~$k\in[p-1]$. Substituting these conservative upper bounds into~\eqref{eq:regularization:1} yields~\eqref{eq:regularization:2}.
\end{proof}

We now specialize the results of Theorem~\ref{theorem:regularization} and Corollary~\ref{corollary:lipschitz-regularization} to Wasserstein ambiguity sets. 

\begin{corollary}[Regularization by robustification over Wasserstein balls]
    \label{corollary:Wasserstein-p:regularization}
    If all assumptions of Theorem~\ref{theorem:regularization} hold and $\W_{\eps} (\hat{\P}) = \{ \Q \in \cP(\cZ): W_p ( \Q , \hat{\P} ) \leq \eps \}$ is the $p$-th Wasserstein ball, then
    \begin{subequations}
    \begin{align}
    \label{eq:regularization_wasserstein:1}
        \sup_{\Q \in \W_{\eps}(\hat{\P})} \E_{Z \sim \Q} \left[ \ell(Z) \right] 
        \leq \E_{\hat Z \sim \hat{\P}} [\ell(\hat Z)] + \sum_{k=1}^{p-1} \frac{\eps^k}{k!} \E_{\hat Z \sim \hat{\P}}\left[ \| D^{k} \ell(\hat Z)\|^{q_k} \right]^{\frac{1}{q_k}} + \frac{\eps^p}{p!} \lip(D^{p-1} \ell)<\infty,
    \end{align}
    where $q_k = p / (p-k)$ for all~$k\in[p-1]$. If additionally Assumption~\ref{assumption:derivatives}\,\ref{assumption:derivatives:lipschitz:uniform} holds, then
    \begin{align}
    \label{eq:regularization_wasserstein:2}
        \sup_{\Q \in \W_{\eps}(\hat{\P})} \E_{Z \sim \Q} \left[ \ell(Z) \right] 
        \leq \E_{\hat Z \sim \hat{\P}} [\ell(\hat Z)] + \sum_{k=1}^{p} \frac{\eps^k}{k!} \lip( D^{k-1} \ell)<\infty.
    \end{align}
    \end{subequations}
\end{corollary}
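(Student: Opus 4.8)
The plan is to recognize the $p$-th Wasserstein ball as a particular instance of an optimal transport-based ambiguity set and then to invoke Theorem~\ref{theorem:regularization} and Corollary~\ref{corollary:lipschitz-regularization} directly, after a harmless rescaling of the radius. Let $\|\cdot\|$ be the norm supplied by the hypotheses of Theorem~\ref{theorem:regularization}, and introduce the auxiliary transportation cost $\tilde c(z,\hat z) = \|z-\hat z\|^p$. Since $W_p = \sqrt[p]{d_{\tilde c}}$, the inequality $W_p(\Q,\hat{\P})\le\eps$ is equivalent to $d_{\tilde c}(\Q,\hat{\P})\le\eps^p$, so the $p$-th Wasserstein ball coincides exactly with the optimal transport-based ambiguity set of radius~$\eps^p$ induced by~$\tilde c$, that is, $\W_\eps(\hat{\P}) = \{\Q\in\cP(\cZ):d_{\tilde c}(\Q,\hat{\P})\le\eps^p\}$.

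Next I would verify that $\tilde c$ satisfies all requirements of Theorem~\ref{theorem:regularization}. It is continuous, hence lower semicontinuous, and satisfies the identity of indiscernibles. It meets Assumption~\ref{assumption:cost}\,\ref{assumption:cost:lb} with $d(z,\hat z)=\|z-\hat z\|$ (in fact with equality $\tilde c = d^p$), since the sublevel sets of $\|\cdot\|$ intersected with the closed set $\cZ$ are closed and bounded, hence compact. It also meets Assumption~\ref{assumption:cost}\,\ref{assumption:cost:integrable}: the hypotheses of Theorem~\ref{theorem:regularization} already guarantee $\E_{Z\sim\hat{\P}}[c(Z,\hat z_0)]\le C<\infty$ for the original cost~$c$ together with $c(z,\hat z)\ge\|z-\hat z\|^p$ for all $z,\hat z\in\cZ$, whence $\E_{Z\sim\hat{\P}}[\tilde c(Z,\hat z_0)]=\E_{Z\sim\hat{\P}}[\|Z-\hat z_0\|^p]\le C<\infty$. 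Finally, $\ell$ satisfies Assumption~\ref{assumption:derivatives}\,\ref{assumption:differentiability:p:essential} with respect to this same norm $\|\cdot\|$ by assumption. Confirming that the integrability condition transfers to $\tilde c$ is the only point requiring a moment's care, and it is immediate from the lower bound $c\ge\|\cdot\|^p$ that is already built into the hypotheses.

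With these verifications in place, I would apply Theorem~\ref{theorem:regularization} to the cost $\tilde c$ and radius $\eps^p$. In the resulting estimate the $k$-th variation term carries the factor $(\eps^p)^{1/p_k}=(\eps^p)^{k/p}=\eps^k$ because $p_k=p/k$, and the Lagrange-remainder term carries $\eps^p/p!$ in place of $\eps/p!$, while the exponents $q_k=p/(p-k)$ are unchanged; this is precisely~\eqref{eq:regularization_wasserstein:1}, and its finiteness is inherited verbatim from Theorem~\ref{theorem:regularization}. For~\eqref{eq:regularization_wasserstein:2} I would either apply Corollary~\ref{corollary:lipschitz-regularization} in the same way (cost $\tilde c$, radius $\eps^p$), or equivalently start from~\eqref{eq:regularization_wasserstein:1}, use Assumption~\ref{assumption:derivatives}\,\ref{assumption:derivatives:lipschitz:uniform} to bound $\E_{\hat Z\sim\hat{\P}}[\|D^k\ell(\hat Z)\|^{q_k}]^{1/q_k}\le\sup_{\hat z\in\cZ}\|D^k\ell(\hat z)\|=\lip(D^{k-1}\ell)$ for $k\in[p-1]$, and recognize the remainder term $\tfrac{\eps^p}{p!}\lip(D^{p-1}\ell)$ as the $k=p$ summand of $\sum_{k=1}^{p}\tfrac{\eps^k}{k!}\lip(D^{k-1}\ell)$. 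There is no substantive obstacle here: once the Wasserstein ball is identified with an optimal transport-based ambiguity set, the corollary is a book-keeping exercise under the substitution $\eps\mapsto\eps^p$.
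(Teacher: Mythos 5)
Your proposal is correct and follows essentially the same route the paper intends: the paper omits the proof of Corollary~\ref{corollary:Wasserstein-p:regularization}, stating only that it follows from Theorem~\ref{theorem:regularization} and Corollary~\ref{corollary:lipschitz-regularization}, and your argument fills in exactly that substitution ($\tilde c=\|\cdot\|^p$, radius $\eps^p$, so $(\eps^p)^{1/p_k}=\eps^k$ and the remainder carries $\eps^p/p!$), including the correct verification that Assumption~\ref{assumption:cost} transfers to $\tilde c$ via the lower bound $c\ge\|\cdot\|^p$ already present in the theorem's hypotheses.
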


The proof of Corollary~\ref{corollary:Wasserstein-p:regularization} follows from those of Theorem~\ref{theorem:regularization} and Corollary~\ref{corollary:lipschitz-regularization} and is thus omitted. We emphasize that Corollary~\ref{corollary:Wasserstein-p:regularization} holds even if~$\cZ$ fails to be convex. If $p > 1$ and $\cZ$ is convex, then one can show that the upper bound in~\eqref{eq:regularization_wasserstein:1} exactly matches the worst-case expected loss to first order in~$\eps$. To see this, observe first that the norm $\| D^{1} \ell(\hat z)\|$ of the tensor of first-order partial derivatives of~$\ell$ coincides with the dual norm $\|\nabla \ell(\hat z)\|_*$ of the gradient of~$\ell$. This is an direct consequence of the definition of the tensor norm. 
From \citep[Lemma~1]{gao2024wasserstein} (see also \citep[Theorem~8.7]{kuhn2024distributionally}) we further know that
\begin{align*}
    \sup_{\Q \in \W_{\eps}(\hat{\P})} \E_{Z \sim \Q} \left[ \ell(Z) \right] 
    = \E_{\hat Z \sim \hat{\P}} [\ell(\hat Z)] + \eps \, \E_{\hat Z \sim \hat{\P}}\left[ \| \nabla \ell(\hat Z)\|_*^q \right]^{\frac{1}{q}} + \cO(\eps^2),
\end{align*}
where $q=p/(p-1)$. Thus, the upper bound in~\eqref{eq:regularization_wasserstein:1} is indeed exact to first order in~$\eps$. The following example shows, however, that this bound is generally {\em not} exact to higher orders in~$\eps$. 

\begin{example}[Inexactness of~\eqref{eq:regularization_wasserstein:1}]
    \label{ex:inexactness}
    Assume that $\cZ=\R$ and $\ell(z)=-z^2/2$, and consider the Wasserstein ambiguity set $\W_{\eps} (\hat{\P}) = \{ \Q \in \cP(\cZ): W_p ( \Q , \hat{\P} ) \leq \eps \}$ with~$p=3$ and $\hat\P=\delta_0$. As the nominal distribution concentrates all probability mass at the maximum of the loss function, it is clear that $\sup_{\Q \in \W_{\eps}(\hat{\P})} \E_{Z \sim \Q}[ \ell(Z)]=0$. To construct the upper bound in~\eqref{eq:regularization_wasserstein:1}, note that $D^1\ell(z)=-z$ and $D^2\ell(z)=-1$ such that $\|D_1\ell(z)\|=|z|$, $\|D^2\ell(z)\|=1$ and $\lip(D^2\ell)=0$. As $\hat\P=\delta_0$, one thus readily verifies that the upper bound in~\eqref{eq:regularization_wasserstein:1} is given by $\eps^2/2$. This upper bound is correct to first order in~$\varepsilon$ but overestimates the second-order term in the Taylor expansion of the worst-case expected loss.
\end{example}

To conclude, we show that the results of this section generalize several bounds in the literature.

\begin{remark}[Hessian regularization]
    Corollary~\ref{corollary:Wasserstein-p:regularization} generalizes~\citep[Remark~10]{bartl2020robust}, which establishes an upper bound on the worst-case expected loss across all distributions in a Wasserstein ball with~$p>2$ and~$d(z,\hat z)=\|z-\hat z\|_2$. This bound penalizes the gradient and the Hessian of the loss function and holds only {\em asymptotically} for small~$\eps$. One readily verifies that this bound can be obtained from~\eqref{eq:regularization_wasserstein:1} by ignoring all terms of the order~$\cO(\eps^k)$ for $k>2$ and by noting that $\|D^{2} \ell(\hat z)\|_2 = \lambda_{\text{max}}\left(D^{2} \ell(\hat z)\right)$.
\end{remark}

\begin{remark}[Lipschitz regularization]
    For $p=1$ the upper bound in~\eqref{eq:regularization_wasserstein:2} collapses to the sum of the expected loss under the reference distribution and the Lipschitz modulus of~$\ell(z)$ weighted by the Wasserstein radius~$\eps$. This bound is exact for every~$\eps\geq 0$ provided that~$\cZ =\R^d$ and the loss function is convex~\citep[\textsection~6.2]{esfahani2018data}. This result gives classical regularization schemes in statistics and machine learning a robustness interpretation
    \cite{blanchet2019robust,gao2024wasserstein,shafieezadeh2019regularization,shafieezadeh2015distributionally}. More generally, this bound is exact when the loss function~$\ell(z)$ grows asymptotically at rate~$\lip(\ell)$ along some recession direction of~$\cZ$ \citep[Corollary~2]{gao2024wasserstein}. Note, however, that the upper bound in~\eqref{eq:regularization_wasserstein:2} fails to be tight for~$p>1$ even if~$\ell(z)$ is convex.
\end{remark}

To conclude, we re-introduce the decision variable~$\theta$ and highlight the usefulness of the bounds developed in this section for solving the DRO problem~\eqref{eq:dro}. If the loss function~$\ell(\theta,z)$ is {\em nonconvex} in~$\theta$ and/or~$z$, then it is generically hard to evaluate and to minimize the worst-case expectation {\em exactly} even if the nominal distribution is discrete. If the Lipschitz modulus of~$D^{p-1}\ell$ can be evaluated in closed form, however, then the upper bound of Theorem~\ref{theorem:minimax} can be computed efficiently because it consists of several sample averages. In addition, this upper bound can be minimized {\em approximately} by using stochastic gradient descent-type algorithms. By Theorem~\ref{theorem:minimax}, the resulting approximate minimizers enjoy the same robustness guarantees as the exact minimizers of the nonconvex DRO problem.

\subsection{Dual Regularizing Effects of Robustification}
\label{section:regularization:linear}
A distributionally robust learning model over linear hypotheses is a DRO problem of the form
\begin{align}
\label{eq:dro:linear}
\inf_{\theta \in \Theta}\; \sup_{\Q \in \B_{\eps}(\hat{\P})} \E_{Z \sim \Q} \left[ L(\inner{\theta}{Z}) \right],
\end{align}
where $L: \R \to (-\infty, +\infty]$ is a univariate loss function. Thus, \eqref{eq:dro:linear} is a special case of~\eqref{eq:dro} with $\ell(\theta, z)=L(\inner{\theta}{z})$. Throughout this section we will focus on problem~\eqref{eq:dro:linear} instead of the more general problem~\eqref{eq:dro}. Several important problems in operations research and machine learning can be framed as instances of~\eqref{eq:dro:linear}. Examples include portfolio optimization and newsvendor problems, plain vanilla or kernelized regression and classification problems, (linear) inverse problems, and phase retrieval problems. For problem~\eqref{eq:dro:linear} to be well-defined, we assume throughout this section that Assumption~\ref{assumption:continuity} holds. Proposition~\ref{proposition:strong:duality} thus implies that the DRO problem~\eqref{eq:dro:linear} is equivalent to the stochastic program
\begin{align}
    \label{eq:stochastic-program}
    \inf_{\theta\in\Theta,\,\lambda \geq 0}\; \lambda \eps + \E_{\hat Z \sim \hat{\P}} \left[ \ell_{c}(\theta, \lambda, \hat Z) \right],
\end{align}
whose objective function involves the expectation of the $c$-transform $\ell_c(\theta,\lambda, \hat Z)$ under the reference distribution~$\hat{\P}$. Throughout this section we also assume that $\cZ=\R^d$, and thus the $c$-transform satisfies
\begin{align}
    \label{eq:robust:loss}
    \ell_c(\theta,\lambda, \hat z) = \sup_{z \in \dom(c(\cdot,\hat z\,))} L(\inner{\theta}{z}) - \lambda c(z, \hat z).
\end{align}
While the conservative bound~\eqref{eq:regularization:1} derived in Section~\ref{section:highorder:variation} exposes the {\em primal} regularizing effects, the reformulation~\eqref{eq:stochastic-program} exposes the {\em dual} regularizing effects of robustification. Indeed, from a primal perspective, robustification essentially amounts to adding various regularization terms to the expected loss, and from a dual perspective, robustification essentially amounts to replacing the loss with its $c$-transform, which is best viewed as a regularized version of the loss function, and tuning the corresponding regularization parameter~$\lambda$. The main goal of this section is to shed more light on this dual perspective on robustification. Specifically, we will show that the $c$-transform $\ell_c$ can often be expressed in terms of a suitable envelope of~$L$ such as the Pasch-Hausdorff envelope or the Moreau envelope. In addition, we will show how the formulas~\eqref{eq:stochastic-program} and~\eqref{eq:robust:loss} can be exploited algorithmically. Indeed, in Section~\ref{section:nash:computation} we showed that problem~\eqref{eq:dro:linear} is equivalent to a finite convex program if all convexity conditions of Assumption~\ref{assumption:nash:convexity} are satisfied. In this section we will show that, as its objective function depends only on a one-dimensional projection of~$z$, problem~\eqref{eq:dro:linear} can sometimes be solved efficiently even if~$L$ fails to be (piecewise) concave or even if~$c$ fails to be convex.

We now leverage techniques from nonconvex optimization to recast the (possibly nonconvex) problem~\eqref{eq:robust:loss} for evaluating the $c$-transform as a {\em univariate} optimization problem.

\begin{theorem}[Nonconvex duality]
    \label{theorem:nonconvex:duality}
    The $c$-transform~\eqref{eq:robust:loss} admits the following dual reformulations.
    \begin{enumerate}[label=(\roman*)]
        \item \label{theorem:toland} If $L$ is proper, convex and lower semicontinuous, while $c$ is proper and lower semicontinuous, then\footnote{Recall from Proposition~\ref{proposition:primal} that $c^{*1}(\cdot,\hat z)$ denotes the conjugate of $c(\cdot,\hat z)$.} $\ell_c(\theta,\lambda, \hat z)=\sup_{\gamma \in \dom(L^*)} \; \lambda c^{*1}(\gamma \theta / \lambda, \hat z) - L^*(\gamma)$, which is convex in $(\theta, \lambda)$ for any~$\hat z$.
        \item \label{theorem:norm} If $c(z, \hat z) = \| z - \hat z \|^p$ for a norm~$\|\cdot\|$ and $p\geq 1$, then $\ell_c(\theta,\lambda, \hat z)=\sup_{\gamma \in \R} \; L\big( \inner{\theta}{\hat z} + \gamma \| \theta \|_* \big) - \lambda \hspace{0.1ex} |\gamma|^p.$
    \end{enumerate}
\end{theorem}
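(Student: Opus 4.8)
The plan is to treat the two parts separately; both are essentially exercises in convex duality, the only genuine subtlety being the boundary behaviour of the perspective function. For part~\ref{theorem:toland} I would expand $L$ through its biconjugate and then swap two suprema. Since $L$ is proper, convex and lower semicontinuous, Fenchel--Moreau gives $L(t)=\sup_{\gamma\in\dom(L^*)}\gamma t-L^*(\gamma)$ with $\dom(L^*)\neq\emptyset$. Substituting $t=\inner{\theta}{z}$ in~\eqref{eq:robust:loss} and interchanging the two suprema — which is unconditionally valid, being a supremum over a product set — yields
\[
    \ell_c(\theta,\lambda,\hat z)=\sup_{\gamma\in\dom(L^*)}\Big(-L^*(\gamma)+\sup_{z\in\dom(c(\cdot,\hat z))}\inner{\gamma\theta}{z}-\lambda c(z,\hat z)\Big),
\]
so it remains to identify the inner supremum with $\lambda c^{*1}(\gamma\theta/\lambda,\hat z)$. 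For $\lambda>0$ this is immediate: the points $z\notin\dom(c(\cdot,\hat z))$ contribute $-\infty$ and may be discarded, leaving $\lambda\sup_z\inner{\gamma\theta/\lambda}{z}-c(z,\hat z)=\lambda c^{*1}(\gamma\theta/\lambda,\hat z)$ by the definition of the conjugate. For $\lambda=0$ the inner supremum is the support function $\delta^*_{\dom(c(\cdot,\hat z))}(\gamma\theta)$, which I would match to the value $\delta^*_{\dom((c^{*1}(\cdot,\hat z))^{*})}(\gamma\theta)$ that our convention assigns to the perspective at $\lambda=0$, using that $\dom(c(\cdot,\hat z))\subseteq\dom((c(\cdot,\hat z))^{**})\subseteq\overline{\mathrm{conv}}\,\dom(c(\cdot,\hat z))$ and that support functions are invariant under passing to the closed convex hull. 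Convexity of $\ell_c(\cdot,\cdot,\hat z)$ then follows from three standard facts: the perspective of the convex function $c^{*1}(\cdot,\hat z)$ is jointly convex, composition with the linear map $(\theta,\lambda)\mapsto(\gamma\theta,\lambda)$ and subtraction of the constant $L^*(\gamma)$ preserve convexity, and a pointwise supremum of convex functions is convex.

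For part~\ref{theorem:norm} I would first note that $c(z,\hat z)=\|z-\hat z\|^p$ has full domain, so~\eqref{eq:robust:loss} reads $\ell_c(\theta,\lambda,\hat z)=\sup_{z\in\R^d}L(\inner{\theta}{z})-\lambda\|z-\hat z\|^p$, and substitute $w=z-\hat z$, after which the objective depends on $w$ only through the scalar $t=\inner{\theta}{w}$ and the norm $\|w\|$. The plan is to optimise in two stages — first over $\|w\|$ for fixed $t$, then over $t$ — relying on the elementary identity $\min\{\|w\|:\inner{\theta}{w}=t\}=|t|/\|\theta\|_*$ valid for $\theta\neq0$, whose ``$\geq$'' direction is the definition of the dual norm and whose ``$\leq$'' direction exhibits a maximiser of $\inner{\theta}{\cdot}$ over the (compact) unit ball. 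Since $\lambda\geq0$ and $s\mapsto s^p$ is nondecreasing on $[0,\infty)$, the optimal $w$ for each $t$ has $\|w\|=|t|/\|\theta\|_*$, which gives $\ell_c(\theta,\lambda,\hat z)=\sup_{t\in\R}L(\inner{\theta}{\hat z}+t)-\lambda|t|^p/\|\theta\|_*^p$, and the change of variables $\gamma=t/\|\theta\|_*$ produces the claimed formula. The degenerate case $\theta=0$ — where both sides collapse to $L(0)$, or to $\sup_{t\in\R}L(t)$ when $\lambda=0$ — I would dispatch by direct inspection.

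I expect the main obstacle to be the bookkeeping at $\lambda=0$ in part~\ref{theorem:toland}: one must check that the ad hoc value our convention assigns to the perspective there is precisely the support function delivered by the inner supremum, which requires the (mild) observation that $c(\cdot,\hat z)$ and its biconjugate have domains sharing the same support function, and which therefore holds even though $c$ is not assumed convex in~$z$. Everything else is an interchange of suprema in part~\ref{theorem:toland} and a one-dimensional reduction via the dual norm in part~\ref{theorem:norm}, plus routine verification of the corner cases where the functions involved take the value $+\infty$ on the pertinent sets.
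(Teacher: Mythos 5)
Your proposal is correct, and the two parts sit differently relative to the paper. Part~\ref{theorem:toland} is essentially the paper's own argument: biconjugate expansion of $L$, unconditional interchange of the two suprema, identification of the inner supremum with the perspective $\lambda c^{*1}(\cdot/\lambda,\hat z)$, and a separate treatment of $\lambda=0$. The only cosmetic difference is how the $\lambda=0$ boundary is matched to the convention: the paper computes the recession function $\lim_{\alpha\to\infty}\Phi^*(\alpha\gamma\theta)/\alpha$ and invokes \cite[Corollary~8.5.2, Theorem~13.3]{rockafellar1970convex}, whereas you use the sandwich $\dom(c(\cdot,\hat z))\subseteq\dom(c(\cdot,\hat z)^{**})\subseteq\overline{\mathrm{conv}}\,\dom(c(\cdot,\hat z))$ and the invariance of support functions under closed convex hulls; both are valid, and yours is arguably more direct. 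Part~\ref{theorem:norm} is where you genuinely diverge: the paper introduces the constraint $\inner{\theta}{\Delta}=\gamma$, dualizes it with a Lagrange multiplier~$\beta$, computes $\Psi^*$ for $\Psi=\|\cdot\|^p$ explicitly, and then splits into the cases $p=1$ and $p>1$ (each with sub-cases for $\lambda=0$ and $\theta=0$). Your route---minimize $\|w\|$ over the affine set $\{\inner{\theta}{w}=t\}$ via the dual-norm identity $\min\{\|w\|:\inner{\theta}{w}=t\}=|t|/\|\theta\|_*$ and then use monotonicity of $s\mapsto s^p$---treats all $p\geq1$ uniformly, avoids computing $\Psi^*$ altogether, and requires no Lagrangian duality; it is shorter and, in my view, cleaner, and it does not need $L$ to be convex (consistent with the paper, which also drops convexity of $L$ in this part). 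One small slip: in the degenerate case $\theta=0$ with $\lambda=0$, both sides equal $L(0)$, not $\sup_{t\in\R}L(t)$, since $L(\inner{\theta}{z})\equiv L(0)$; this does not affect the validity of the identity, only the value you report for it.
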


The reformulations of the nonconvex problem~\eqref{eq:robust:loss} derived in Theorem~\ref{theorem:nonconvex:duality} are generically also nonconvex. As these reformulations involve conjugate functions and dual norms, respectively, we refer to them as {\em dual} problems even though they constitute maximization problems like the primal problem~\eqref{eq:robust:loss}. These dual problems may be preferable to the primal problem~\eqref{eq:robust:loss} as they involve only a scalar decision. We remark that Theorem~\ref{theorem:nonconvex:duality}\,\ref{theorem:norm} generalizes~\citep[Theorem~1]{blanchet2018optimal}, where $\|\cdot\|$ is a Mahalanobis norm and~$p=2$. Indeed, Theorem~\ref{theorem:nonconvex:duality}\,\ref{theorem:norm} holds for arbitrary norms~$\|\cdot\|$ and exponents~$p$.

\begin{proof}[Proof of Theorem~\ref{theorem:nonconvex:duality}]
        If~$\lambda > 0$, then assertion~\ref{theorem:toland} follows from a classical duality result for nonconvex optimization problems due to Toland~\citep[\textsection~3.1]{toland1978duality}. In order to keep this paper self-contained, we nevertheless provide a short proof. Note first that~$L^{**} = L$ thanks to~\citep[Theorem~12.2]{rockafellar1970convex}, which applies because~$L$ is proper, convex and lower semicontinuous. Thus, the $c$-transform can be reformulated as
    \begin{align*}
        \ell_c(\theta,\lambda, \hat z)&= \sup_{z \in \dom(c(\cdot,\hat z))} \; L^{**}(\inner{\theta}{z})- \lambda c(z, \hat z) \\
        &= \sup_{z \in \dom(c(\cdot,\hat z))} \; \sup_{\gamma \in \dom(L^*)} \; \gamma \inner{\theta}{z} - L^*(\gamma) - \lambda c(z, \hat z) \\
        &= \sup_{\gamma \in \dom(L^*)} \; \sup_{z \in \dom(c(\cdot,\hat z))} \inner{\gamma \theta}{z} - \lambda c(z, \hat z) - L^*(\gamma)  \\
        &= \sup_{\gamma \in \dom(L^*)} \; \lambda c^{*1}(\gamma \theta / \lambda, \hat z) - L^*(\gamma).
    \end{align*}
    where the second equality holds because the biconjugate~$L^{**}$ is defined as the conjugate of~$L^*$ and the last equality follows from the elementary insight that the partial conjugate of~$\lambda c(\cdot, \hat z)$ coincides with~$\lambda c^{*1}(\cdot/\lambda, \hat z)$. This establishes the claim for~$\lambda > 0$. Note that the restriction to $\dom(L^*)$ in the last maximization problem is necessary because the objective is undefined if $\lambda c^{*1}(\gamma \theta / \lambda, \hat z)=L^*(\gamma)=\infty$.
 
    Assume now that~$\lambda = 0$. For clarity of exposition we define $\Phi(z) = c(z, \hat z)$ for $\hat z$ fixed. 
    As~$L^{**} = L$ and as $0c(z, \hat z)=0\Phi(z)=0$ throughout $\dom(\Phi)$, the $c$-transform can now be reformulated as
    \begin{align*}
        \ell_c(\theta,0, \hat z) =\sup_{z \in \dom(\Phi)} L^{**}(\inner{\theta}{z}) = \sup_{z \in \dom(\Phi)} \;\sup_{\gamma \in \dom(L^*)}\inner{\gamma \theta}{z}  - L^*(\gamma) = \sup_{\gamma \in \dom(L^*)} \delta^*_{\dom(\Phi)}(\gamma\theta) - L^*(\gamma).
    \end{align*}
    The desired reformulation follows if we can show that $\delta^*_{\dom(\Phi)}(\gamma\theta) = 0 \Phi^*(\gamma \theta / 0) = 0 c^{*1}(\gamma \theta / 0, \hat z)$. To this end, note first that~$0 \in \dom(\Phi^*)$ because~$\Phi$ is non-negative. Thus, we have
    \begin{align*}
        \delta^*_{\dom(\Phi)}(\gamma\theta)& = \sup_{\Delta \in \R^d} \gamma \inner{\theta}{\Delta} - \inf_{\alpha > 0} \Phi(\Delta)/\alpha \;=\; \sup_{\alpha > 0} \sup_{\Delta \in \R^d} \gamma \inner{\theta}{\Delta} - \Phi(\Delta)/\alpha\\
        & = \lim_{\alpha \to +\infty} \alpha^{-1} \sup_{\Delta \in \R^d} \alpha \gamma \inner{\theta}{\Delta} - \Phi(\Delta) \;=\; \lim_{\alpha \to +\infty} \Phi^*(\alpha \gamma \theta)/\alpha \\
        & =\; 0 \Phi^*(\gamma \theta/0) \;=\; 0c^{*1}(\gamma \theta/0, \hat z),
    \end{align*}
    where the first equality exploits the definition of the support function and the non-negativity of~$\Phi$, which implies that~$\inf_{\alpha > 0} \Phi(\Delta)/\alpha=\delta_{\dom(\Phi)}$, whereas the second, third and fourth equalities follow from elementary reformulations and from the definition of the conjugate. The fifth equality holds thanks to~\cite[Corollary~8.5.2 \& Theorem~13.3]{rockafellar1970convex}. As~$0\in\dom(\Phi^*)$, these results ensure that the recession function~$\lim_{\alpha \to +\infty} \Phi^*(\alpha \gamma \theta)/\alpha$ of~$\Phi^*$ coincides with the support function~$\delta^*_{\dom(\Phi^{**})}(\gamma\theta)$ of the domain of~$\Phi^{**}$, which in turn is our definition of~$0\Phi^*(\gamma \theta/0)$. The last equality follows from the definition of~$\Phi$.
    
    It remains to be shown that $\ell_c(\theta,\lambda, \hat z)$ is convex in $(\theta, \lambda)$. This is  evident from our dual reformulation of $\ell_c(\theta,\lambda, \hat z)$, however, because convexity is preserved by maximization. Hence, assertion~\ref{theorem:toland} follows.
    
    As for assertion~\ref{theorem:norm}, define the auxiliary function $\Psi(z) = \| z \|^p$ for $z\in\R^d$, and recall that $L$ is an arbitrary (possibly nonconvex) proper function. In this case, the $c$-transform can be reformulated as
    \begin{align*}
        \ell_c(\theta,\lambda, \hat z)&= \sup_{z \in \R^d}\; L(\inner{\theta}{z}) - \lambda \Psi(z - \hat z)\\
        &= \sup_{\gamma \in \R}\; \sup_{\Delta \in \R^d} \big\{ L(\inner{\theta}{\hat z}  + \gamma) - \lambda \Psi(\Delta): \inner{\theta}{\Delta} = \gamma \big\} \\
        &= \sup_{\gamma \in \R}\; \inf_{\beta \in \R}\; \sup_{\Delta \in \R^d}\;  L(\inner{\theta}{\hat z}  + \gamma) - \lambda \Psi(\Delta) + \beta \inner{\theta}{\Delta} - \beta \gamma.
    \end{align*}
    In order to justify the third equality, consider the two maximization problems over~$\gamma$ in the second and the third lines of the above expression. If~$\theta=0$, then~$\gamma=0$ is the only feasible solution in both problems, and one readily verifies that the corresponding objective function values coincide. If~$\theta\neq 0$, on the other hand, then the objective functions of the two problems coincide for every fixed~$\gamma\in\R^d$ due to strong Lagrangian duality \citep[Proposition~5.3.1]{bertsekas2009convex}, which applies because the primal maximization problem over~$\Delta$ in the second line is convex and feasible and because every feasible solution constitutes a Slater point. Recalling the definitions of conjugates and perspectives then yields
    \begin{align}
        \ell_c(\theta,\lambda, \hat z)
        &= \sup_{\gamma \in \R}\; \inf_{\beta \in \R}\; L(\inner{\theta}{\hat z} + \gamma) + \Big(\displaystyle \sup_{\Delta \in \R^d}\; \beta \inner{\theta}{\Delta} - \lambda \Psi(\Delta) \Big) - \beta \gamma \notag\\
        &= \sup_{\gamma \in \R}\; \inf_{\beta \in \R}\; L(\inner{\theta}{\hat z} + \gamma) + \lambda \Psi^*(\beta \theta / \lambda) - \beta \gamma . \label{eq:aux}
    \end{align}
    To simplify~\eqref{eq:aux}, we assume first that $p=1$ (Case~I) and later that~$p>1$ (Case~II).
    
    \textbf{Case I} ($p=1$){\bf :} As $\Psi(\cdot) = \| \cdot \|$ is a norm, one can show that $\Psi^*(\cdot)=\delta_C(\cdot)$ is the indicator function of the dual norm ball $C = \{ \theta \in \R^d: \| \theta \|_* \leq 1 \}$. If~$\lambda>0$ and~$\theta\neq 0$, we can thus re-express~\eqref{eq:aux} as
    \begin{align*}
    \sup_{\gamma \in \R}\; \inf_{\beta \in \R}\; L(\inner{\theta}{\hat z} + \gamma) + \lambda \delta_C(\beta \theta / \lambda) - \beta \gamma
    &= \sup_{\gamma \in \R}\; \inf_{\beta \in \R}\; \left\{ L(\inner{\theta}{\hat z}  + \gamma) - \beta \gamma: \| \beta \theta \|_* \leq \lambda \right\} \\
    & = \sup_{\gamma \in \R} \; L(\inner{\theta}{\hat z}  + \gamma) - \lambda |\gamma|/\| \theta \|_* \\
    &= \sup_{\gamma \in \R}\; L(\inner{\theta}{\hat z} + \gamma \| \theta \|_* ) - \lambda |\gamma|,
    \end{align*}
    where the second equality holds because the constraint~$\|\beta\theta\|_*\leq \lambda$ is equivalent to~$|\beta|\leq \lambda/\|\theta\|_*$, while the third equality follows form the variable substitution~$\gamma \leftarrow \gamma/\|\theta\|_*$. If $\lambda >0$ and $\theta = 0$, on the other hand, then $\lambda\Psi^*(\beta\theta/\lambda)=\lambda\Psi^*(0)=0$, and thus we can re-express~\eqref{eq:aux} as
    \begin{align*}
    \sup_{\gamma \in \R}\; \inf_{\beta \in \R}\; \beta \gamma - L( \gamma) 
    = -L(0) = \sup_{\gamma \in \R}\; L(\inner{\theta}{z}  + \gamma \| \theta \|_*) - \lambda |\gamma| .
    \end{align*}
    Here, the first equality holds because~$\gamma = 0$ is the only feasible solution of the maximization problem on the left hand side, and the second equality holds because~$\theta=0$, which implies that the maxization problem on the right hand side is also solved by~$\gamma=0$. If $\lambda=0$ and $\theta\neq 0$, then $\lambda\Psi^*(\beta\theta/\lambda)$ simplifies to $0\Psi^*(\beta\theta/0)=\delta^*_{\R^d}(\beta\theta) = \delta_{\{0\}}(\beta\theta)$
    thanks to our conventions for perspective functions, and~\eqref{eq:aux} becomes
    \begin{align*}
        \sup_{\gamma \in \R}\; L(\inner{\theta}{\hat z} + \gamma) 
        = \sup_{\gamma \in \R}\; L(\inner{\theta}{\hat z} + \gamma \| \theta \|_*)
           = \sup_{\gamma \in \R}\; L(\inner{\theta}{\hat z} + \gamma \| \theta \|_*) - \lambda |\gamma|,
    \end{align*}
    where the first equality exploits the substitution~$\gamma \leftarrow \gamma/\|\theta\|_*$, and the second equality holds because $\lambda=0$. If $\lambda=0$ and $\theta= 0$, finally, then a similar reasoning reveals that~\eqref{eq:aux} reduces to~$L(\gamma)= \sup_{\gamma \in \R}\; L(\inner{\theta}{\hat z} + \gamma \| \theta \|_*) - \lambda |\gamma|$. This completes the proof of assertion~\ref{theorem:norm} for~$p=1$.
    
    \textbf{Case II} ($p>1$){\bf :} By~\cite[Lemma~C.9\,(ii)]{zhen2023unified}, the conjugate of~$\Psi(\cdot)=\|\cdot\|^p$ is given by~$\Psi^*(\cdot) = h(q) \| \cdot \|_*^q$, where~$h(p) = (p - 1)^{p-1} / p^p$ and~$q>0$ is the H\"{o}lder conjugate of~$p$. 
    If~$\lambda>0$ and~$\theta\neq 0$, \eqref{eq:aux} reduces to
    \begin{align*}
    \sup_{\gamma \in \R}\; \inf_{\beta \in \R}\; L(\inner{\theta}{\hat z} + \gamma) + h(q) \lambda^{1-q} \| \theta \|_*^q |\beta|^q - \beta \gamma 
        &= \sup_{\gamma \in \R}\; L(\inner{\theta}{\hat z}  + \gamma) - \lambda \left| \gamma/\| \theta \|_* \right|^p  \\
    &= \sup_{\gamma \in \R}\; L(\inner{\theta}{\hat z}  + \gamma \| \theta \|_*) - \lambda |\gamma|^p,
    \end{align*}
    where the two equalities follow from a tedious analytical calculation and form the substitution~$\gamma \leftarrow \gamma/\|\theta\|_*$, respectively. If $\lambda =0$ or $\theta = 0$, then we can proceed exactly as in Case~I to prove that~\eqref{eq:aux} is equivalent to~$\sup_{\gamma \in \R}\; L(\inner{\theta}{\hat z} + \gamma \| \theta \|_*) - \lambda |\gamma|^p$. This completes the proof of assertion~\ref{theorem:norm}.
\end{proof}

Theorem~\ref{theorem:nonconvex:duality} shows that, in the context of linear prediction models, evaluating the $c$-transform~$\ell_c$ is quite generally equivalent to solving a univariate maximization problem. If this maximization problem is easy to solve, then the stochastic program~\eqref{eq:stochastic-program} equivalent to the DRO problem~\eqref{eq:dro:linear} can be addressed with stochastic gradient descent-type algorithms even if the reference distribution fails to be discrete and even if the convexity assumptions of Section~\ref{section:nash:computation} fail to hold. Specifically, if the univariate maximization problem that evaluates the $c$-transform~$\ell_c$ can be solved analytically, then one may use the envelope theorem~\citep[Theorem~2.16]{de2000mathematical} to generate stochastic subgradients for the objective function of~\eqref{eq:stochastic-program}. In this case, the stochastic program~\eqref{eq:stochastic-program} is amenable to standard (projected) stochastic subgradient methods; see, {\em e.g.}, \cite{polyak1987intro}. If the $c$-transform evaluation problem can only be solved approximately via a numerical procedure such as a grid-search method, a bisection algorithm or a sequential convex optimization approach, then the envelope theorem only provides {\em inexact} stochastic subgradients. In this case, and provided that $L$ is convex, one can use the stochastic subgradient descent algorithms with {\em inexact} gradient oracles studied in \citep[\textsection~3]{blanchet2018optimal} and \citep[\textsection~4]{tacskesen2023semi} to solve problem~\eqref{eq:stochastic-program}. As an example, in Appendix~\ref{section:convexity} we identify conditions under which the univariate $c$-transform evaluation problem purported in Theorem~\ref{theorem:nonconvex:duality}\,\ref{theorem:norm} is a convex maximization problem even if~$L$ is convex instead of concave.

The $c$-transform $\ell_c(\theta,\lambda, \hat z)$ defined in~\eqref{eq:robust:loss} can be viewed as an approximation of~$\ell(\theta, \hat z)=L(\inner{\theta}{\hat z})$, where the parameter $\lambda$ controls the approximation error.
If $c(z, \hat z) = \| z - \hat z \|^p$ for some $p\geq 1$, then the $c$-transform is closely related to the $p$-th envelope $L_{p}:\R\times \R_+ \to [-\infty, +\infty]$ of~$L$, which is defined via
\begin{align*}
    L_p(s, \lambda) = \sup_{s' \in \R}\; L(s')-\lambda | s-s' |^p.
\end{align*}
The next proposition uses Theorem~\ref{theorem:nonconvex:duality}\,\ref{theorem:norm} to formalize this relation and specializes the strong duality result of Proposition~\ref{proposition:strong:duality} to DRO problems of the form~\eqref{eq:dro:linear} with norm-based transportation costs.

\begin{proposition}[Strong duality for DRO problems of the form~\eqref{eq:dro:linear}] \label{proposition:envelope}
    Assume that $\cZ=\R^d$, that $c(z, \hat z) = \| z - \hat z \|^p$, where~$p\geq 1$ and $\|\cdot\|$ is an arbitrary norm, and that the univariate loss function~$L$ is proper and upper semicontinuous and satisfies $L(s) \leq C (1 + |s|^p)$ for some~$C\ge 0$ and all~$s\in\R$. If additionally $\ell(\theta,z)=L(\inner{\theta}{z})$ satisfies Assumption~\ref{assumption:continuity}, then we have
    \begin{align}
    \label{eq:envelope:reg}
        \sup_{\Q \in \B_{\eps}(\hat{\P})} \E_{Z \sim \Q} \left[ L(\inner{\theta}{Z}) \right]
        = \inf_{\lambda \geq 0} \lambda \eps \| \theta \|_*^p + \E_{\hat{Z} \sim \hat{\P}} \left[L_p ( \inner{\theta}{\hat{Z}} , \lambda ) \right].
    \end{align}
\end{proposition}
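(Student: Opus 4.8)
The plan is to obtain~\eqref{eq:envelope:reg} by chaining the two duality results already available in the paper, after which only an elementary change of variables remains. First I would invoke Proposition~\ref{proposition:strong:duality}: the cost $c(z,\hat z)=\|z-\hat z\|^p$ is continuous and hence lower semicontinuous, and $\ell(\theta,z)=L(\inner{\theta}{z})$ satisfies Assumption~\ref{assumption:continuity} by hypothesis, so for $\eps>0$ the proposition gives
\[
\sup_{\Q \in \B_{\eps}(\hat{\P})} \E_{Z \sim \Q}\bigl[L(\inner{\theta}{Z})\bigr]=\inf_{\lambda \geq 0}\; \lambda \eps + \E_{\hat Z \sim \hat{\P}}\bigl[\ell_c(\theta,\lambda,\hat Z)\bigr],
\]
where, since $\dom(c(\cdot,\hat z))=\R^d$, the $c$-transform is $\ell_c(\theta,\lambda,\hat z)=\sup_{z\in\R^d} L(\inner{\theta}{z})-\lambda\|z-\hat z\|^p$ exactly as in~\eqref{eq:robust:loss}. (The case $\eps=0$ forces $\Q=\hat{\P}$ via the identity of indiscernibles of $c$ and is checked directly.)

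Next I would apply Theorem~\ref{theorem:nonconvex:duality}\,\ref{theorem:norm} to this norm-based cost, which rewrites the $c$-transform as the univariate problem $\ell_c(\theta,\lambda,\hat z)=\sup_{\gamma\in\R} L\bigl(\inner{\theta}{\hat z}+\gamma\|\theta\|_*\bigr)-\lambda|\gamma|^p$, valid for every $\theta\in\R^d$ and $\lambda\ge0$. It then remains to recognize this as an instance of the $p$-th envelope and to reparametrize. Assuming $\theta\neq0$, the map $\gamma\mapsto s'=\inner{\theta}{\hat z}+\gamma\|\theta\|_*$ is a bijection of $\R$ with $|\gamma|^p=|s'-\inner{\theta}{\hat z}|^p/\|\theta\|_*^p$, so $\ell_c(\theta,\lambda,\hat z)=\sup_{s'\in\R}\bigl\{L(s')-(\lambda/\|\theta\|_*^p)\,|s'-\inner{\theta}{\hat z}|^p\bigr\}=L_p\bigl(\inner{\theta}{\hat z},\lambda/\|\theta\|_*^p\bigr)$. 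Substituting this into the dual and then replacing the multiplier $\lambda\ge0$ by $\mu=\lambda/\|\theta\|_*^p$ (a bijection of $\R_+$, since $\|\theta\|_*>0$) turns $\inf_{\lambda\ge0}\lambda\eps+\E[\ell_c(\theta,\lambda,\hat Z)]$ into $\inf_{\mu\ge0}\mu\eps\|\theta\|_*^p+\E[L_p(\inner{\theta}{\hat Z},\mu)]$, which is precisely the right-hand side of~\eqref{eq:envelope:reg}.

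Finally I would dispose of the degenerate case $\theta=0$, where the substitution above is invalid. Then $\inner{\theta}{Z}\equiv0$, so the left-hand side of~\eqref{eq:envelope:reg} is $L(0)$; likewise $\ell_c(0,\lambda,\hat z)=L(0)$ for all $\lambda$, so the dual collapses to $\inf_{\lambda\ge0}\lambda\eps+L(0)=L(0)$, and the right-hand side of~\eqref{eq:envelope:reg} reduces to $\inf_{\lambda\ge0}L_p(0,\lambda)$. The identity then follows once one checks that this infimum equals $L(0)$: the map $\lambda\mapsto L_p(0,\lambda)$ is nonincreasing with $L_p(0,\lambda)\ge L(0)$, while the growth bound $L(s)\le C(1+|s|^p)$ shows that for $\lambda>C$ the supremum defining $L_p(0,\lambda)$ is effectively restricted to a shrinking neighborhood of the origin, whence upper semicontinuity of $L$ yields $L_p(0,\lambda)\to L(0)$ as $\lambda\to\infty$. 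I do not anticipate any serious obstacle here—the substance is entirely contained in the two cited duality theorems—and the only point demanding care is this $\theta=0$ boundary case, which is exactly where the growth and upper semicontinuity hypotheses on $L$ are used (they are otherwise subsumed by Assumption~\ref{assumption:continuity}).
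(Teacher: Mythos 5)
Your proposal is correct and follows essentially the same route as the paper: Proposition~\ref{proposition:strong:duality} followed by Theorem~\ref{theorem:nonconvex:duality}\,\ref{theorem:norm}, the substitutions $s'=\inner{\theta}{\hat z}+\gamma\|\theta\|_*$ and $\lambda\gets\lambda/\|\theta\|_*^p$ for $\theta\neq 0$, and a separate treatment of $\theta=0$ via $\inf_{\lambda\geq 0}L_p(0,\lambda)=\lim_{\lambda\to\infty}L_p(0,\lambda)=L(0)$. The only cosmetic difference is that you re-derive this last limit inline, whereas the paper delegates it to Lemma~\ref{lemma:conv:envelope}; your sketch of that argument is the same one the lemma uses.
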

\begin{proof}
    If~$\theta=0$, then the left hand side of~\eqref{eq:envelope:reg} equals $L(0)$, and the right hand side of~\eqref{eq:envelope:reg} evaluates to
    \[
        \inf_{\lambda \geq 0} L_p ( 0 , \lambda ) = \lim_{\lambda\to +\infty} L_p(0,\lambda) = L(0) ,
    \]
    where the first equality holds because $L_p (s , \lambda )$ is non-increasing in $\lambda$ for any fixed~$s$, whereas the second equality follows from Lemma~\ref{lemma:conv:envelope}. 
    This establishes the claim for $\theta=0$. In the remainder of the proof we may thus assume that~$\theta\neq 0$. By Proposition~\ref{proposition:strong:duality}, which applies thanks to Assumption~\ref{assumption:continuity}, the worst-case expected loss on the left hand side of~\eqref{eq:envelope:reg} matches the optimal value of the stochastic program in~\eqref{eq:inner:supremum}, which involves the $c$-transform \eqref{eq:robust:loss}. By Theorem~\ref{theorem:nonconvex:duality}\,\ref{theorem:norm}, we further have
    \begin{align*}
        \ell_c(\theta, \lambda, \hat z) 
        &= \sup_{\gamma \in \R}\;  L(\inner{\theta}{\hat z} + \gamma \| \theta \|_*) - \lambda | \gamma |^p = \sup_{\gamma \in \R}\; L(\gamma) - \frac{\lambda}{\| \theta \|_*^p} | \gamma - \inner{\theta}{\hat z} |^p = L_p \left( \inner{\theta}{\hat z} , {\lambda}/{\| \theta \|_*^p} \right),
    \end{align*}
    where the second and the third equalities follow the variable substitution $\gamma \gets \inner{\theta}{\hat z}+\gamma \| \theta \|_*$ and the definition of~$L_p$, respectively. Substituting this expression for the $c$-transform into~\eqref{eq:inner:supremum} finally yields
    \[
        \sup_{\Q \in \B_{\eps}(\hat{\P})} \E_{Z \sim {\Q}} \left[ L(\inner{\theta}{Z}) \right] = \inf_{\lambda \geq 0} \lambda \eps + \E_{\hat Z \sim \hat{\P}} \left[ L_p \left( \inner{\theta}{\hat z} , {\lambda}/{\| \theta \|_*^p} \right) \right] = \inf_{\lambda \geq 0} \lambda \eps \| \theta \|_*^p + \E_{\hat{Z} \sim \hat{\P}} \left[L_p ( \inner{\theta}{\hat{Z}} , \lambda ) \right],
    \]
    where the second equality follows from the variable substitution $\lambda \gets \lambda/\| \theta \|_*^p$. This establishes the claim for $\theta\neq 0$. In summary, we have thus shown that~\eqref{eq:envelope:reg} holds indeed for every $\theta\in\R^d$.
\end{proof}

\subsubsection{Pasch-Hausdorff Envelope}
Throughout this section we assume that all conditions of Proposition~\ref{proposition:envelope} hold and that $p=1$. In this case, $L_1(\cdot,\lambda)$ is usually called the
Pasch-Hausdorff envelope of $L$.
One can show that $L_1(\cdot, \lambda)$ coincides with the smallest $\lambda$-Lipschitz continuous majorant of $L$ or evaluates to $+\infty$ if no such majorant exists \citep[Proposition~12.17]{bauschke2011convex}. Under the conditions of this section, the univariate minimizaton problem on the right hand side of~\eqref{eq:envelope:reg} can sometimes be solved analytically. This allows us to recover---in a unified and simplified manner---several reformulation results from the recent  literature on Wasserstein DRO. 

\begin{example}[Aymptotically steep Lipschitz continuous loss]
\label{ex:steep}
    If the asymptotic linear growth rate of the loss function~$L$, which is defined as $\limsup_{|s|\to \infty} L(s)/|s|$, coincides with the Lipschitz modulus of~$L$, then the Pasch-Hausdorff envelope of~$L$ satisfies $L_1 (s, \lambda)=L(s)$ if $\lambda \geq \lip(L)$ and $L_1(s, \lambda)=+\infty$ otherwise. To see this, assume first that  $\lambda \geq \lip(L)$. In that case, we have 
    \begin{align*}
        L(\inner{\theta}{\hat z})
        &\leq \sup_{\gamma \in \R}\; L( \gamma ) - \lambda | \gamma - \inner{\theta}{\hat z} | \leq \sup_{\gamma \in \R}\; L\big( \inner{\theta}{\hat z} \big) - \big( \lambda - \lip(L) \big) |\gamma - \inner{\theta}{\hat z}|
        = L\big( \inner{\theta}{\hat z} \big) ,
    \end{align*}    
    where the second inequality holds because the loss function~$L$ is Lipschitz continuous with Lipschitz modulus $\lip(L)$. If $\lambda < \lip(L)$, on the other hand, then we have
    \begin{align*}
        L_1(\inner{\theta}{\hat z}, \lambda) &\geq \limsup_{|\gamma| \to \infty}\; L( \gamma ) - \lambda | \gamma - \inner{\theta}{\hat z} |  \\
        &= L(\inner{\theta}{\hat z}) + \limsup_{|\gamma| \to \infty} |\gamma - \inner{\theta}{\hat z}| \left(\frac{L(\gamma) - L(\inner{\theta}{\hat z})}{|\gamma - \inner{\theta}{\hat z}| } - \lambda \right) \\
        &= L(\inner{\theta}{\hat z}) + \limsup_{|\gamma| \to \infty} |\gamma - \inner{\theta}{\hat z}| \left(\lip(L) - \lambda \right) = +\infty,
    \end{align*}
    where the second inequality holds because the asymptotic linear growth rate of~$L$ coincides with~$\lip(L)$. Hence, the objective function in the minimization problem on the right hand side of~\eqref{eq:envelope:reg} evaluates to~$+\infty$ whenever~$\lambda < \lip(L)$. As $\eps\|\theta\|_*\geq 0$, we thus have~$\lambda=\lip(L)$ at optimality, and~\eqref{eq:envelope:reg} simplifies~to
    \begin{align}
        \label{eq:exact-Lipschitz-regularization}
        \sup_{\Q \in \B_{\eps}(\hat{\P})} \E_{Z \sim \Q} \left[ L(\inner{\theta}{Z}) \right]
        = \E_{\hat Z \sim \hat{\P}}[ L (\inner{\theta}{\hat Z})] + \eps \lip(L) \| \theta \|_*.
    \end{align}
    Hence, the worst-case expected loss over a 1-Wasserstein ball of radius~$\eps$ coincides exactly with the expected loss under the reference distribution adjusted by the regularization term $\eps \lip(L) \| \theta \|_*$. This result strengthens the inequality derived in Corollary~\ref{corollary:Wasserstein-p:regularization} for~$p=1$ to an equality. It was first discovered in the context of distributionally robust linear regression~\citep{shafieezadeh2019regularization}, where the random vector $Z = (X, Y) \in \R^d$ consists of a multi-dimensional input $X \in \R^{d-1}$ and a scalar output $Y \in \R$, the decision~$\theta=(\theta_{x},\theta_y)\in\R^d$ consists of the weight vector $\theta_{x}\in\R^{d-1}$ of a linear predictor and the constant~$\theta_y=-1$, while the prediction loss $L(\inner{\theta}{Z})=L(\inner{\theta_{x}}{X}-Y)$ is determined by a Lipschitz continuous {\em convex} function~$L$. Note that convexity is a simple sufficient condition for the asymptotic linear growth rate of~$L$ to coincide with~$\lip(L)$. In the context of distributionally robust linear classification, where the output~$Y$ is restricted to~$+1$ or~$-1$ and the prediction loss is given by~$L(Y\inner{\theta}{X})$, it has further been shown that
    \begin{align*}
        \sup_{\Q \in \B_{\eps}(\hat{\P})} \E_{(X,Y) \sim \Q} \left[ L(Y \inner{\theta}{X}) \right] 
        = \E_{(\hat X, \hat Y) \sim \hat{\P}} \left[ L(\hat Y \inner{\theta}{\hat X}) \right] + \eps \lip(L) \| \theta \|_*
    \end{align*}
    whenever the transportation cost function satisfies $c((x, y), (\hat x, \hat y)) = \| x - \hat x \|$ if $y=\hat y$; $=+\infty$ if~$y\neq\hat y$, where~$\|\cdot\|$ is an arbitrary norm on the input space \citep{shafieezadeh2019regularization, shafieezadeh2015distributionally}. In this case, the output~$Y$ has the same marginal under every distribution $\Q \in \B_{\eps}(\hat{\P})$ as under the reference distribution $\hat{\P}$. The above identity can be derived by repeating the arguments that led to~\eqref{eq:exact-Lipschitz-regularization} with obvious minor modifications. Details are omitted for brevity. The general identity~\eqref{eq:exact-Lipschitz-regularization} for {\em nonconvex} loss functions whose asymptotic linear growth rate coincides with their Lipschitz modulus was first established in~\citep[Corollary~2]{gao2024wasserstein}.
\end{example}

\begin{example}[Zero-one loss]
    \label{example:classification}
    One readily verifies that the Pasch-Hausdorff envelope of the zero-one loss $L(s)= \mathds 1_{\{s \leq 0 \}}$ satisfies $L_1 (s, \lambda) = \max \{0, 1 - \max\{0, \lambda s \}\}$. By Proposition~\ref{proposition:envelope}, we thus have
    \begin{align*}
        \sup_{\Q \in \B_{\eps}(\hat{\P})} \Q \left( \inner{\theta}{Z}\leq 0 \right) 
        & = \sup_{\Q \in \B_{\eps}(\hat{\P})} \E_{Z \sim \Q} \left[ L(\inner{\theta}{Z}) \right] \\
        &= \min_{\lambda \geq 0}\; \lambda \eps \| \theta \|_* + \E_{\hat Z \sim \hat{\P}} \left[ \max \{0, 1 - \max\{0, \inner{\lambda \theta}{\hat Z} \}\} \right] .
    \end{align*}
    If $\Theta$ is a cone, then the scaling factor~$\lambda$ can be eliminated by using the variable substitution $\theta' \gets \lambda \theta$. In this case, the DRO problem reduces a stochastic program under the reference distribution, that is,
    \begin{align*}
        \inf_{\theta\in\Theta}\sup_{\Q \in \B_{\eps}(\hat{\P})} \Q \left( \inner{\theta}{Z}\leq 0 \right) 
        = \inf_{\theta'\in\Theta} \eps \| \theta' \|_* + \E_{\hat Z \sim \hat \P} \left[ \max \{0, 1 - \max\{0, \inner{\theta'}{\hat Z} \}\} \right].
    \end{align*}
    The above identity was first derived in the context of distributionally robust linear classification \citep{ho2023adversarial} using recent results on ambiguous chance constraints and their relation to conditional value-at-risk constraints \citep{xie2019distributionally}. Our derivation based on the Pasch-Hausdorff envelope is new and shorter.
\end{example}

\subsubsection{Moreau Envelope}
Throughout this section we assume that all conditions of Proposition~\ref{proposition:envelope} hold and that $p=2$. In this case, $L_2(s,\lambda)$ is termed the Moreau envelope of~$L$. Under the conditions of this section, the univariate minimizaton problem on the right hand side of~\eqref{eq:envelope:reg} can be solved analytically in interesting special cases, which leads again to simplified derivations of existing reformulation results. 

\begin{example}[Quadratic loss]
    \label{example:square:loss}
    The Moreau envelope of the quadratic loss function $L(s) =  s^2$ satisfies $L_2 (s, \lambda) =\lambda s^2/(\lambda - 1)$ if $\lambda > 1$, $L_2 (s, \lambda) =+\infty\cdot \mathds{1}_{\{s\neq 0\}}$ if $\lambda=1$, and $L_2 (s, \lambda) =+\infty$ if $0 \leq \lambda \leq 1$. If we assume that $\hat{\P}(\inner{\theta}{\hat Z}\neq 0)>0$ to rule out trivialities, then Proposition~\ref{proposition:envelope} implies that
    \begin{align*}
        \sup_{\Q \in \B_{\eps}(\hat{\P})} \E_{Z \sim \Q} \left[ L(\inner{\theta}{Z}) \right] = \min_{\lambda >1 }\; \lambda \eps \| \theta \|_*^2 + \frac{\lambda}{\lambda - 1} \E_{\hat Z \sim \hat{\P}} \left[ (\inner{\theta}{\hat Z})^2 \right] = \left( \sqrt{\E_{\hat Z \sim \hat{\P}} \left[ (\inner{\theta}{\hat Z})^2 \right]} + \sqrt{\eps} \| \theta \|_* \right)^2.
    \end{align*}
    The second equality in the above expression holds because the minimization problem over~$\lambda$ is solved analytically by $\lambda^\star = 1 + \sqrt{\eps} \| \theta \|_* (\E_{\hat Z \sim \hat{\P}} [ (\inner{\theta}{\hat Z})^2 ])^{-1/2}$. This identity was first discovered in the context of distributionally robust least squares regression; see \citep[Proposition~2]{blanchet2019robust}.
\end{example}

\begin{example}[Zero-one loss]
    \label{example:binary:loss:2}
    The Moreau envelope of the zero-one loss $L(s) =  \mathds{1}_{\{s \leq 0\}}$ is given by
    $L_2 (s, \lambda) 
    = \max\{0, 1 - \sqrt{\lambda} s \max\{0, \sqrt{\lambda} s \}\}$. If $\Theta$ is a cone, then, in analogy to Example~\ref{example:classification}, we can use Proposition~\ref{proposition:envelope} and the variable substitution $\theta' \gets \sqrt{\lambda} \theta$ to conclude that
    \begin{align*}
       \inf_{\theta\in\Theta} \sup_{\Q \in \B_{\eps}(\hat{\P})} \Q \left( \inner{\theta}{Z}\leq 0 \right) 
       &= \inf_{\theta'\in\Theta} \eps \| \theta' \|_*^2 + \E_{\hat Z \sim \hat{\P}} \left[ \max \left\{0, 1 - \inner{\theta'}{\hat Z} \max\{0, \inner{\theta'}{\hat Z} \} \right\} \right].
    \end{align*}
\end{example}

\begin{remark}
    \label{rem:theta-is-a-matrix}
     If the loss function in~\eqref{eq:dro:linear} is representable as $L(\theta^\top z)$ for some matrix~$\theta\in\R^{d\times k}$ and some proper, convex and lower semicontinuous function $L: \R^k \to (-\infty, +\infty]$, while $c$ is proper and lower semicontinuous, then one can proceed as in the proof of Theorem~3.11\,(i) to show that 
    $$
        \ell_c(\theta,\lambda, \hat z)=\sup_{\gamma \in \dom(L^*)} \; \lambda c^{*1}(\theta \gamma / \lambda, \hat z) - L^*(\gamma).
    $$
    In this case, $\gamma$ is no longer a scalar but ranges over~$\R^k$. Even though nonconvex, the resulting maximization problem may still be much easier to solve than~\eqref{eq:robust:loss} when $k\ll d$. That is, we have reformulated a high-dimensional nonconvex problem as a low-dimensional nonconvex problem. Loss functions of the type considered here arise in multi-output regression and classification.
\end{remark}

\section{Numerical Experiments}
\label{section:numerical}
All linear and second-order cone programs in our experiments are implemented in Python and solved with Gurobi~10.0.0 on a \mbox{$ 2.4 $ GHz} quad-core machine with \mbox{$ 8 $ GB RAM}. To ensure reproducibility, all source codes are made available at \url{https://github.com/sorooshafiee/regularization_with_OT}.

\subsection{Nash Equilibria}
\label{sec:nash-experiments}
We first illustrate the computation of Nash equilibria between a statistician and nature in the context of a distributionally robust support vector machine problem. We thus assume that $Z=(X,Y) \in \R^d$ consists of a feature vector $X \in \cX\subseteq \R^{d-1}$ and a label $Y \in \cY = \{-1, +1\}$. In addition, $\theta \in \Theta = \R^{d-1}$ is the weight vector of a linear classifier, and~$\ell(\theta, Z) = \max\{ 0, 1 - Y \inner{\theta}{X}\}$ is the hinge loss function, which represents the pointwise maximum of $I=2$ saddle functions. Finally, the reference distribution~$\hat \P$ is set to the empirical (uniform) distribution on~$J$ training samples $\hat z_j=(\hat x_j,\hat y_j)$, $j\in[J]$, and the transportation cost function is defined as $c((x_1,y_1),(x_2,y_2)) = \| x_1 - x_2 \|$ if $y_1=y_2$, where $\|\cdot\|$ is an arbitrary norm on $\R^{d-1}$, and  $c((x_1,y_1),(x_2,y_2)) =+\infty$ if $y_1\neq y_2$. In this setting, the Nash equilibrium between the statistician and nature can be computed by using the techniques that were developed in Section~\ref{section:nash:computation} and further generalized in Appendix~\ref{appendix:nash:ml}. Indeed, one readily verifies that Assumptions~\ref{assumption:nash:reference}, \ref{assumption:nash:convexity:ml} and \ref{assumption:slater:ml} hold. In addition, as we will see below, one can also show that Assumption~\ref{assumption:nash:regularity2} is satisfied. This implies that Nash strategies for the statistician and nature can be computed by solving the finite convex programs~\eqref{eq:primal-revisited} and~\eqref{eq:dual-revisited}, which generalize problems~\eqref{eq:primal} and~\eqref{eq:dual}, respectively; see Theorem~\ref{theorem:nash:computation:ml}. Specifically, under the given assumptions about the loss and transportation cost functions as well as the reference distribution, one can show that if $\cX=\R^{d-1}$, then~\eqref{eq:primal-revisited} simplifies to
\begin{align}
        \label{eq:reg:svm}
        \begin{array}{cll}
        \min & \DS \eps \lambda + \frac{1}{J} \sum_{j \in [J]} s_j\\
        \st & \theta\in\R^{d-1},~ \lambda, s_j\in\R & \forall j\in[J]\\
        & \lambda \geq \| \theta \|_*,~ s_j \geq 0, ~ s_j \geq 1 - \hat{y}_j \inner{\theta}{\hat{x}_j} & \forall j\in[J],
        \end{array}
\end{align}
while problem~\eqref{eq:dual-revisited} reduces to
\begin{align}
    \label{eq:dual:svm}
    \begin{array}{cll}
        \max & \DS \sum_{j \in [J]} q_{2j} \\[0.5ex]
        \st & \DS q_{ij} \in \R_+, ~ \xi_{ij} \in \R^{d-1} & \forall i \in [I], \, j \in [J] \\[0.5ex]
        & \DS \sum_{i \in [I]} q_{ij} = 1/J & \forall j \in [J] \\[0.5ex]
        & \DS \sum_{i \in [I]} \sum_{j \in [J]} \left\| \xi_{ij} \right\| \leq \eps \\[0.5ex]
        & \DS \sum_{j \in [J]} \hat{y}_j (q_{2j} \hat{x}_j + \xi_{2j}) = 0.
    \end{array}
\end{align}
The following proposition shows that distributionally robust support vector machine problem under consideration admits a continuum of least favorable distributions, which represent different Nash strategies of nature. This implies that there is in fact a continuum of many different Nash equilibria.

\begin{proposition}[Non-uniqueness of Nash equilibria]
    \label{prop:non-uniqueness}
    If~$(\{\tilde q^\star_j\}_{j})$ solves the maximization problem
    \begin{align}
    \label{eq:dual:svm:light}
    \max_{\tilde q_j,\,j\in[J]} \left\{ \sum_{j \in [J]} \tilde q_j: \left\| \sum_{j \in [J]} \tilde q_{j} \hat{y}_j \hat{x}_j \right\| \leq \eps, \, 0 \leq \tilde q_j \leq \frac{1}{J} ~ \forall j \in [J] \right\},
    \end{align}
    and if $\cJ_0 = \{j \in [J]: \tilde q^\star_{j} = 0 \}$, $\cJ_+ = \{j \in [J]: \tilde q^\star_{j} > 0 \}$ and
    $\tilde \xi = - \sum_{j \in [J]} \tilde q^\star_{j} \hat{y}_j \hat{x}_j$, then the distribution
    \begin{align*}
        \Q^\star(\alpha)=\frac{1}{J} \sum_{j \in \cJ_0} \delta_{(\hspace{0.1em}\hat{x}_j, \hat{y}_j)} 
        + \sum_{j \in \cJ_+} \left( \frac{1}{J} - \tilde q^\star_j \right) \delta_{(\hspace{0.1em}\hat{x}_j, \hat{y}_j)}
        + \sum_{j \in \cJ_+} \tilde q^\star_j \delta_{(\hspace{0.1em} \hat{x}_j + \alpha_j \hat{y}_j \tilde \xi / \tilde q_j^\star, \hat{y}_j)}
    \end{align*}
    constitutes a Nash strategy of nature for any $\alpha \in A = \{ \alpha \in \R_+^J: \sum_{j \in [J]} \alpha_j = 1, ~ \alpha_j = 0 ~\forall j \in \cJ_0 \}$. 
\end{proposition}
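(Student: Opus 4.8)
The plan is to realize, for every $\alpha \in A$, the distribution $\Q^\star(\alpha)$ as the least favorable distribution that Theorem~\ref{theorem:nash:computation:ml} (the machine-learning refinement of Theorem~\ref{theorem:nash:computation}) associates with an explicitly constructed maximizer of the finite convex program~\eqref{eq:dual:svm}, and then to pair this distribution with any minimizer of~\eqref{eq:reg:svm} to obtain a Nash equilibrium. Throughout I assume, as in the rest of Section~\ref{section:nash:computation}, that $\eps>0$. Recall that here $I=2$ and that the hinge loss is the pointwise maximum of the saddle functions $\ell_1\equiv 0$ and $\ell_2(\theta,(x,y))=1-y\inner{\theta}{x}$, so that $q_{1j}$ (resp.\ $q_{2j}$) is the mass that the worst-case coupling keeps on the inactive (resp.\ active) piece of the hinge loss at sample~$j$, and~\eqref{eq:dual:svm} is exactly the dual convex program~\eqref{eq:dual-revisited} after elimination of the conjugate variables forced by $\ell_1$ and $\ell_2$.

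Given a maximizer $\{\tilde q_j^\star\}_j$ of~\eqref{eq:dual:svm:light}, I would set $q_{1j}^\star = 1/J - \tilde q_j^\star$, $q_{2j}^\star = \tilde q_j^\star$, $\xi_{1j}^\star = 0$ and $\xi_{2j}^\star = \alpha_j \hat y_j \tilde\xi$ for all $j \in [J]$, and check that this is feasible in~\eqref{eq:dual:svm}. Nonnegativity and $q_{1j}^\star + q_{2j}^\star = 1/J$ are immediate; the transportation budget equals $\sum_{i,j}\|\xi_{ij}^\star\| = \sum_j \alpha_j\|\tilde\xi\| = \|\tilde\xi\|$, which is $\le \eps$ because $|\hat y_j|=1$, $\sum_j\alpha_j=1$ and $\|\tilde\xi\| = \|\sum_j \tilde q_j^\star \hat y_j \hat x_j\| \le \eps$ by the constraint of~\eqref{eq:dual:svm:light}; and the last equality constraint holds since $\sum_j \hat y_j(q_{2j}^\star \hat x_j + \xi_{2j}^\star) = \sum_j \tilde q_j^\star \hat y_j \hat x_j + (\sum_j \alpha_j)\tilde\xi = -\tilde\xi + \tilde\xi = 0$, using $\hat y_j^2 = 1$ and $\sum_j \alpha_j = 1$.

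Next I would prove this candidate is optimal for~\eqref{eq:dual:svm} by showing that~\eqref{eq:dual:svm} and~\eqref{eq:dual:svm:light} share the same optimal value. For ``$\le$'', any feasible point of~\eqref{eq:dual:svm} gives $\tilde q_j := q_{2j} \in [0,1/J]$, and the last equality constraint together with the triangle inequality yield $\|\sum_j \tilde q_j \hat y_j \hat x_j\| = \|\sum_j \hat y_j \xi_{2j}\| \le \sum_j\|\xi_{2j}\| \le \sum_{i,j}\|\xi_{ij}\| \le \eps$, so $\{\tilde q_j\}_j$ is feasible in~\eqref{eq:dual:svm:light} with the same objective $\sum_j q_{2j}$; for ``$\ge$'', the candidate above is feasible with objective $\sum_j \tilde q_j^\star$, the optimum of~\eqref{eq:dual:svm:light}. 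Moreover $\cI_j^\infty = \{i : q_{ij}^\star = 0,\ \xi_{ij}^\star \neq 0\} = \emptyset$ for every $j$: indeed $\xi_{1j}^\star = 0$ always, and $q_{2j}^\star = \tilde q_j^\star = 0$ forces $j \in \cJ_0$, hence $\alpha_j = 0$ by the definition of $A$, hence $\xi_{2j}^\star = 0$. This is precisely the hypothesis under which Theorem~\ref{theorem:nash:computation:ml} (equivalently, Assumption~\ref{assumption:nash:regularity2}\,(ii)) guarantees that the distribution $\sum_j \sum_{i \in \cI_j^+} q_{ij}^\star\,\delta_{\hat z_j + \xi_{ij}^\star/q_{ij}^\star}$ solves the dual DRO problem~\eqref{eq:dual:dro}; reading off $\cI_j^+$ (which is $\{1\}$ for $j\in\cJ_0$ and $\{1,2\}$, or $\{2\}$ when $\tilde q_j^\star = 1/J$, for $j\in\cJ_+$) and using the perspective-function conventions for the vanishing masses, this distribution is exactly $\Q^\star(\alpha)$. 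Finally, \eqref{eq:reg:svm} is solvable because on any of its sublevel sets the term $\eps\lambda$ with $\lambda \ge \|\theta\|_*$ keeps $\theta$ bounded while the remaining constraints keep $\lambda$ and the $s_j$ bounded, so a corresponding minimizer $\theta^\star$ solves the primal DRO problem~\eqref{eq:dro} by Theorem~\ref{theorem:nash:computation:ml}; since the framework of Section~\ref{section:nash:computation} also guarantees that the primal and dual optimal values coincide, $(\theta^\star, \Q^\star(\alpha))$ satisfies the saddle-point condition~\eqref{eq:Nash}, so $\Q^\star(\alpha)$ is a Nash strategy of nature for every $\alpha \in A$.

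I expect the main obstacle to be the bookkeeping in the two middle steps: pinning down the explicit maximizer of~\eqref{eq:dual:svm} in terms of the solution of the reduced linear program~\eqref{eq:dual:svm:light}, and then matching, atom by atom and with the correct perspective-function conventions, the output $\sum_j \sum_{i \in \cI_j^+} q_{ij}^\star\,\delta_{\hat z_j + \xi_{ij}^\star/q_{ij}^\star}$ of Theorem~\ref{theorem:nash:computation:ml} to the claimed $\Q^\star(\alpha)$ --- and in particular the verification $\cI_j^\infty = \emptyset$, where the requirement ``$\alpha_j = 0$ for $j \in \cJ_0$'' built into the set $A$ is indispensable. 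The only other place that calls for a genuine (if short) argument is the identity of optimal values of~\eqref{eq:dual:svm} and~\eqref{eq:dual:svm:light}, which rests on applying the triangle inequality to the equality constraint of~\eqref{eq:dual:svm}.
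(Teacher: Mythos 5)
Your proposal is correct and follows the same overall architecture as the paper's proof: build the explicit family $(\{q_{ij}^\star(\alpha),\xi_{ij}^\star(\alpha)\}_{i,j})$ of solutions of~\eqref{eq:dual:svm} from a maximizer of~\eqref{eq:dual:svm:light}, verify $\cI_j^\infty=\emptyset$ (where, as you note, the condition $\alpha_j=0$ for $j\in\cJ_0$ is exactly what is needed), and invoke Theorem~\ref{theorem:nash:computation:ml}\,\ref{theorem:nash:computation:Q:ml} to read off $\Q^\star(\alpha)$. The one step you handle differently is the identity of the optimal values of~\eqref{eq:dual:svm} and~\eqref{eq:dual:svm:light}: the paper obtains it indirectly, by observing via the conic duality theorem that~\eqref{eq:reg:svm} and~\eqref{eq:dual:svm:light} are strong duals and then chaining this with the equality of the optima of~\eqref{eq:reg:svm} and~\eqref{eq:dual:svm} guaranteed by Theorem~\ref{theorem:nash:computation:ml}, whereas you prove it directly by a two-sided inequality --- mapping any feasible point of~\eqref{eq:dual:svm} to a feasible point of~\eqref{eq:dual:svm:light} with the same objective via the triangle inequality applied to the equality constraint, and using your candidate for the reverse direction. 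Your route is more elementary and self-contained (it does not presuppose that~\eqref{eq:dual:svm:light} is the conic dual of~\eqref{eq:reg:svm}), at the cost of a slightly longer feasibility/optimality verification; the paper's route is shorter but leans on two external duality facts. Your closing remarks on the solvability of~\eqref{eq:reg:svm} and the saddle-point condition~\eqref{eq:Nash} go beyond what the statement requires (a Nash strategy of nature is, in the paper's terminology, a solution of the dual DRO problem~\eqref{eq:dual:dro}), but they are harmless.
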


Consider now an instance of the distributionally robust support vector machine problem with~$d=3$, $\eps = 0.1$ and~$J=20$, where $\hat y_j=1$ and $\hat x_j$ is sampled independently from $\cN((1,-1), \frac{1}{2} I_2)$ for $j=1,\ldots,10$, while $\hat y_j=-1$ and $\hat x_j$ is sampled independently from $\cN((-1,1), \frac{1}{2} I_2)$ for $j=11,\ldots,20$. In addition, we use the $p$-norm to quantify the transportation cost in the feature space for $p\in\{1,2,\infty\}$. We then solve problem~\eqref{eq:reg:svm} to find an optimal weight vector~$\theta^\star$, and we solve problem~\eqref{eq:dual:svm:light} to construct the index sets~$\cJ_0$ and $\cJ_+$, the transportation budget~$\tilde\xi$ as well as different least favorable distributions~$\Q^\star(\alpha)$ with~$\alpha \in\cA$.
Figure~\ref{figure:nash} represents all training samples with~$\hat y_j=1$ and~$\hat y_j=-1$ as ``+'' and red ``--'' markers, respectively. The optimal classifier is visualized by the separating hyperplane~$\{x\in\R^2:\inner{\theta^\star}{x}=0\}$ (solid line) and the maximum margin hyperplanes~$\{x\in\R^2:\inner{\theta^\star}{x}=\pm 1\}$ (dashed lines). By Proposition~\ref{prop:non-uniqueness}, there are infinitely many least favorable distributions, which are obtained from the empirical distribution by moving probability mass from the samples in~$\cJ_+$ in a direction of increasing hinge loss at a total transportation budget~$\tilde \xi$. The left charts of Figure~\ref{figure:nash} show the empirical distribution, where all training samples in~$\cJ_+ $ are designated by a solid circle. The middle charts show a least favorable distribution obtained by assigning the transportation budget~$\tilde \xi$ to a single training sample in~$\cJ_+$, and the right charts show the least favorable distribution obtained by evenly distributing the transportation budget~$\tilde \xi$ across all training samples in~$\cJ_+$.

\begin{figure}[t]
    \centering
    \includegraphics[width=0.8\columnwidth]{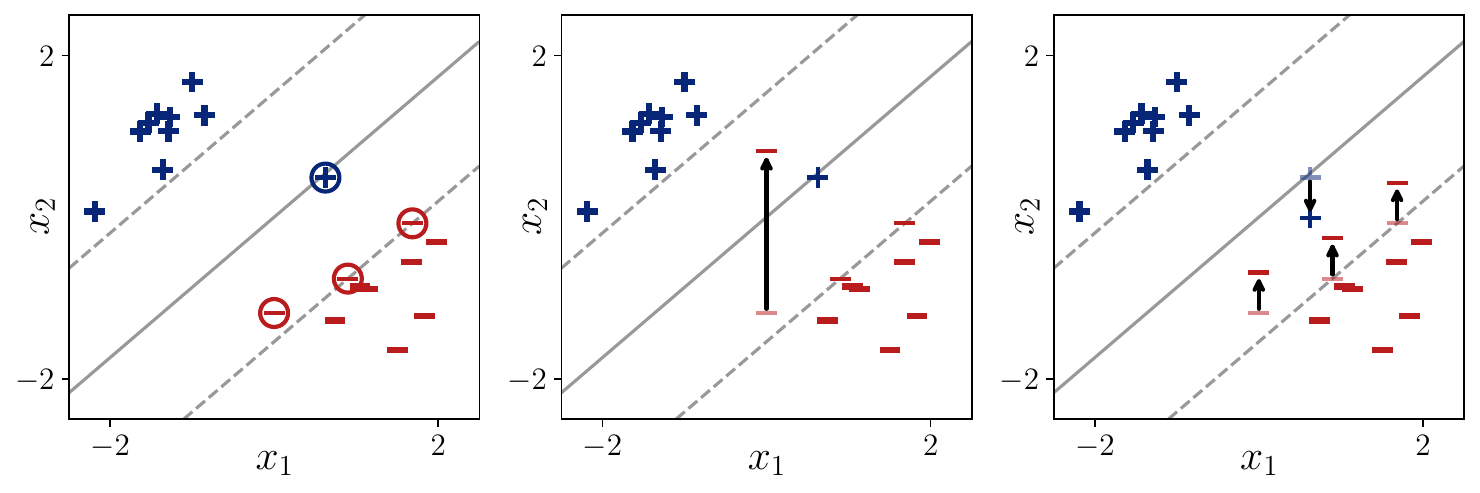} \\
    \includegraphics[width=0.8\columnwidth]{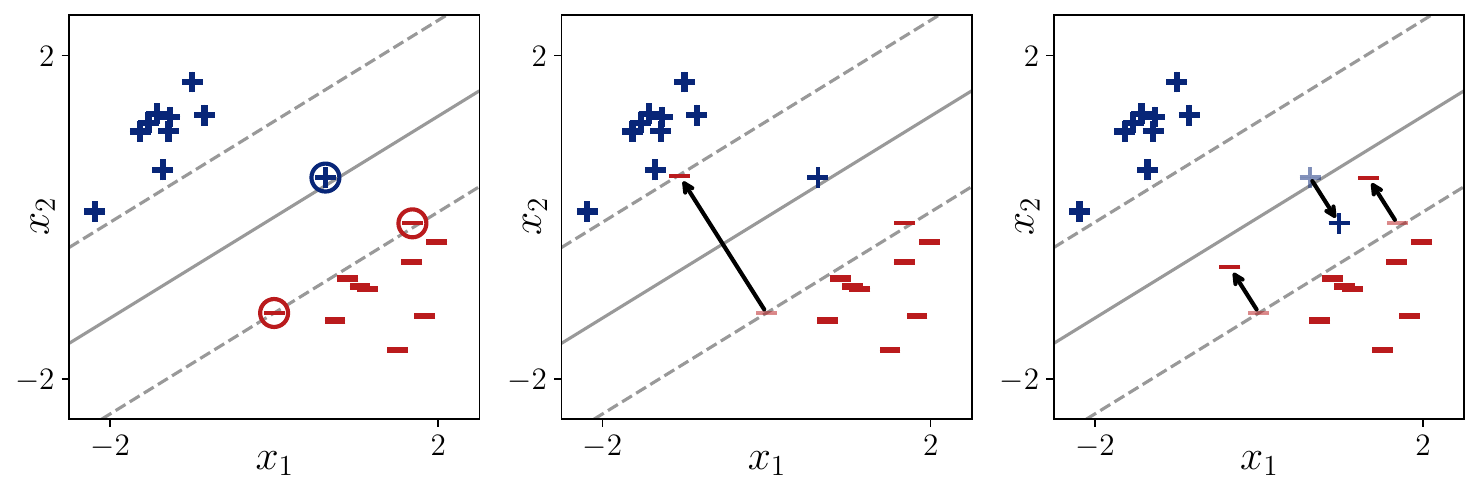} \\
    \includegraphics[width=0.8\columnwidth]{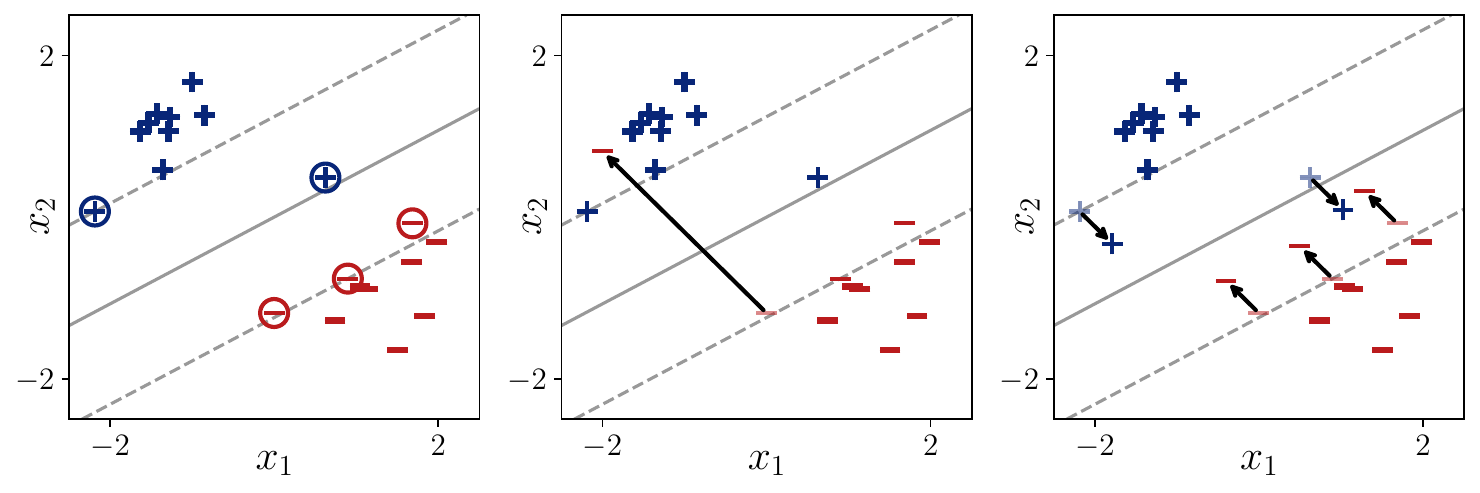}
    \caption{Different feature distributions in a distributionally robust support vector machine problem: Empirical distribution (left), a least favorable distribution obtained by perturbing a single sample in~$\cJ_+$ (center), and the least favorable distribution obtained by perturbing all samples in~$\cJ_+$ (right), using the 1-norm transportation cost (top), the 2-norm transportation cost (middle), and the $\infty$-norm transportation cost (bottom).}
    \label{figure:nash}
\end{figure}

In the second experiment we use a distributionally robust support vector machine to distinguish greyscale images of handwritten numbers~3 and~8 from the MNIST $3$-vs-$8$ dataset \citep{lecun1998gradient}. Any such image consists of 786 pixels with an intensity ranging from~0 (black) to~1 (white) and thus represents a feature vector~$X\in\cX=[0,1]^{d-1}$ with $d=787$. The corresponding label~$Y$ is set to~$+1$ if the image shows the number~3 and to $-1$ if the image shows the number~8. For ease of visualization, we use the first~$J=10$ records of the dataset as the training samples. In addition, we use the $\infty$-norm to quantify the transportation cost in the feature space. The goal of this experiment is to compare the least favorable distributions that solve the dual DRO problem ({\em i.e.}, the Nash strategies of nature) against the worst-case distributions that maximize the expected hinge loss when the classifier's weight vector is fixed to a minimizer~$\theta^\star$ of the primal DRO problem ({\em i.e.}, the best response strategies of nature when the statistician plays~$\theta^\star$). Specifically, we construct a least favorable distribution as in Theorem~\ref{theorem:nash:computation:ml}\,\ref{theorem:nash:computation:Q:ml}, which is possible because~$\cI_j^\infty = \emptyset$ for every $j\in[J]$ thanks to the compactness of~$\cX$. In addition, we compute~$\theta^\star$ as in Theorem~\ref{theorem:nash:computation:ml}\,\ref{theorem:nash:computation:theta:ml} and construct a worst-case distribution for~$\theta^\star$ as in~\citep[\textsection~3.2]{shafieezadeh2019regularization}. As every Nash strategy is a best response to the adversary's Nash strategy, it is clear that every least favorable distribution is a worst-case distribution for~$\theta^\star$. If the finite convex program used to construct the worst-case distribution has multiple optimal solutions, then the reverse implication is generally {\em false}. The optimization algorithm used to solve this convex program thus outputs an arbitrary worst-case distribution that generically fails to be a least favorable distribution.

Figure~\ref{figure:mnist} visualizes specific worst-case and least favorable distributions found by Gurobi for different radii~$\eps$ of the ambiguity set. The ten images corresponding to~$\eps=0$ represent the features of the ten unperturbed training samples. For any~$\eps>0$, both the worst-case distribution as well as the least favorable distribution are obtained by moving probability mass from some of the training samples to corresponding {\em adversarial} samples with the same labels but perturbed features. Whenever this happens, Figure~\ref{figure:mnist} shows the adversarial samples instead of repeating the corresponding training samples, and underneath each adversarial sample we indicate the probability mass---as a percentage of~$1/J$---inherited from the underlying training sample. We emphasize that the adversarial samples differ from all training samples and were thus `invented by nature' with the goal to confuse the statistician. The adversarial samples of the worst-case distribution differ from the corresponding training samples only in a few pixels that look like noise to the human eye. Some adversarial samples of the least favorable distribution, however, are truly deceptive. For example, the feature of the sixth training sample arguably shows the number~8, but for any $\eps>0$ this~8 is ostensibly converted to a~3 that was not present in the training dataset. We conclude that nature’s best response to the optimal classifier with weight vector~$\theta^\star$ is at best suitable to deceive an algorithm, whereas nature’s Nash strategy can even deceive a human. A possible explanation for this observation is that nature's Nash strategy aims to fool {\em every possible} classifier and not only {\em one single optimal} classifier.

\begin{figure}[t]
    \center
    \subfigure[Worst-case distributions]{\includegraphics[width=0.45\columnwidth]{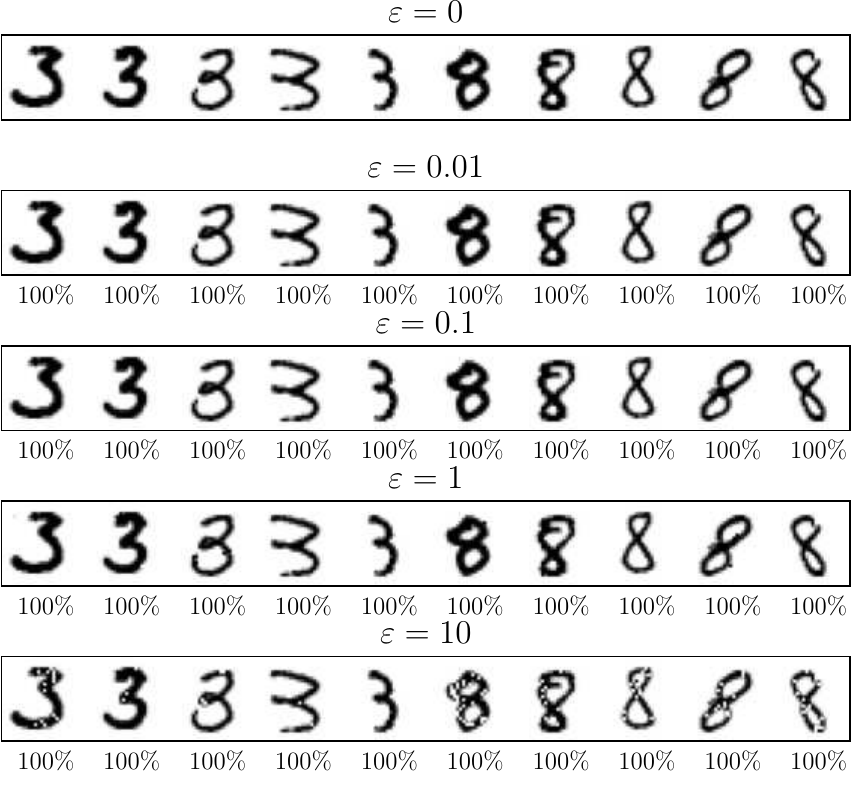}} ~
    \subfigure[Least favorable distributions]{\includegraphics[width=0.45\columnwidth]{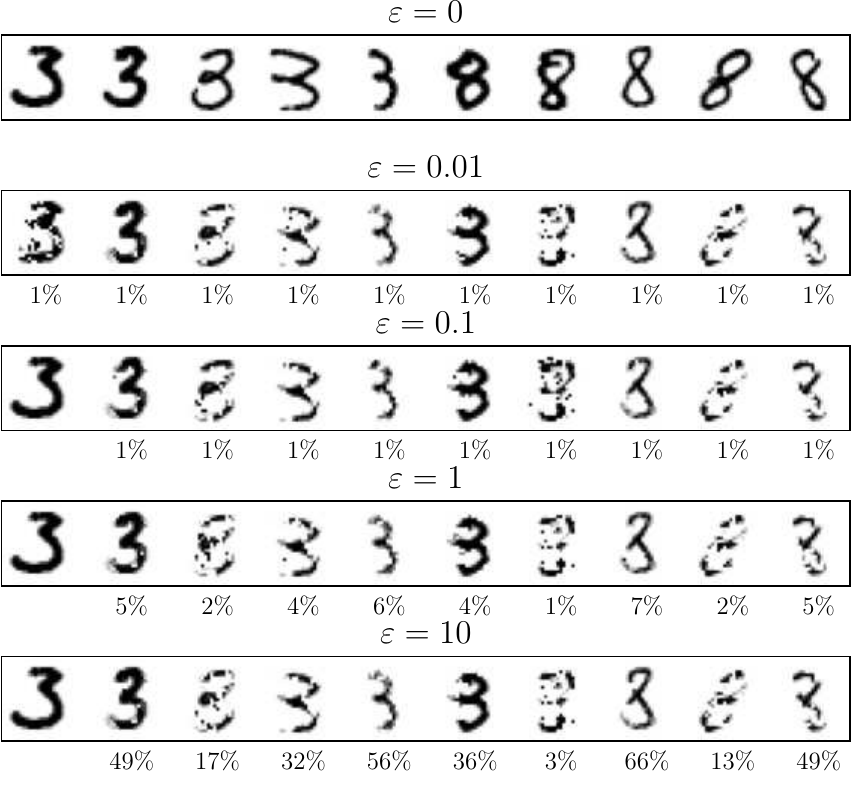}} 
    \caption{Comparison of worst-case and least favorable distributions for different values of~$\eps$. The number underneath each adversarial example indicates its probability mass as a percentage of~$1/J$.}
    \label{figure:mnist}
\end{figure}

\subsection{Distributionally Robust Log-Optimal Portfolio Selection}

Assume now that the components of the random vector~$Z\in\cZ=\R^d$ represent the total returns of~$d$ assets over the next month, say. If the asset returns over consecutive months are serially independent and governed by the same distribution~$\P\in\cP(\cZ)$ satisfying some plausible mild regularity conditions (such as $\P[Z\in\R^d_{++}]=1$), and if~$\Theta$ represents the probability simplex in~$\R^d$, then one can show that the constantly rebalanced portfolio~$\theta\in \Theta$ that maximizes the expected log-utility $\E_{Z \sim \P}[ \log(\inner{\theta}{Z})]$ generates more wealth than any other causal portfolio strategy with probability~1 in the long run \cite[Theorem~15.3.1]{cover2006elements}. Unfortunately, however, the asset return distribution~$\P$ is unknown in practice. It is therefore natural to study a distributionally robust problem formulation. In contrast to \cite{rujeerapaiboon2016robust}, where~$\P$ is assumed to be unknown except for its first- and second-order moments, we model distributional ambiguity here via an optimal transport-based ambiguity set centered at the empirical distribution~$\hat{\P} = (1 / J) \sum_{j \in [J]} \delta_{\, \hat z_j}$ on $J$ training samples $\hat z_j \in \R^d_{++}$, $j \in [J]$. Thus, we aim to solve
\begin{align}
\label{eq:dro:portfolio}
    \min_{\theta \in \Theta} \sup_{\Q \in \B_{\eps}(\hat{\P})} ~ \E_{Z \sim \Q} \left[ -\log(\inner{\theta}{Z}) \right],
\end{align}
which is an instance of~\eqref{eq:dro:linear} with $L(s)=-\log(s)$ if $s>0$ and $L(s)=+\infty$ if $s\leq 0$. If the transportation cost function is set to~$c(z,\hat z)=\|z-\hat z\|^p$ for some norm~$\|\cdot\|$ on~$\R^d$ and exponent~$p\geq 1$, then~$\B_{\eps}(\hat{\P})$ reduces to the $p$-th Wasserstein ball of radius~$\eps^p$ around~$\hat{\P}$. One readily verifies that any such Wasserstein ball contains distributions that assign a strictly positive mass to~0. Thus, the worst-case expected log-utility of any portfolio~$\theta\in\Theta$ is unbounded from above, which implies that problem~\eqref{eq:dro:portfolio} is infeasible.
To ensure that problem~\eqref{eq:dro:portfolio} is well-defined, the cost of moving any fixed probability mass towards~0 must tend to infinity. This can be ensured, for example, by setting $c(z, \hat z) = \sum_{i \in [d]} | \log(z_i / \hat z_i)) |$ with $\dom(c(
\cdot,\hat z))=\R^d_{++}$ for every $\hat z\in \R^d_{++}$. Even though it is nonconvex in both of its arguments, this transportation cost function defines a metric on $\cZ$ that gives rise to a valid optimal transport discrepancy. Elementary calculations show that $L^*(s) = -1 - \log(-s)$ and that $c^{*1}(s, \hat z) = \sum_{i\in[d]} h(s_i \hat z_i)$, where the auxiliary function $h:\R\rightarrow (-\infty, +\infty]$ is defined through $h(s)=-1-\log(-s)$ if $s<-1$, $h(s)=s$ if $-1\leq s\leq 0$ and $h(s)=+\infty$ if $s>0$. Note that the proper convex function $h$ is differentiable on its domain. By Theorem~\ref{theorem:nonconvex:duality}\,\ref{theorem:toland} and as~$\dom(L^*) = (-\infty,0)$, the $c$-transform~\eqref{eq:robust:loss} can thus be reformulated as
\begin{align}
    \label{eq:robust:portfolio}
    \ell_c(\theta, \lambda, \hat z)
    = \sup_{\gamma <0} \; 1 + \log(-\gamma)+
    \sum_{i \in [d]} \lambda \, h(\gamma \theta_i \hat z_i / \lambda).
\end{align}
The next proposition shows that the maximization problem in~\eqref{eq:robust:portfolio} can be solved efficiently by sorting.

\begin{proposition}
\label{proposition:portfolio}
    Given $\theta\in\Theta$, $\lambda \in \R_+$ and $\hat z \in \R^d_{++}$, set $u_i = \theta_i \hat z_i$ for every $i \in [d]$, let $\sigma$ be a permutation of $[d]$ with $u_{\sigma(1)} \leq u_{\sigma(2)} \leq \cdots \leq u_{\sigma(d)}$, and define
    \begin{align*}
        k = \max \left\{ i \in [d]\;:\; (1 - (d-i) \lambda) u_{\sigma(i)} \leq \lambda \sum_{j \in [i]} u_{\sigma(j)} \right\} \quad \text{and} \quad \gamma^\star = \frac{(d-k) \lambda - 1}{\sum_{i \in [k]} u_{\sigma(i)}}.
    \end{align*}
    If $\lambda \geq 1 / \| \theta \|_0$, then problem~\eqref{eq:robust:portfolio} is  solved by $\gamma^\star$. Otherwise, we have $\ell_c(\theta, \lambda, \hat z)=+\infty$.
\end{proposition}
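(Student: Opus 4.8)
The plan is to pass to the variable $t=-\gamma>0$ and study the scalar maximization behind \eqref{eq:robust:portfolio}. Writing $u_i=\theta_i\hat z_i\ge 0$ and $n=\|\theta\|_0$ (the number of strictly positive $u_i$), we must maximize $f(t)=1+\log t+\lambda\sum_{i\in[d]}h(-tu_i/\lambda)$ over $t>0$. Since $h$ is convex and differentiable on its domain with a single breakpoint at $s=-1$, the function $f$ is differentiable with
$f'(t)=\tfrac1t-\sum_{i\in S(t)}u_i-\tfrac{m(t)\lambda}{t}$, where $S(t)=\{i\in[d]:tu_i\le\lambda\}$ and $m(t)=|\{i\in[d]:tu_i>\lambda\}|$, and this expression is continuous in $t$ (the two regimes agree when $tu_i=\lambda$). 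I would dispatch the case $\lambda<1/n$ (which includes $\lambda=0$) first and directly from the primal formula \eqref{eq:robust:loss}: evaluating the objective at the feasible point $z$ with $z_i=r\hat z_i$ for $\theta_i>0$ and $z_i=\hat z_i$ otherwise gives $\ell_c(\theta,\lambda,\hat z)\ge -\log\bigl(r\sum_i u_i\bigr)-\lambda n|\log r|=-\log\sum_i u_i+(\lambda n-1)\log r\to+\infty$ as $r\downarrow 0$, because $\lambda n-1<0$; hence $\ell_c(\theta,\lambda,\hat z)=+\infty$, which is the ``otherwise'' branch.

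For the main case $\lambda\ge 1/n$ (so $\lambda>0$), the crucial observation is that $f$ is \emph{not} concave (it is $\log t$ plus a convex function), so the usual ``critical point $=$ maximizer'' reasoning is unavailable; the remedy is that $\rho(t):=t\,f'(t)=1-m(t)\lambda-t\sum_{i\in S(t)}u_i$ is non-increasing on $(0,\infty)$. Indeed, on each interval between consecutive breakpoints $\lambda/u_i$ the sets $S(t)$ and the count $m(t)$ are constant, so $\rho$ is affine with non-positive slope $-\sum_{i\in S(t)}u_i$; and when $t$ crosses a breakpoint at which an index $j$ leaves $S(t)$, the jump $-\lambda$ in the $m(t)$-term is cancelled exactly by the $+tu_j=+\lambda$ gained from removing $j$ from the sum, so $\rho$ is continuous. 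Since $\rho(0^+)=1>0$ and $\lim_{t\to\infty}\rho(t)=1-n\lambda\le 0$, any $t^\star$ with $\rho(t^\star)=0$, equivalently $f'(t^\star)=0$, has $f'\ge 0$ on $(0,t^\star]$ and $f'\le 0$ on $[t^\star,\infty)$, hence maximizes $f$ globally (and $f(t^\star)<\infty$).

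It then remains to check that the sorted water-filling rule produces such a point. Let $r=d-n$ be the number of indices with $\theta_i=0$, so in ascending order $u_{\sigma(1)}=\cdots=u_{\sigma(r)}=0<u_{\sigma(r+1)}\le\cdots$. With $\psi(i):=(1-(d-i)\lambda)u_{\sigma(i)}-\lambda\sum_{j\in[i]}u_{\sigma(j)}$ one computes $\psi(r+1)=(1-n\lambda)u_{\sigma(r+1)}\le 0$, so the index set defining $k$ is non-empty and $k\ge r+1$, whence $\sum_{i\in[k]}u_{\sigma(i)}>0$. If $k<d$, maximality forces $\psi(k+1)>0$; using the identity $\psi(k+1)=(1-(d-k)\lambda)u_{\sigma(k+1)}-\lambda\sum_{j\in[k]}u_{\sigma(j)}$, the assumption $1-(d-k)\lambda\le 0$ would make $\psi(k+1)\le 0$, a contradiction, so $1-(d-k)\lambda>0$ (the case $k=d$ being trivial) and $t_k:=(1-(d-k)\lambda)/\sum_{i\in[k]}u_{\sigma(i)}>0$. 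Dividing $\psi(k)\le 0<\psi(k+1)$ by $\sum_{i\in[k]}u_{\sigma(i)}$ and using monotonicity of $\sigma$ yields $S(t_k)=\{\sigma(1),\dots,\sigma(k)\}$ and $m(t_k)=d-k$, so $f'(t_k)=\tfrac{1-(d-k)\lambda}{t_k}-\sum_{i\in[k]}u_{\sigma(i)}=0$ by the definition of $t_k$; by the previous paragraph $t_k$ maximizes $f$, hence $\gamma^\star=-t_k=((d-k)\lambda-1)/\sum_{i\in[k]}u_{\sigma(i)}$ solves \eqref{eq:robust:portfolio}. The main obstacle is precisely the non-obvious monotonicity of $t\,f'(t)$ that substitutes for concavity; the rest is careful but routine bookkeeping around the zero components of $\theta$ and the boundary index $k$.
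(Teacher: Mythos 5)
Your proof is correct, and in the decisive analytic step it takes a different---and in fact more careful---route than the paper. The paper also reduces everything to the first-order condition, but it justifies ``critical point $=$ unique maximizer'' by asserting that the objective of \eqref{eq:robust:portfolio} is strictly concave in $\gamma$. That assertion is not literally true: the objective is $1+\log(-\gamma)$ plus the convex function $\sum_{i}\lambda h(\gamma\theta_i\hat z_i/\lambda)$, and on the region where all $n=\|\theta\|_0$ positive coordinates lie in the logarithmic branch of $h$ its second derivative equals $(n\lambda-1)/\gamma^2$, which is strictly positive whenever $\lambda>1/n$; so the objective is strictly \emph{convex} for $\gamma$ sufficiently negative. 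Your observation that $\rho(t)=t\,f'(t)$ is continuous and non-increasing (the $-\lambda$ jump in the $m(t)$ term cancelling against the $+tu_j=\lambda$ gained at each breakpoint) supplies exactly the unimodality that the concavity claim was meant to deliver, and it does so uniformly for $\lambda\ge 1/\|\theta\|_0$, so the boundary case $\lambda=1/\|\theta\|_0$ that the paper defers with ``details are omitted'' is covered for free. The combinatorial verification that $S(t_k)=\{\sigma(1),\dots,\sigma(k)\}$ is equivalent in content to the paper's induction on $\cD(\gamma^\star)$, but your bound $k\ge r+1$---which guarantees $\sum_{i\in[k]}u_{\sigma(i)}>0$, hence $t_k>0$ and the feasibility $\gamma^\star<0$---handles the zero components of $\theta$ explicitly, whereas the paper only establishes $k\ge 1$. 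Finally, dispatching the case $\lambda<1/\|\theta\|_0$ from the primal formula \eqref{eq:robust:loss} with the scaling $z_i=r\hat z_i$ is a legitimate alternative to the paper's computation of the $\gamma\to-\infty$ limit in the dual, with the small advantage of covering $\lambda=0$ without invoking the perspective convention.
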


By Proposition~\ref{proposition:strong:duality}, the distributionally robust portfolio selection problem~\eqref{eq:dro:portfolio} is equivalent to the convex stochastic optimization problem $\inf_{\theta\in\Theta,\,\lambda \geq 0} \E_{\hat Z \sim \hat{\P}} [ f(\theta, \lambda, \hat Z) ]$ with $f(\theta, \lambda,\hat z)=\lambda\eps +\ell_c(\theta, \lambda, \hat z)$, and Proposition~\ref{proposition:portfolio} implies that $f(\theta, \lambda, \hat z)=+\infty$ unless $\lambda\geq1/\|\theta\|_0$, which we may thus impose as an explicit constraint. Proposition~\ref{proposition:portfolio} further enables us to solve the equivalent stochastic program by ordinary or stochastic gradient descent. Indeed, Proposition~\ref{proposition:portfolio} implies via the envelope theorem~\citep[Theorem~2.16]{de2000mathematical} that if $\lambda \geq 1/\|\theta\|_0$, then the gradients of $f$ with respect to $\theta$ and $\lambda$ are given by 
\begin{align*}
    \nabla_{\theta} f (\theta, \lambda, \hat Z) &= \left( \gamma^\star \hat Z_1 \, h'(\gamma^\star \theta_1 \hat Z_1/\lambda),\ldots, \gamma^\star \hat Z_d \, h'(\gamma^\star \theta_d \hat Z_d/\lambda)\right)\\
    \nabla_{\lambda} f(\theta, \lambda, \hat Z) & = \eps + \sum_{i \in [d]} h(\gamma^\star \theta_i \hat Z_i/\lambda) - \frac{1}{\lambda} \sum_{i \in [d]} \gamma^\star \theta_i \hat Z_i \, h'(\gamma^\star \theta_i \hat Z_i/\lambda),
\end{align*}
where $\gamma^\star$ is defined in as Proposition~\ref{proposition:portfolio}, and $h'(s)= -1 / s$ if $s < -1$; $=1$ if $-1\leq s< 0$. Under a mild local uniform integrability condition, these random gradients represent unbiased estimators for the deterministic gradients $\nabla_\theta \E_{\hat Z \sim \hat{\P}} [ f(\theta, \lambda, \hat Z) ]$ and $\nabla_\lambda \E_{\hat Z \sim \hat{\P}} [ f(\theta, \lambda, \hat Z) ]$, respectively. In principle, these estimators can thus be used in stochastic gradient descent algorithms. However, they may fail to be Lipschitz continuous in $(\theta,\lambda)$ unless the support of~$\hat \P$ is a compact subset of~$\R_{++}^d$. Hence, stochastic gradient descent may fail to converge. Even if~$\hat \P$ is discrete, the Lipschitz modulus of the gradient estimators depends on the atoms of~$\hat \P$ and may thus become arbitrarily large, in which case classical as well as stochastic gradient descent suffer from numerical instability and poor convergence rates. We therefore suggest to solve the equivalent stochastic program with the adaptive Golden Ratio Algorithm introduced in~\citep[\textsection~2]{malitsky2020golden}. If~$\hat \P$ is discrete, this algorithm finds a $\delta$-suboptimal solution in~$\cO(1 / \delta)$ iterations. 

In the last experiment we assess the out-of-sample performance of various log-optimal portfolios with $d=10$ assets. To this end, we assume that the unknown true asset return distribution~$\P$ is lognormal and that its mean and covariance matrix coincide with the empirical mean and the Ledoit-Wolf covariance shrinkage estimator~\cite{ledoit2004well} corresponding to the 600 most recent monthly returns in the `$10$~Industry Portfolios' dataset from the Fama-French online data library.\footnote{\url{http://mba.tuck.dartmouth.edu/pages/faculty/ken.french/data_library.html}} Our experiment consists of $1{,}000$ independent trials. In each trial we construct~$\hat \P$ as the empirical distribution on $100$ training samples from~$\P$. Different distributionally robust log-optimal portfolios are then obtained by solving the stochastic programming reformulation of~\eqref{eq:dro:portfolio} with the adaptive Golden Ratio Algorithm for different values of $\eps \in \{ a \cdot 10^b: a \in \{1, \dots, 10 \},\; b \in \{-4, \cdots, -1 \} \}$. The sample average approximation (SAA) portfolio is obtained similarly by setting~$\eps=0$. Finally, the out-of-sample performance~$\E_{Z\sim\P}[-\log(\inner{\theta}{Z}]$ of any fixed portfolio~$\theta\in\Theta$ is evaluated empirically by using $10^6$~test samples from~$\P$. Figure~\ref{figure:portfolio}\,(a) plots the out-of-sample performance of the distributionally robust portfolios as a function of~$\eps$ averaged across all $1{,}000$ trials. We conclude that one can indeed improve on SAA by accounting for distributional ambiguity. Figure~\ref{figure:portfolio}\,(b) visualizes the distribution of the out-of-sample performance (which reflects the uncertainty of the $100$ training samples) of the SAA portfolio and the DRO portfolio corresponding to~$\eps=10^{-2}$. The picture suggests that accounting for distributional ambiguity not only reduces the expected logarithmic disutility but also (significantly) reduces the dispersion of the disutility. 
The average (monthly) return of the SAA portfolio equals $0.85\%$, while that of the DRO portfolio corresponding to~$\eps=10^{-2}$ is given by~$0.90\%$. In addition, the average Sharpe ratio of the SAA portfolio amounts to $0.20$, while that of the DRO portfolio corresponding to~$\eps=10^{-2}$ evaluates to~$0.25$. 

\begin{figure}[t]
    \center
    \subfigure{\includegraphics[width=0.505\columnwidth]{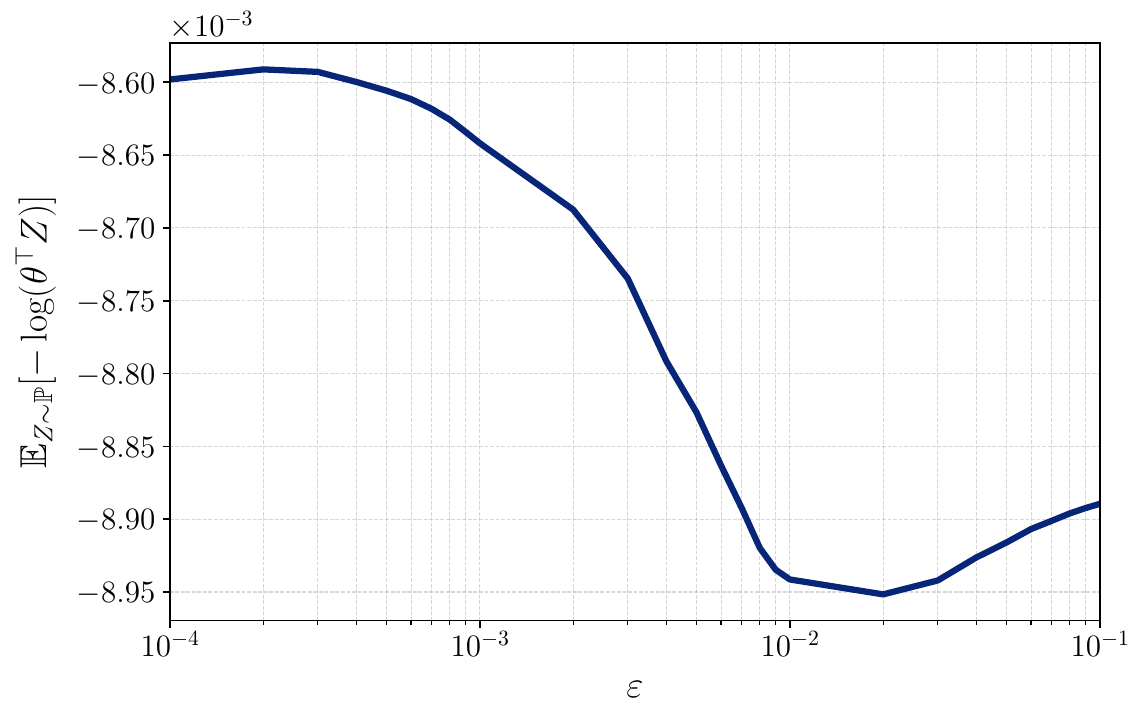}} ~
    \subfigure{\includegraphics[width=0.46\columnwidth]{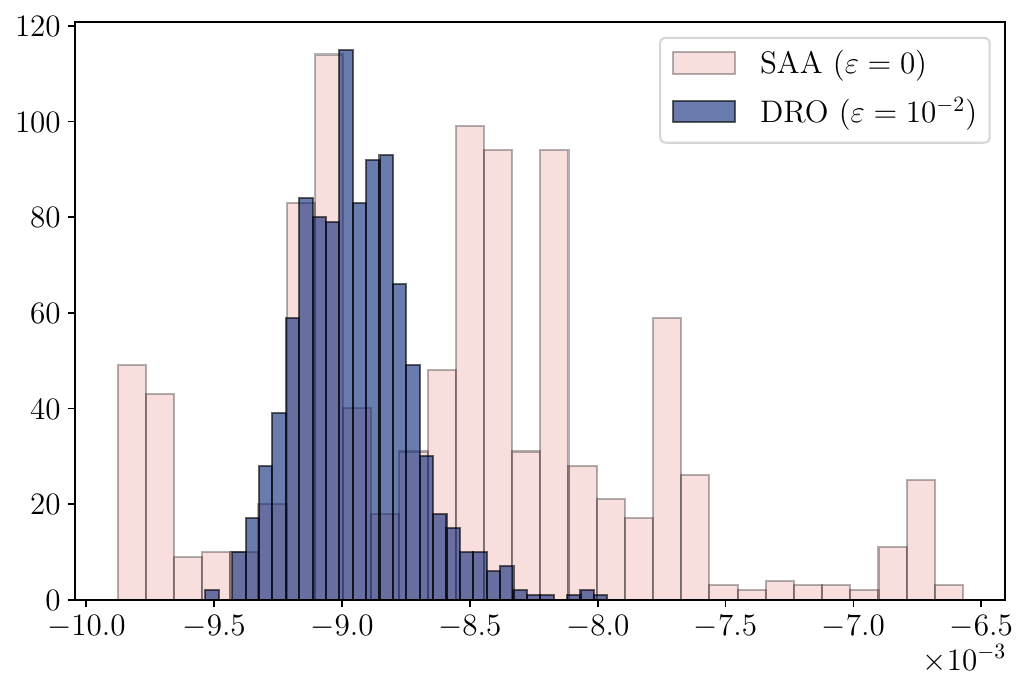}} 
    \caption{(Left) Average out-of-sample performance of the distributionally robust log-optimal portfolios as a function of $\eps$. (Right) Distribution of the out-of-sample performance for $\eps=0$ and $\eps =10^{-2}$.}
    \label{figure:portfolio}
\end{figure}

\textbf{Acknowledgements.} This research was supported by the Swiss National Science Foundation under the NCCR Automation (grant agreement~51NF40\_180545), and under Early Postdoc.Mobility Fellowships awarded to the first and second authors (grant agreements P2ELP2\_195149 and P500PT\_222215, respectively).
  \bibliographystyle{myabbrvnat} 
  \bibliography{bibfile.bib}
  
\fi

\end{document}